\newcommand{\basetheorem}[3]{
	\newtheorem{#1}{#2}[#3]
	\newtheorem*{#1*}{#2}
	\expandafter\def\csname #1autorefname\endcsname{#2}
}
\newcommand{\maketheorem}[3]{
	\newaliascnt{#1}{#3}
	\newtheorem{#1}[#1]{#2}
	\aliascntresetthe{#1}
	\expandafter\def\csname #1autorefname\endcsname{#2}
	\newtheorem{#1*}{#2}
}
\newtheorem*{rep@theorem}{\rep@title}
\newcommand{\newreptheorem}[2]{%
	\newenvironment{rep#1}[1]{%
		\def\rep@title{#2 \ref{##1}}%
		\begin{rep@theorem}}%
		{\end{rep@theorem}}}
\newcommand{\BS}{\mathbf{B}_{+}}
\newcommand{\Ht}[1]{H^{i}_{t}}
\newcommand{\ox}[1][X]{\mathcal{O}_{#1}}
\title{Mori Fibrations in Mixed Characteristic}
\author{Liam Stigant}
\address{Department of Mathematics, Imperial College London, 180 Queen's Gate, 
	London SW7 2AZ, UK} 
\email{l.stigant18@imperial.ac.uk}
\subjclass[2020]{14J30, 14E30, 14G45, 14B05}
\keywords{Sarkisov Program, Minimal Model Program, Mixed Characteristic, Mori Fibre Spaces}
\begin{document}

	\begin{abstract}
		This paper resolves several outstanding questions regarding the Minimal Model Program for klt threefolds in mixed characteristic. Namely termination for pairs which are not pseudo-effective, finiteness of minimal models and the Sarkisov Program. 
	\end{abstract}
		\maketitle
	\tableofcontents

	\section{Introduction}
	
	Recent work in \cite{bhatt2020globally+} establishes the bulk of the Minimal Model Program for KLT threefold pairs over suitable bases in base in mixed and positive characteristic. In particular it is shown that one can always run an MMP with scaling. When the pair is pseudo-effective more is known, it is shown that in fact every MMP terminates without the need for scaling. A small adaptation of the arguments of \cite{kawamata2008flops} ensures these models are connected by flops. This paper focuses on the outstanding questions in this setting for pairs which are not pseudo-effective. The main restrictions are that the residue fields of $R$ should have characteristic $p=0$ or $p>5$. A full characterisation of suitable base rings is given in \autoref{setup}.
		
	First it is shown that in fact the threefold MMP over a positive dimensional base always terminates, extending the termination result of \cite{bhatt2020globally+} to pairs which are not pseudo-effective.
	
	\begin{theorem}[\autoref{termination}]
		Let $X$ be an integral, normal threefold over $R$ equipped with a projective morphism $X \to T$, where $T$ is quasi-projective over $R$.
		If $(X,\Delta)$ is a threefold dlt pair over $R$ and the image of $X$ in $T$ is positive dimensional then any $K_{X}+\Delta$ MMP terminates.
	\end{theorem}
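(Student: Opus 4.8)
The plan is to reduce the question to termination over the image $Z$ of $X$ in $T$, where the fibres of $X\to Z$ have dimension at most two, and then to leverage the known termination statements in dimension $\le 2$ and in the pseudo-effective case by Noetherian induction on $Z$.

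First I would perform the standard reductions. Each divisorial contraction drops the relative Picard number $\rho(X_i/T)$ by one and a Mori fibre contraction terminates the MMP, so after finitely many steps the MMP consists only of flips; suppose for contradiction that there are infinitely many. Since the MMP terminates in dimension $\le 2$ (where there are no flips), special termination applies, and after finitely many further flips every flipping and flipped locus is disjoint from $\lfloor\Delta\rfloor$; so we may assume $(X,\Delta)$ is klt. Finally, replacing $T$ by the normalised image $Z$ of $X\to T$ via Stein factorisation — a closed immersion $Z\hookrightarrow T$, so an MMP over $T$ is an MMP over $Z$ — we may assume $X\to Z$ is surjective with connected fibres and $\dim Z\ge 1$.

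Now distinguish two cases. If $K_X+\Delta$ is pseudo-effective over $Z$, the relative pseudo-effective termination theorem of \cite{bhatt2020globally+} applies and we are done. If it is not, the basic observation is that every flipping curve is contracted over $Z$ and no curve can dominate the positive-dimensional $Z$ while being contracted over it; hence all flipping and flipped curves lie in fibres of $X\to Z$. In particular the generic fibre of $X\to Z$ over the generic point of each component is a klt pair of dimension $\le 2$ and is left unchanged by the entire sequence. The heart of the argument is then to show that, after finitely many further steps, all flipping loci lie over one \emph{proper} closed subset $V\subsetneq Z$; granting this, I would induct on $\dim Z$. If $\dim V\ge 1$, one restricts the tail of the MMP to a suitable model over $V$ — via adjunction, after adding the preimage of $V$ to the boundary — to get an infinite sequence of flips over a base of smaller dimension, contradicting the inductive hypothesis. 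If $\dim V=0$, localise at a point $v\in V$: we obtain a threefold over the trait $\operatorname{Spec}\mathcal O_{Z,v}$ all of whose flips are supported on the central fibre $X_0$. As $X_0\sim 0$ over the trait, each $(K_X+\Delta)$-flip is also a $(K_X+\Delta+X_0)$-flip; after a dlt modification making $(X,\Delta+X_0)$ dlt with $X_0$ a component of its reduced boundary, special termination forces the flips off $X_0$ after finitely many steps, contradicting that they are all contained in $X_0$.

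The main obstacle is the confinement step: a priori the centres of successive flips could be Zariski-dense in $Z$, and excluding this amounts to a form of termination that is uniform across the family $X\to Z$. The mechanism I would use is that the generic fibre of $X\to Z$ is fixed throughout the sequence, so the divisorial valuations that successive flips can affect are constrained by the finitely many relevant valuations on that fibre; turning this into the stated confinement, while keeping track of the mixed-characteristic dlt modifications and checking that the restricted sequences really are genuine flips of the lower-dimensional pairs, is where the real work lies.
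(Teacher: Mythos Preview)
Your overall architecture is right: reduce to klt via special termination, confine the flipping loci over a proper closed subset of the base, then add the preimage of that subset to the boundary and invoke special termination again. This is exactly the paper's strategy. However, there are two issues.

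First, the confinement step is the whole point, and you have not proved it. Saying that the generic fibre is unchanged and that ``the divisorial valuations that successive flips can affect are constrained by the finitely many relevant valuations on that fibre'' is not an argument: the successive flips happen on the threefold, not on the generic fibre, and there is no obvious way to bound the number of vertical flips from information on the horizontal part alone. The paper handles this by first terminalising and then running a Shokurov-type difficulty count: for a terminal threefold the number $d(X)=\#\{E : a(E,X)<1\}$ is finite and strictly drops under each flip, so over any fixed open set of the base there can be only finitely many flips. An induction on the number of components of $\Delta$ then upgrades this to terminal pairs, and pushing forward through the terminalisation gives the klt statement: there is an open $U\subseteq T$ over which every $(K_X+\Delta)$-negative contraction is horizontal and divisorial. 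Hence all flips lie over a fixed divisor $D\subset T$. This is the content you are missing.

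Second, your Noetherian induction on $\dim Z$ is both unnecessary and problematic. Restricting to $X\times_Z V$ drops the dimension of the total space, so you are no longer in the threefold situation and the inductive hypothesis does not apply as stated; and it is not clear that the restricted maps are genuine flips for any pair. The paper avoids this entirely: once all flips lie over a divisor $D\subset T$, one simply takes a $\mathbb{Q}$-Cartier (or numerically trivial, via the Mumford pullback when $\dim T=2$) divisor $D'$ on $X$ supported over $D$, sets $\Delta'=\Delta+tD'$ for small $t>0$, and observes that the same sequence is a $(K_X+\Delta')$-MMP whose flipping loci are contained in $\operatorname{Supp}\Delta'$; \autoref{partial-termination} then terminates it directly. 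Your $\dim V=0$ case already does exactly this, and the same trick works verbatim when $\dim V\ge 1$.
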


	An MMP for a pair which is not pseudo-effective will always terminate with a Mori fibre space. Unlike minimal models these are not connected by flops or even isomorphic in codimension $1$. They can be very varied even in dimension $2$. Nonetheless they are conjecturally related by a sequence of elementary transformations called Sarkisov Links. This claim is known as the Sarkisov program. It is shown that any two threefold Mori fibres spaces which are the output of the same MMP are related by Sarkisov links.
	
	\begin{theorem}[\autoref{sarkisov}]
		Fix an integral quasi-projective scheme $T$ over $R$. Let $g_{1}:Y_{1} \to Z_{1}$ and $g_{2}:Y_{2} \to Z_{2}$ be two Sarkisov related, klt Mori fibre spaces of dimension $3$, projective $T$. If the $Y_{i}$ have positive dimension image in $T$, then they are connected by Sarkisov links.
	\end{theorem}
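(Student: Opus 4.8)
The strategy is to adapt the approach of Hacon--McKernan to the Sarkisov program via finiteness of minimal models to the present mixed characteristic setting, with the classical inputs replaced by the minimal model program with scaling and the cone and contraction theorems of \cite{bhatt2020globally+}, the finiteness of minimal models established above, and the termination statement \autoref{termination}. First I would unwind the hypothesis that $g_1$ and $g_2$ are Sarkisov related: this should provide a common $\mathbb{Q}$-factorial klt pair $(X,\Delta)$, projective over $T$, with $K_X+\Delta$ not pseudo-effective over $T$ and with positive dimensional image in $T$, together with birational contractions from $X$ to each of $Y_1$ and $Y_2$ realising $g_i\colon Y_i\to Z_i$ as an output of a $(K_X+\Delta)$-MMP over $T$. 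Fix ample $\mathbb{Q}$-divisors $A_i$ on $Y_i$ and let $\Theta_i$ be their strict transforms on $X$. Since $K_X+\Delta$ is not pseudo-effective over $T$, running the $(K_X+\Delta)$-MMP with scaling of a general ample ends with a Mori fibre space, and each $g_i$ is produced in this manner for a suitable choice of scaling class.

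Next I would fix a rational polytope $\mathcal{P}$ in the finite dimensional space of $\mathbb{R}$-divisors on $X$ spanned by the components of $\Delta$, of an auxiliary ample divisor, and of the $\Theta_i$, chosen large enough to contain $\Delta$ together with the classes governing the outputs $Y_1$ and $Y_2$. Applying the finiteness of minimal models to $(X,\Delta)$ and $\mathcal{P}$, the set of classes in $\mathcal{P}$ that are pseudo-effective over $T$ should break into finitely many rational polytopal chambers on which the ample model over $T$ is constant; across the pseudo-effective boundary, the outputs of the corresponding MMPs with scaling likewise fall into finitely many chambers, each of which determines a Mori fibre space. In particular $g_1$ and $g_2$ are represented by chambers $\mathcal{C}_1$ and $\mathcal{C}_2$ of this decomposition.

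I would then join $\mathcal{C}_1$ to $\mathcal{C}_2$ by a path inside $\mathcal{P}$ crossing the walls of the decomposition one at a time and missing all strata of codimension at least two. Every crossing of a wall separating two Mori fibre chambers is identified with a Sarkisov link by means of the relative two-ray game: the wall is a codimension one face, so the $\mathbb{Q}$-factorial model attached to it carries exactly two extremal contractions, and running the two MMPs associated to these rays yields the diagram of a Sarkisov link of one of the types I--IV, the type being determined by whether the contractions involved are divisorial, small, or of fibre type. These MMPs terminate by \autoref{termination}, using that the image in $T$ is positive dimensional, so concatenating the links met along the path exhibits the birational map between $Y_1$ and $Y_2$ as a sequence of Sarkisov links.

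I expect the main obstacle to be the last step, namely certifying that each wall-crossing is a genuine Sarkisov link. This requires: (a) that all contractions occurring in the two-ray game exist, which is supplied by the cone and contraction theorems of \cite{bhatt2020globally+} for threefolds over a base as in \autoref{setup} with $p=0$ or $p>5$; (b) that the two-ray games terminate, which is exactly where \autoref{termination} and the hypothesis that the image in $T$ is positive dimensional enter; and (c) that the resulting fibrations have relative Picard rank one where needed, which follows from working along a codimension one wall. One must also take care that several vanishing theorems underlying the classical arguments are unavailable in mixed characteristic; this is circumvented by using only the minimal model program machinery of \cite{bhatt2020globally+}, together with the finiteness and termination results of this paper, as black boxes.
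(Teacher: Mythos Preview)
Your outline is essentially the Hacon--McKernan strategy that the paper follows, so the broad shape is correct. Two points of comparison are worth flagging.

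First, rather than working in a large polytope and choosing a path that avoids codimension-two strata, the paper constructs from the outset a \emph{two-dimensional} geography tailored to the given Mori fibre spaces (\autoref{geo}): one pulls back ample divisors from the bases $Z_i$ (not from $Y_i$ as you suggest), adds a small ample $H$ whose components span $NS(X)$, and takes the convex hull. The wall-crossing analysis then happens along the one-dimensional boundary of $\mathcal{E}(C)$, and each ridge yields a link via \autoref{islinked} and \autoref{links}. This is equivalent in spirit to your general-position path but is cleaner to set up and avoids separate genericity arguments.

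Second, and more substantively, you do not address the Bertini issue. In mixed characteristic one cannot replace a big or ample class by an effective divisor keeping the pair klt globally over $T$; this is precisely why the paper develops the \emph{rlt} framework in \autoref{rlt-section}. The construction of the geography in \autoref{geo} uses \autoref{bertini} to produce rlt (not klt) boundaries, and the intermediate pairs appearing in the Sarkisov links may only be rlt even when the input is klt. Your sentence about circumventing unavailable vanishing theorems by treating the MMP as a black box does not cover this: the gap is in building the polytope with the right singularity class, not in running the MMP. Without the rlt substitute for Bertini, your step ``fix a rational polytope $\mathcal{P}$ \ldots\ large enough'' is where the argument would actually break.
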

	
	The proof of this second theorem follows closely the work of \cite{hacon2009sarkisov}. The main technical work comes in proving a suitable version of finiteness of minimal models.
	
	\begin{theorem}[\autoref{rltfiniteness}]
		Let $X$ be an integral, normal threefold over $R$ equipped with a projective morphism $X \to T$, where $T$ is quasi-projective over $R$ and the image of $X$ in $T$ is positive dimensional. Let $A$ be an ample $\mathbb{Q}$-Cartier divisor and $C$ be a rational polytope inside $\mathcal{L}_{A}(V)$. Suppose there is a boundary $A+B \in \mathcal{L}_{A}(V)$ such that $(X,A+B)/T$ is a klt $R$-pair. Then the following hold:
	\begin{enumerate}
	\item There are finitely many birational contractions $\phi_{i}:X \dashrightarrow Y_{i}$ such that 
	\[\mathcal{E}(C) = \bigcup \mathcal{W}_{i}=\mathcal{W}_{\phi_{i}}(C)\]
	where each $\mathcal{W}_{i}$ is a rational polytope. Moreover if $\phi:X \to Y$ is a wlc model for any choice of $\Delta \in \mathcal{E}(C)$ then $\phi=\phi_{i}$ for some $i$, up to composition with an isomorphism.
	
	\item There are finitely many rational maps $\psi_{j}:X \dashrightarrow Z_{j}$ which partition $\mathcal{E}(C)$ into subsets $\mathcal{A}_{\psi_{j}}(C)=\mathcal{A}_{i}$.
	\item  For each $W_{i}$ there is a $j$ such that we can find a morphism $f_{i,j}: Y_{i} \to Z_{j}$ and $W_{i} \subseteq \overline{A_{j}}$.
	\item  $\mathcal{E}(C)$ is a rational polytope and $A_{j}$ is a union of the interiors of finitely many rational polytopes.
	\end{enumerate}
	If $C$ contains only klt boundaries then $A$ big suffices.
	\end{theorem}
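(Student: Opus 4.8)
The plan is to run the ``geography of log models'' machinery of \cite{hacon2009sarkisov} (going back to Birkar--Cascini--Hacon--McKernan) in the present setting, the one genuinely new ingredient being the unconditional termination statement \autoref{termination}, which replaces their termination hypotheses and is available here precisely because the image of $X$ in $T$ is positive dimensional. The remaining inputs --- the cone and contraction theorems, existence of flips, and the base-point-free theorem for klt threefold pairs over the bases of \autoref{setup} --- are supplied by \cite{bhatt2020globally+}. Observe first that $\mathcal{E}(C)=\{\Delta\in C: K_X+\Delta\ \text{is pseudo-effective over}\ T\}$: any such pair admits a minimal model by running a terminating MMP (\autoref{termination}), while conversely a wlc model forces pseudo-effectivity.

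First I would reduce to the case $A$ ample: if $C$ contains only klt boundaries but $A$ is merely big, Kodaira's lemma lets us replace a multiple of $A$ by $A'+E$ with $A'$ ample $\mathbb{Q}$-Cartier and $E\ge 0$, enlarge $V$ to contain $E$, and absorb $E$ into the boundary, klt-ness persisting on a small enough sub-polytope and wlc and ample models being unchanged. With $A$ ample, by compactness of $\mathcal{L}_{A}(V)$ it suffices to prove a local statement near each $\Delta_0$; boundary faces are dealt with by induction on $\dim V$. Using the ample summand $A$ to perturb, we may assume $\Delta_0$ is a $\mathbb{Q}$-divisor with $(X,\Delta_0)$ klt. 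If $K_X+\Delta_0$ is not pseudo-effective over $T$, the same holds nearby, so $\Delta_0\notin\mathcal{E}(C)$ and there is nothing to prove; otherwise run a $(K_X+\Delta_0)$-MMP over $T$ with scaling of a fixed ample divisor --- terminating by \autoref{termination} --- to obtain a minimal model $\phi_0\colon X\dashrightarrow Y_0$.

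Next I would propagate $\phi_0$ to a neighbourhood of $\Delta_0$. Writing $\Delta_{Y_0}=(\phi_0)_*\Delta$, the conditions that $\phi_0$ be a wlc model of $(X,\Delta)$ --- $\Delta_{Y_0}\ge 0$, $K_{Y_0}+\Delta_{Y_0}$ nef over $T$, and the discrepancy inequalities --- are linear or polyhedral in $\Delta$ (the nef condition via the relative cone theorem on $Y_0$, the discrepancies because they are affine in $\Delta$), so $\mathcal{W}_{\phi_0}(C)$ is a rational polytope. To cover a neighbourhood of $\Delta_0$ by finitely many such polytopes, run for $\Delta$ in a small rational box about $\Delta_0$ a $(K_{Y_0}+\Delta_{Y_0})$-MMP over $T$ with scaling of a small ample divisor: each terminates by \autoref{termination}, and only finitely many extremal rays --- those negative against a fixed class near the nef class $K_{Y_0}+(\phi_0)_*\Delta_0$ --- can ever be contracted, so only finitely many minimal models appear. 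Compactness then yields $\mathcal{E}(C)=\bigcup\mathcal{W}_i$ with $\mathcal{W}_i=\mathcal{W}_{\phi_i}(C)$ rational polytopes; the uniqueness clause of (1) is automatic since a wlc model is determined up to isomorphism by the pair. As the pseudo-effective locus is convex, $\mathcal{E}(C)$ is a convex finite union of rational polytopes, hence a rational polytope, giving (1) and the convexity in (4). For (2)--(4): for $\Delta\in\mathcal{W}_i$ the divisor $K_{Y_i}+\Delta_{Y_i}$ is nef over $T$, hence semiample by the base-point-free theorem and defines the ample model $f_i\colon Y_i\to Z$; which contraction occurs is detected by finitely many intersection numbers on $Y_i$, affine in $\Delta$, so $\mathcal{W}_i$ subdivides into finitely many locally closed rational polyhedral pieces on each of which the ample model, and hence the composite $\psi\colon X\dashrightarrow Z$, is constant. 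Collecting these over all $i$ gives the finite list $\psi_j\colon X\dashrightarrow Z_j$ and the partition $\mathcal{A}_{\psi_j}(C)=\mathcal{A}_j$; comparing the two decompositions produces the factorisations $f_{i,j}\colon Y_i\to Z_j$ with $\mathcal{W}_i\subseteq\overline{\mathcal{A}_j}$ and the description of each $\mathcal{A}_j$ as a union of interiors of finitely many rational polytopes, completing (2)--(4).

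The main obstacle is making \autoref{termination} do exactly the work its characteristic-zero analogue does in this machinery. Three points need care: (i) termination must be invoked not only for $(X,\Delta)$ but for all the klt pairs $(Y_i,\Delta_{Y_i})/T$ and for the scaled MMPs crossing between chambers --- this is why \autoref{termination} is stated for arbitrary dlt pairs with positive-dimensional image in $T$, a property preserved by the contractions and flips in play, since each $Y_i\to T$ stays projective with the same image; (ii) along $\partial\mathcal{E}(C)$ the scaled MMP degenerates into a Mori fibre space, and one must check that the chambers $\mathcal{W}_i$ close up correctly there rather than this breaking the local finiteness argument; and (iii) base-point-freeness and finite generation producing the ample models must be used in their relative, mixed-characteristic form, including checking that the $Z_j$, a priori only algebraic spaces over $T$, are genuinely projective, exactly as in the char-$0$ treatment of \cite{hacon2009sarkisov}. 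The final $A$-big refinement is the usual perturbation trick and costs only bookkeeping.
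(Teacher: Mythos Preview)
Your overall strategy is the standard BCHM local-finiteness argument and is close in spirit to what the paper ultimately relies on (via \cite[Theorem 9.33]{bhatt2020globally+}), but there is a genuine gap in your treatment of the ``Moreover'' clause of (1).

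You write that ``the uniqueness clause of (1) is automatic since a wlc model is determined up to isomorphism by the pair.'' This is false: weak log canonical models are \emph{not} unique up to isomorphism. Already log terminal models of a fixed klt pair are only unique up to flops, and wlc models are a strictly larger class (for instance the log canonical model, or any intermediate contraction, is a wlc model). Your construction produces a finite list $\phi_i$ by running MMPs, but nothing you have said shows that an \emph{arbitrary} wlc model $\phi\colon X\dashrightarrow Y$ of some $\Delta\in\mathcal{E}(C)$ must coincide with one of the $\phi_i$. This is precisely the content that the paper isolates in \autoref{klt_finiteness} and \autoref{weak finiteness}: one arranges (after passing to a log resolution and adding small ample divisors) that the components of every boundary span $NS(X)$; then, given any wlc model $\phi$, one perturbs $\Delta$ inside the polytope so that $\phi_*\Delta$ becomes ample, forcing $\phi$ to be the \emph{ample} model of the perturbed boundary, which is unique and hence appears in the finite list. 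Without this step your argument bounds the number of minimal models but not the number of wlc models.

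Structurally the paper's route also differs from yours: rather than proving local finiteness of minimal models from scratch, it quotes \cite[Theorem 9.33]{bhatt2020globally+} (which already contains the scaled-MMP-with-few-rays argument you sketch), then spends its effort on the wlc upgrade above and on the passage from klt to rlt polytopes via \autoref{rlt-repl} and an open-cover argument. Termination (\autoref{termination}) enters only to guarantee that \emph{every} $\Delta\in\mathcal{E}(C)$ actually has a log terminal model, i.e.\ that the $\mathcal{W}_i$ cover $\mathcal{E}(C)$; it is not used to control the number of models. Your proposal front-loads termination as the engine of finiteness, which is not how the argument is organised and is not by itself sufficient without the $NS(X)$-spanning perturbation trick.
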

	
	In fact these results hold for a slightly more general class of singularities - rlt pairs, which are essentially pairs which are replaceable by linearly equivalent klt pairs locally over the base. This generalisation is necessary due to the lack of appropriate Bertini type theorems over a general ring. Even if one starts with Mori Fibre Spaces coming from a klt MMP, the Sarkisov links may involve rlt pairs. A full definition of rlt is given in \autoref{rlt-section} and a description of Sarkisov links in \autoref{Sarkisov-section}.
	
	The requirement that the base be positive dimensional is mostly out of an abundance of caution. It is likely the results of \cite{DW19} are sufficient to carry out the arguments needed for \autoref{rltfiniteness} over an $F$-finite field. Over a perfect field of positive characteristic this theorem is already known due to \cite{das2020finiteness} and in characteristic $0$ due to \cite{shokurov2011geography}. The result is also known in higher dimensions over characteristic $0$ fields by \cite{birkar2010existence}.
	
	\textbf{Acknowledgments}
	Thanks to Federico Bongiorno and Paolo Cascini for their support and for many useful discussions. Thanks also to the EPSRC for my funding.	
	
	\section{Preliminaries}
	
	We will say that $f\colon X \to Y$ is a contraction if $f_{*}\ox=\ox[Y]$.
	
	\begin{definition}\label{log-pair}
		
		A sub-log pair $(X,\Delta)$  with $\mathbb{K}$ boundary is an excellent, Noetherian, integral, normal scheme $X$ admitting a dualising complex together with an $\mathbb{K}$-divisor $\Delta$ such that $(K_{X}+\Delta)$ is $\mathbb{K}$-Cartier. If $\Delta$ is effective, we say $(X,\Delta)$ is a log pair. When $\Delta=0$ we just say $X$ is a log pair.
		
	\end{definition}
	
	We adopt the notation and definitions of \cite{KM}[Section 2.3, Definition 2.34] for singularities of pairs. In particular for $E$ with centre on $X$ we denote the discrepancy by $a(E,X,\Delta)$. If $\Delta=0$ we write $a(E,X)$ for brevity.
	
	\begin{definition}\label{setup}
		An $R$-pair $(X,\Delta)/ T$ with $\mathbb{K}$-boundary will be the following data:
		\begin{itemize}
			\item An excellent, normal ring $R$ of mixed characteristic and finite dimension which admits a dualising complex and whose residue fields have characteristic $p=0$ or $p>5$;
			\item A integral, normal quasi-projective $R$-variety $T$;
			\item A normal, integral scheme $X$;
			\item A projective contraction $f \colon X \to T$; and
			\item A $\mathbb{K}$-divisor $\Delta \geq 0$ with $(K_{X}+\Delta)$ $\mathbb{K}$-Cartier, for $\mathbb{K}=\mathbb{R}$ or $\mathbb{Q}$.
		\end{itemize}
		
		The dimension of such a pair is the dimension of $X$. Equally the pair is said to $\mathbb{Q}$-factorial if $X$ is. If no $\mathbb{K}$ is specified we default to $\mathbb{K}=\mathbb{R}$. As with log pairs, we drop $\Delta$ from the notation when $\Delta=0$.
		
		In this paper we consider exclusively $T$ with $\dim T \geq 1$.
	\end{definition}

	The assumptions that $X \to T$ is a contraction and $T$ is integral, normal can be dropped in applications since we can always Stein factorise an arbitrary projective $X \to T$. In this case it is necessary to assume the image of $X$ has dimension at least $1$ in place of $T$.
%
%
%
%
%

	\begin{theorem}[Cone Theorem]\label{cone-thm}
		Let $(X,\Delta)/T$ be an lc $\mathbb{Q}$-factorial threefold $R$-pair. Then there is a countable collection of curves $\{C_{i}\}$ on $X$ such that:
		\begin{enumerate}
			\item $$\overline{NE}(X/T)=\overline{NE}(X/T)_{K_{Y}+\Delta \geq 0} + \sum_{i} \mathbb{R}[C_{i}]$$
			\item The rays $C_{i}$ do not accumulate in $(K_{Y}+\Delta)_{<0}$.
			\item For each $i$ there is $d_{C_{i}}$ with 
			\[0 < -(K_{X}+\Delta).C_{i} \leq 4d_{c_{i}}\]
			and $d_{C_{i}}$ divides $L\cdot_{k}C_{i}$ for every Cartier divisor $L$ on $X$.
		\end{enumerate}
	\end{theorem}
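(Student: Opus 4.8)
The plan is to bootstrap from the cone theorem for klt pairs, which in this setting is essentially contained in \cite{bhatt2020globally+}, and to reduce the lc statement to it by a dlt-modification and perturbation argument; the length estimate (3) will be extracted separately from bend and break. It is convenient to observe first that every class in $\overline{NE}(X/T)$ is represented by curves contracted by $f$, hence lying in a fibre $X_{t}$ over a (not necessarily closed) point $t$ of $T$. Since $\dim T \ge 1$ and $f$ is a contraction these fibres have dimension at most $2$, and the residue field $k = k(t)$ has characteristic $0$ or $> 5$; this is the only point at which the hypotheses on $R$ and on $\dim T$ intervene.

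\textbf{Reduction to the klt case.} Take a $\mathbb{Q}$-factorial dlt modification $g\colon (X',\Delta') \to (X,\Delta)$ with $K_{X'} + \Delta' = g^{*}(K_{X}+\Delta)$; its existence follows from the MMP established in \cite{bhatt2020globally+}. The pushforward $g_{*}\colon \overline{NE}(X'/T) \to \overline{NE}(X/T)$ is surjective and, by the projection formula, intertwines the functionals $(K_{X'}+\Delta')\cdot(-)$ and $(K_{X}+\Delta)\cdot(-)$; it therefore carries a cone decomposition of the stated shape for $(X',\Delta')$ to one for $(X,\Delta)$, so we may assume $(X,\Delta)$ is $\mathbb{Q}$-factorial dlt. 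For such a pair $(X,(1-\epsilon)\Delta)$ is klt for every $0<\epsilon<1$, since all log discrepancies of $(X,(1-\epsilon)\Delta)$ are then strictly positive. Applying the klt cone theorem of \cite{bhatt2020globally+} to the family $(X,(1-\epsilon)\Delta)$ and letting $\epsilon \to 0$ yields (1) and (2) for every $(K_{X}+\Delta)$-negative ray $R$ with $\Delta\cdot R \ge 0$. A ray with $\Delta\cdot R < 0$ is spanned by curves contained in a prime component $S$ of $\Delta$ with $S\cdot R < 0$; restricting to the normalisation of $S$ and using adjunction reduces the analysis of such rays to the cone theorem for a dlt surface pair, which holds over an arbitrary excellent base. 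This last point is precisely the extra care already needed in the characteristic-zero lc cone theorem.

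\textbf{The length estimate.} Fix a $(K_{X}+\Delta)$-negative extremal ray $R$. Bend and break, applied on $X$ itself (the relevant deformations of a curve need only preserve its class, not the fibre it sits in, so a one-parameter family may sweep out a subvariety of dimension up to $3$), and available for threefolds over such $R$ by \cite{bhatt2020globally+} --- this is exactly the step forcing $p=0$ or $p>5$ --- produces a rational curve $C_{i}$ spanning $R$. Let $d_{C_{i}}$ be the positive generator of the subgroup $\{\, L\cdot_{k}C_{i} : L \text{ a Cartier divisor on } X\,\} \subseteq \mathbb{Z}$, so that $d_{C_{i}}$ divides $L\cdot_{k}C_{i}$ for every Cartier $L$ by construction. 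The usual dimension count --- a family of rational curves through a fixed point on a threefold degenerates once its anticanonical degree exceeds $(\dim X + 1)\,d_{C_{i}}$ --- gives $-K_{X}\cdot_{k}C_{i} \le (\dim X + 1)\,d_{C_{i}} = 4 d_{C_{i}}$. Choosing $C_{i}\not\subseteq\operatorname{Supp}\Delta$ whenever $R$ is not a boundary ray yields $0 < -(K_{X}+\Delta)\cdot_{k}C_{i} \le -K_{X}\cdot_{k}C_{i} \le 4 d_{C_{i}}$, and for the boundary rays the same bound comes from the two-dimensional analysis on $S$. Taking these $C_{i}$ as the curves in (1) finishes the argument.

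\textbf{Main obstacle.} The substantive inputs are all imported: the cone and contraction theorems, the existence of dlt modifications, and bend and break for threefolds over a mixed-characteristic base, where Kawamata--Viehweg vanishing and Bertini-type theorems are unavailable and must be replaced by the global $+$-regularity and $+$-stable-section techniques of \cite{bhatt2020globally+} --- this is why the residue characteristics of $R$ must be $0$ or greater than $5$. The only genuinely new work, and it is essentially formal, is the descent along the dlt modification together with the separate treatment of extremal rays contained in the boundary, via adjunction and the surface cone theorem.
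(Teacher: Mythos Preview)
The paper's own proof is a one-line citation: take a dlt modification (which exists by \cite[Corollary~9.19]{bhatt2020globally+}) and then apply \cite[Theorem~H]{bhatt2020globally+} directly. Theorem~H already gives the full cone theorem, including the length bound~(3), for $\mathbb{Q}$-factorial dlt threefold pairs in this setting; nothing further is needed.

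Your proposal shares the first step (dlt modification, with the same descent argument via $g_{*}$), but then does considerably more work than necessary: you further reduce dlt to klt by the perturbation $(X,(1-\epsilon)\Delta)$ together with an adjunction argument on boundary components, and you extract the length bound separately via bend and break. This is essentially the Ambro--Fujino route to the lc cone theorem, and it is a legitimate strategy, but here it amounts to re-proving the content of Theorem~H rather than citing it.

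There is also a genuine gap in your treatment of~(3). Bend and break gives a rational curve $C_{i}$ in the ray with $-K_{X}\cdot_{k}C_{i}\le 4d_{C_{i}}$, but it gives no control over whether $C_{i}\subseteq\operatorname{Supp}\Delta$. Your assertion ``choosing $C_{i}\not\subseteq\operatorname{Supp}\Delta$ whenever $R$ is not a boundary ray'' is not justified: the curve produced by bend and break may well lie in a component of $\Delta$ even when the ray is not forced into the boundary, and in that case $\Delta\cdot C_{i}$ can be negative, so the chain $-(K_{X}+\Delta)\cdot C_{i}\le -K_{X}\cdot C_{i}$ breaks down. The length bound in \cite{bhatt2020globally+} is obtained differently (via the structure of the contraction), and that is what the paper is citing. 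Similarly, ``letting $\epsilon\to 0$'' does not by itself yield the decomposition~(1) for $K_{X}+\Delta$: a class written as $\alpha_{+}^{\epsilon}+\sum c_{i}[C_{i}^{\epsilon}]$ with $(K_{X}+(1-\epsilon)\Delta)\cdot\alpha_{+}^{\epsilon}\ge 0$ need not have $(K_{X}+\Delta)\cdot\alpha_{+}^{\epsilon}\ge 0$, so some further argument (or a single well-chosen $\epsilon$ together with the boundary analysis) is required.

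In short: your dlt-modification step matches the paper, but everything after it is superfluous given Theorem~H, and your independent derivation of the length bound has a hole.
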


	\begin{proof}
		
		This follows immediately from \cite[Theorem H]{bhatt2020globally+} after taking a dlt modification, which exists by \cite[Corollary 9.19]{bhatt2020globally+}.
		
	\end{proof}
	
	\begin{theorem}[Basepoint Free Theorem]\cite[Theorem G]{bhatt2020globally+}\label{bpt}
		Let $(X,\Delta)/T$ be a klt $R$-pair of dimension $3$. Let $L$ be an $f$-nef divisor with $L-(K_{X}+\Delta)$ is $f$-big and nef. Then $L$ is semiample. 
	\end{theorem}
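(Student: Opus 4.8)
Strictly speaking the statement is quoted verbatim from \cite[Theorem G]{bhatt2020globally+}, so the ``proof'' here is just that reference; but the shape of the argument one would give is the mixed-characteristic incarnation of the Kawamata--Shokurov ``$X$-method'', which I sketch now. The plan is to show that $\ox(mL)$ is generated by global sections for all sufficiently divisible $m$; by first replacing $T$ by an affine open (which preserves excellence, finite dimension, the dualising complex and the residue-characteristic conditions) this becomes an absolute base-point-freeness question, and it suffices to prove that $\bigcap_{m}\operatorname{Bs}|mL|$ is empty together with a boundedness statement letting one choose a uniform $m$.

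The two classical inputs of the $X$-method are a \emph{non-vanishing} statement ($H^{0}(X,mL)\neq 0$ for some $m>0$) and a mechanism to \emph{lift sections} from a subscheme on which $L$ is already understood; over $\mathbb{C}$ both come from Kawamata--Viehweg vanishing, which is unavailable here. The replacement is the vanishing-and-lifting package for globally $+$-regular pairs --- the theory of $+$-stable sections $B^{0}$, built via perfectoid and big Cohen--Macaulay algebra methods --- established in \cite{bhatt2020globally+}. Concretely I would proceed as follows. (1) Since $L-(K_{X}+\Delta)$ is big and nef, Kodaira's lemma writes it numerically as (ample) $+$ (effective); perturbing $\Delta$ and running a tie-breaking step produces, for any prescribed point $x \notin \BS(L)$, a boundary $\Delta'$ with $(X,\Delta')$ being ``$+$-regular away from an isolated bad point at $x$'' and with $L-(K_{X}+\Delta')$ still ample. (2) Vanishing for the associated test/$B^{0}$ ideal then forces $H^{1}$ of the relevant twisted ideal sheaf to vanish, so sections of some $\ox(mL)$ surject onto the (zero-dimensional) bad locus, giving a section not vanishing at $x$; hence every point off $\BS(L)$ is out of the stable base locus.

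It remains to handle $\BS(L)$ itself, the locus along which $L$ fails to be big. I would do this by Noetherian induction on dimension: restrict to the normalisation of a component of $\BS(L)$, where $L$ is no longer big, use the restriction theorem for the adjoint/$B^{0}$-type ideals to transport section data, invoke the inductive hypothesis to get semiampleness there, and then lift back up via the cohomology sequences of step (2). The main obstacle is exactly this last reduction. Over a field of positive characteristic one would finish by Keel's theorem --- a nef line bundle is semiample as soon as its restriction to the non-big locus is --- but Keel's theorem genuinely fails over a mixed-characteristic base. The way around it is to argue fibrewise over $T$: one checks semiampleness on the generic fibre (where characteristic-$0$ or function-field arguments apply) and on each closed fibre (where Keel applies, in characteristic $p$), and glues. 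This fibrewise strategy is precisely what makes the standing hypotheses --- excellence, a dualising complex, residue characteristics $0$ or $>5$ --- indispensable: they are exactly what is needed for the full MMP and the vanishing machinery to be available on every fibre.
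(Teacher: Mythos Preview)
Your proposal is correct: the paper gives no proof of this statement at all, simply citing \cite[Theorem G]{bhatt2020globally+}, and you acknowledge this up front. The sketch you add of the $X$-method via $+$-stable sections and fibrewise semiampleness is a reasonable summary of the argument in that reference, but it goes beyond what the present paper does; here the theorem is purely an imported black box.
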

	
	\begin{theorem}\cite[Theorem F]{bhatt2020globally+}
		
		Let $(X,\Delta)/T$ be a dlt $R$-pair of dimension $3$. We can run a $K_{X}+\Delta$ MMP. If $K_{X}+\Delta$ is pseudo-effective then this MMP terminates. Further an MMP with scaling by an ample divisor will terminate for any choice of pair.
		
	\end{theorem}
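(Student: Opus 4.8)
The plan is to assemble this statement from the three standard pillars of the Minimal Model Program, all available in dimension three over our class of bases: the Cone Theorem, a contraction theorem, and existence of flips, together with termination. Concretely, given a $\mathbb{Q}$-factorial dlt pair $(X,\Delta)/T$ with $K_X+\Delta$ not $f$-nef, I would invoke \autoref{cone-thm} to produce a $(K_X+\Delta)$-negative extremal ray $R=\mathbb{R}_{\geq 0}[C_i]$ of $\overline{NE}(X/T)$; the bounds in part (3) of \autoref{cone-thm} allow one to choose a rational supporting function for $R$, i.e.\ a nef $\mathbb{Q}$-Cartier divisor $L$ with $L^{\perp}\cap\overline{NE}(X/T)=R$ and $L-(K_X+\Delta)$ $f$-big and $f$-nef. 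By \autoref{bpt} this $L$ is semiample, so it induces a morphism $\phi_R\colon X\to Y$ over $T$ contracting exactly $R$. The passage between $\mathbb{R}$- and $\mathbb{Q}$-boundaries, and the reduction to the $\mathbb{Q}$-factorial case, are handled by the usual perturbation arguments and by replacing $X$ with a dlt modification.

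Next I would split into the three possible shapes of $\phi_R$. If $\dim Y<\dim X$, then $\phi_R$ is a Mori fibre space and the MMP stops. If $\phi_R$ is birational with divisorial exceptional locus, a negativity-lemma computation shows $(Y,\phi_{R*}\Delta)$ is again a $\mathbb{Q}$-factorial dlt pair over $T$, now with strictly smaller Picard number, and one iterates. The substantive case is when $\phi_R$ is small, where one needs the flip $\phi_R^{+}\colon X^{+}\to Y$ to exist with $(X^{+},\Delta^{+})$ again $\mathbb{Q}$-factorial dlt; this is exactly the point where the positive and mixed characteristic input of \cite{bhatt2020globally+} --- existence of threefold flips, via the theory of $+$-regularity and big Cohen--Macaulay algebras --- is used. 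This is the one ingredient I cannot reconstruct from the material in this excerpt, and it is the main obstacle; the rest of the construction is formal.

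For termination I would argue in two stages. For an MMP with scaling of an ample divisor $A$, the scaling thresholds form a non-increasing sequence and the contracted extremal rays have length bounded by part (3) of \autoref{cone-thm}; Shokurov's special-termination argument (which goes through in dimension $\leq 3$ once adjunction and the cone theorem are in place) then reduces an infinite run of flips to one that is eventually an isomorphism near $\lfloor\Delta\rfloor$, hence to the terminal case, where termination of flips with scaling in dimension three is classical. For an arbitrary MMP on a pseudo-effective pair $(X,\Delta)$ there is no Mori fibre outcome, so every step is a divisorial contraction or a flip; combining special termination with the minimal model produced by the terminating scaled MMP, via the standard Birkar--Shokurov argument, confines the process to finitely many models and forces it to terminate. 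Throughout, the only genuinely characteristic-sensitive inputs are the cone theorem and the existence of flips, both of which are available in this setting; the remainder is the bookkeeping of \cite[Theorem F]{bhatt2020globally+}, which we cite.
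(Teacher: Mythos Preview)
The paper gives no proof of this statement: it is quoted verbatim from \cite[Theorem~F]{bhatt2020globally+} in the preliminaries and used as a black-box input. There is therefore nothing to compare your proposal against in this paper; the actual argument lives entirely in the cited reference.

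That said, your outline is a reasonable bird's-eye sketch of how the result is assembled there, and you correctly isolate existence of flips as the characteristic-sensitive core. One caution: your termination paragraph is too breezy for this setting. Phrases like ``termination of flips with scaling in dimension three is classical'' and ``the standard Birkar--Shokurov argument'' implicitly lean on non-vanishing and finiteness-of-models inputs that are themselves non-trivial in mixed and positive characteristic; in \cite{bhatt2020globally+} these are established via the global $+$-regularity machinery and the effective non-vanishing on the generic fibre, not by importing characteristic-zero statements. In particular, the pseudo-effective termination is obtained in that paper by the route recorded here as \autoref{partial-termination} (flipping loci eventually leave the support of $\Delta$) together with non-vanishing on the generic fibre, rather than by a direct comparison with a scaled minimal model. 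So while your architecture is right, the load-bearing termination steps are not ``classical bookkeeping'' in this context and would need the specific results of \cite{bhatt2020globally+} to be made rigorous.
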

	
	Termination for pseudo-effective pairs in this setting is assured by the following theorem, together with non-vanishing on the generic fibre.
	
	\begin{theorem}\cite[Proposition 9.20]{bhatt2020globally+}\label{partial-termination}
		Let $(X,\Delta)\to T$ be a threefold dlt $R$-pair. Suppose that
		$$(X,\Delta)=:(X_{0},\Delta_{0}) \dashrightarrow (X_{1},\Delta_{1}) \dashrightarrow$$
		is a sequence of $(K_{X}+\Delta)$ flips. Then neither the flipped nor the flipping locus are contained the support of $\Delta_{n}$ for all sufficiently large $n$.
	\end{theorem}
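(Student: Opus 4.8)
The plan is to prove this by a special termination argument, in the spirit of Shokurov and Fujino, whose essential inputs are adjunction and termination of the minimal model program for surfaces (available over excellent bases by classical methods). After a standard reduction using dlt modifications (which exist by \cite[Corollary 9.19]{bhatt2020globally+}), I may assume $(X,\Delta)$ is $\mathbb{Q}$-factorial dlt.

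First I would reduce to a statement about flipping loci alone. Since each $\Delta_{n+1}$ is by definition the strict transform of $\Delta_n$, if the flipping locus $Z_n\subseteq X_n$ of the $n$-th flip is disjoint from $\mathrm{Supp}\,\Delta_n$, then every component of $\Delta_n$ lies in the isomorphism locus $X_n\setminus Z_n$, hence maps isomorphically onto its strict transform, which is then disjoint from the flipped locus $Z_n^+$; since $Z_n^+\ne\emptyset$ this gives $Z_n^+\not\subseteq\mathrm{Supp}\,\Delta_{n+1}$. So it suffices to show $Z_n\not\subseteq\mathrm{Supp}\,\Delta_n$ for $n\gg 0$. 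Suppose not. Because $\Delta_0$ has finitely many components and flips extract no divisors, the components of $\Delta_n$ are the strict transforms $S^{(1)}_n,\dots,S^{(k)}_n$ of those of $\Delta_0$; for infinitely many $n$ I pick a flipping curve $C_n\subseteq Z_n$ and a component of $\Delta_n$ containing it, and after passing to a subsequence I may assume $C_n\subseteq S_n:=S^{(1)}_n$ for all $n$ in an infinite set $I$. The minimal model program induces birational maps $S_0\dashrightarrow S_1\dashrightarrow\cdots$ of normal surfaces which contract $C_n$ at every step in $I$, hence are non-trivial infinitely often.

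The heart of the argument is adjunction along $S$. After normalizing $\nu_n\colon\widetilde S_n\to S_n$, dlt adjunction gives boundaries $\Theta_n\ge 0$ with $(\widetilde S_n,\Theta_n)$ dlt and $K_{\widetilde S_n}+\Theta_n=\nu_n^*\big((K_{X_n}+\Delta_n)|_{S_n}\big)$, compatibly with the induced maps $\widetilde S_n\dashrightarrow\widetilde S_{n+1}$. By the projection formula and $(K_{X_n}+\Delta_n)\cdot C_n<0$, the curves over $C_n$ have negative degree against $K_{\widetilde S_n}+\Theta_n$, so the induced surface maps behave like non-trivial steps of a minimal model program for the $(\widetilde S_n,\Theta_n)$; moreover the flip on $X$ strictly raises the discrepancy of every valuation with centre in the flipping locus, and by the comparison of discrepancies under adjunction the discrepancies over the $C_n$ with respect to $(\widetilde S_n,\Theta_n)$ then strictly increase along the sequence. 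Hence Shokurov's difficulty $d(\widetilde S_n,\Theta_n)$ — finite for a dlt surface pair, non-increasing along the sequence, and forced to drop infinitely often when infinitely many maps are non-trivial — is eventually exhausted, a contradiction. Together with the first reduction this proves the proposition.

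I expect the main obstacle to be precisely this surface-level bookkeeping: one must check that the restricted sequence really behaves like a terminating log MMP for surfaces — that the different is computed correctly (in particular for components of $\Delta$ of coefficient $<1$, where the naive different need not be effective and the argument must be adjusted), that normalization does not disturb the picture, that the relevant difficulty function is finite and monotone in the excellent mixed-characteristic setting and not only over a field, and that the inversion-of-adjunction comparison of discrepancies on $X$ versus on $S$ is valid here. A secondary point is the induction over strata when $Z_n$ meets several components or their intersections: one first forces isomorphism near the curves and points lying on the $\bigcap S^{(i)}_n$, using the analogous statements in dimensions $0$ and $1$, before treating the surface components.
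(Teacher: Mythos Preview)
The paper does not give its own proof of this statement; it is quoted verbatim from \cite[Proposition 9.20]{bhatt2020globally+} and used as a black box. Your proposal sketches the standard special-termination argument (adjunction to boundary components, reducing to termination of the surface log MMP over an excellent base), which is indeed essentially the method of the cited reference, so there is nothing to compare at the level of strategy.

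One point worth flagging in your sketch: the reduction in your first paragraph does not do what you want. You argue that if $Z_n$ is \emph{disjoint} from $\mathrm{Supp}\,\Delta_n$ then $Z_n^+$ is disjoint from $\mathrm{Supp}\,\Delta_{n+1}$, but the hypothesis you will actually establish is only that $Z_n$ is \emph{not contained in} the support, and from that weaker statement your implication fails (a flipping curve meeting $S$ in a point can flip to a curve lying inside the strict transform of $S$). In practice the special-termination machinery proves the stronger conclusion that the flips are eventually isomorphisms in a neighbourhood of the relevant boundary strata, which handles the flipping and flipped loci simultaneously; your reduction step should be replaced by this. The obstacles you list at the end --- the different for components of coefficient $<1$, the stratification over lower-dimensional LC centres, and the validity of the discrepancy comparison in the excellent setting --- are exactly the technical content of the argument in \cite{bhatt2020globally+}, and are not bypassed here.
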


	We will also need the following construction, essentially due to \cite{mumford1961topology}.
	
	\begin{lemma}
		
		Let $\pi:X \to Y$ be a projective contraction from a regular scheme to a normal scheme, both of dimension $2$. Let $E_{1},...,E_{n}$ be the exceptional curves. Choose a divisor $D$ on $Y$ and write $D'$ for the strict transform of $D$. Then there are unique $m_{i} \geq 0$ with $D'+\sum m_{i}E_{i} \equiv_{Y} 0$. If $D$ is $\mathbb{Q}$-Cartier then we have $\pi^{*}D= D'+\sum m_{i}E_{i}$.
		
	\end{lemma}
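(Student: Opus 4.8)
The plan is to reduce the whole statement to the classical fact, due to Mumford \cite{mumford1961topology}, that the intersection matrix of the exceptional curves of a resolution of a normal surface singularity is negative definite. First I would record that $\pi$ is birational: a generically finite contraction is birational, and its exceptional locus is then a union of the curves $E_{i}$, with $\pi$ an isomorphism over $Y\setminus\pi(\bigcup_{i}E_{i})$; moreover every irreducible curve contracted by $\pi$ is one of the $E_{i}$. Since each $E_{j}$ is Cartier on the regular scheme $X$ and each $E_{i}$ is proper, the numbers $E_{i}\cdot E_{j}$ are defined, and I claim the symmetric matrix $M=(E_{i}\cdot E_{j})_{i,j}$ is negative definite. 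This is local on $Y$, so one may assume $Y$ affine; the argument of \cite{mumford1961topology} is to choose an effective Cartier divisor $D_{0}$ on $Y$ through every point $\pi(E_{i})$, so that $\pi^{*}D_{0}=D_{0}'+\sum_{i}a_{i}E_{i}$ with all $a_{i}>0$ and $\left(\sum_{i}a_{i}E_{i}\right)\cdot E_{j}=-D_{0}'\cdot E_{j}\le 0$ by the projection formula, and then to bound the quadratic form $q(x)=\sum_{i,j}x_{i}x_{j}\,E_{i}\cdot E_{j}$ by rescaling $x_{i}=a_{i}t_{i}$ and applying $2st\le s^{2}+t^{2}$ to the cross terms, which gives $q(x)\le\sum_{i}a_{i}t_{i}^{2}\left(\sum_{j}a_{j}E_{i}\cdot E_{j}\right)\le 0$; nondegeneracy on each connected component of $\bigcup_{i}E_{i}$ then upgrades this to strict negative definiteness. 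In particular $M$ is invertible.

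Next I would turn the defining condition into linear algebra. Because the only irreducible curves contracted by $\pi$ are the $E_{j}$, a divisor class on $X$ is numerically trivial over $Y$ exactly when it has degree $0$ on every $E_{j}$; so, setting $v=(D'\cdot E_{j})_{j}$, a tuple $(m_{i})$ satisfies $D'+\sum_{i}m_{i}E_{i}\equiv_{Y}0$ if and only if $Mm=-v$. As $M$ is invertible this linear system has the unique solution $m=-M^{-1}v$, which gives existence and uniqueness of the $m_{i}$ simultaneously. For the inequality $m_{i}\ge 0$, note that $D'$, being a strict transform, shares no component with any $E_{j}$, so effectivity of $D$ gives $v_{j}=D'\cdot E_{j}\ge 0$; and $-M$ is symmetric positive definite with non-positive off-diagonal entries (distinct curves on a surface meet non-negatively), hence a Stieltjes matrix, so $(-M)^{-1}$ has all entries $\ge 0$ and $m=(-M)^{-1}v\ge 0$.

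Finally, if $D$ is $\mathbb{Q}$-Cartier then $\pi^{*}D$ is defined and, by the projection formula, $(\pi^{*}D)\cdot E_{j}=D\cdot\pi_{*}E_{j}=0$ for every $j$, so $\pi^{*}D\equiv_{Y}0$. Since $\pi$ is an isomorphism away from $\bigcup_{i}E_{i}$ and $D$ carries no component supported at the finitely many points $\pi(E_{i})$, the difference $\pi^{*}D-D'$ is supported on $\bigcup_{i}E_{i}$, say $\pi^{*}D=D'+\sum_{i}n_{i}E_{i}$; then $\sum_{i}(n_{i}-m_{i})E_{i}\equiv_{Y}0$, so $M(n-m)=0$ and hence $n=m$, giving $\pi^{*}D=D'+\sum_{i}m_{i}E_{i}$. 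I expect the only real obstacle to be the negative definiteness of $M$ — everything after that is formal; the two points needing a little care are that $Y$ need not be globally projective (circumvented by working locally on $Y$) and that the non-negativity of the $m_{i}$ uses that $D$ is effective.
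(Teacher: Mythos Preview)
Your argument is correct and tracks the paper's closely: both reduce existence and uniqueness to negative definiteness of the intersection matrix (the paper cites \cite[Theorem~10.1]{kk-singbook}, you sketch Mumford's proof), and both deduce the $\mathbb{Q}$-Cartier case from uniqueness. The only genuine divergence is in the proof that $m_{i}\ge 0$. The paper produces an effective exceptional divisor $E=\sum r_{i}E_{i}$ with $-E$ relatively ample (citing \cite[Lemma~10.2]{kk-singbook}), then argues by contradiction: if some $m_{k}<0$ with $m_{k}/r_{k}$ minimal, comparing $\sum_{i}\tfrac{m_{i}}{r_{i}}(r_{i}E_{i}\cdot E_{k})$ against $\tfrac{m_{k}}{r_{k}}(E\cdot E_{k})$ yields $0\ge\cdots>0$. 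You instead observe that $-M$ is a Stieltjes matrix, hence has entrywise non-negative inverse, and conclude $m=(-M)^{-1}v\ge 0$ directly. These are really the same mathematics in different packaging --- the paper's minimal-ratio argument is one of the standard proofs that a Stieltjes matrix has non-negative inverse --- so your route is marginally slicker but not materially different. Your closing caveat that effectivity of $D$ is needed for $m_{i}\ge 0$ is correct and worth flagging, since the statement as written does not say so explicitly (the paper's proof uses it in the line $D'\cdot E_{j}\ge 0$).
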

	
	\begin{proof}
		
		By \cite[Theorem 10.1]{kk-singbook}, the intersection form $[E_{i}.E_{j}]$ is negative definite. Hence there is a unique choice of $m_{i}$ with $D'+\sum m_{i}E_{i} \equiv_{Y} 0$. It remains to show that $m_{i} \geq 0$. By \cite[Lemma 10.2]{kk-singbook} there is $E= \sum r_{i}E_{i}$ effective on $X$ with $-E$ ample over $Y$. Then $E.E_{i} < 0$ for each $i$ ensures that $r_{i} > 0$ for all $i$.

		Now suppose for contradiction that $m_{k} < 0$ for some $k$. Then we may suppose that $m_{k}/r_{k}$ is minimal, otherwise if $m_{j}/r_{j}$ is minimal we just replace $k$ with $j$ as we must still have $m_{j} < 0$. We must have, for every $j$, that $D'.E_{j}\geq 0 $ as it does not contain any $E_{j}$ and thus as $D' \equiv_{Y} - \sum m_{i}E_{i}$ we have
		
		\[0\geq (\sum_{i} m_{i} E_{i}).E_{j} = \sum_{I} \frac{m_{i}}{r_{i}}(r_{i}E_{i}.E_{j}) \geq \frac{m_{k}}{r_{k}} \sum_{i} (r_{i}E_{i}.E_{j}) > 0\]
		
		This is a contradiction and hence in fact $m_{i} \geq 0$ for each $i$. That this agrees with the pullback when $D$ is $\mathbb{Q}$-Cartier is immediate from uniqueness.
		
	\end{proof}
	
	\begin{lemma}\label{num-pull}
		
		Let $X$ be an $\mathbb{Q}$-factorial scheme together with a projective morphism $f:X \to Y$  with geometrically connected fibres to an excellent normal scheme of dimension $2$. Suppose $V$ is a closed subscheme of $X$ with $f(V)$ contained in a divisor $D$. Then there is a divisor $D'$ on $X$ lying over $D$, numerically trivial over $Y$ and containing $V$.
		
		\end{lemma}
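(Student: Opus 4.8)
The plan is to produce $D'$ as a numerical pullback of $D$ to $X$. The obstacle to doing this directly is that $Y$, being only normal of dimension two, need not be $\mathbb{Q}$-factorial, so $D$ need not be $\mathbb{Q}$-Cartier. I would first resolve this: let $\rho\colon \tilde Y \to Y$ be a resolution of singularities (available since $Y$ is excellent of dimension two), let $\tilde D$ be the strict transform of $D$, and let $E_{1},\dots,E_{r}$ be the $\rho$-exceptional curves. Applying the previous lemma to $\rho$ produces unique $m_{i}\ge 0$ with $\tilde D^{*}:=\tilde D+\sum_{i}m_{i}E_{i}\equiv_{Y}0$, which plays the role of $\rho^{*}D$ and is an effective Cartier divisor on the regular surface $\tilde Y$.

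Before transferring this to $X$ I would nail down its support by showing $m_{i}>0$ whenever $E_{i}$ maps into $D$, so that $\operatorname{Supp}\tilde D^{*}=\rho^{-1}(D)$. Fix $p_{0}\in D$; the curves $E_{i}$ contracted to $p_{0}$ form a connected configuration with negative definite intersection form by \cite[Theorem 10.1]{kk-singbook}, and $\tilde D$ meets the connected fibre $\rho^{-1}(p_{0})$ since it is the strict transform of a curve through $p_{0}$. Hence the equations $\tilde D^{*}\cdot E_{i}=0$ say that $N:=\sum m_{i}E_{i}$ (summed over the $E_{i}$ above $p_{0}$) satisfies $N\cdot E_{i}\le 0$ for all such $i$, with strict inequality for at least one, and the usual connectedness-and-negative-definiteness argument then forces every relevant $m_{i}$ to be strictly positive (run componentwise if $D$ is reducible).

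To pass to $X$ I would take a common model: let $\tilde X$ be the normalization of the component of $X\times_{Y}\tilde Y$ dominating $X$, with induced projective birational morphism $p\colon \tilde X\to X$ and morphism $\tilde f\colon \tilde X\to\tilde Y$, so $f\circ p=\rho\circ\tilde f$. Then $\tilde f^{*}\tilde D^{*}$ is an effective divisor on $\tilde X$ with support $p^{-1}(f^{-1}(D))$, and the key point is that it is numerically trivial over $X$: a $p$-contracted curve $C'$ has $\tilde f(C')$ contracted by $\rho$ (because $\rho\tilde f(C')$ is a point), so $\tilde f^{*}\tilde D^{*}\cdot C'=\tilde D^{*}\cdot\tilde f_{*}C'=0$ as $\tilde D^{*}\equiv_{Y}0$. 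Since $X$ is $\mathbb{Q}$-factorial, $D':=p_{*}(\tilde f^{*}\tilde D^{*})$ is $\mathbb{Q}$-Cartier, and $p^{*}D'-\tilde f^{*}\tilde D^{*}$ is $p$-exceptional and numerically trivial over $X$, hence zero by the negativity lemma (\cite{KM}); thus $p^{*}D'=\tilde f^{*}\tilde D^{*}$.

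It then remains to read off that $D'$ works. It is effective, each of its prime components $P$ satisfies $f(P)\subseteq D$ (since $P\subseteq\operatorname{Supp}D'$ forces the strict transform of $P$ into $\operatorname{Supp}\tilde f^{*}\tilde D^{*}$), and $\operatorname{Supp}D'=f^{-1}(D)\supseteq V$, using $f(V)\subseteq D$ together with the fact that $f^{-1}(D)$ has pure codimension one — a jump of the fibre dimension of $f$ over a curve $B\subseteq Y$ would force $f^{-1}(B)=X$. Finally $D'$ is numerically trivial over $Y$: given an $f$-contracted curve $C$, lift it to a curve $\tilde C$ on $\tilde X$ surjecting onto $C$ and compute $\deg(\tilde C/C)\,(D'\cdot C)=p^{*}D'\cdot\tilde C=\tilde D^{*}\cdot\tilde f_{*}\tilde C=0$, since $\tilde f(\tilde C)$ is $\rho$-contracted; clearing denominators gives an integral divisor if one is wanted. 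I expect the only real work in all of this to be the positivity of the $m_{i}$ (so that the support of $D'$ captures all of $f^{-1}(D)$, hence $V$) and, slightly more delicately, the behaviour of $f$ and of $f^{-1}(D)$ over the finitely many closed points of $Y$ where $Y$ fails to be $\mathbb{Q}$-factorial; the descent from $\tilde X$ to $X$ and the numerical triviality are then formal.
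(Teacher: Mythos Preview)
Your argument is correct and follows the same overall strategy as the paper: resolve $Y$, form the numerical pullback $\tilde D^{*}$ on $\tilde Y$ via the preceding lemma, pass to the normalised main component of $X\times_{Y}\tilde Y$, and descend to $X$ using $\mathbb{Q}$-factoriality together with the negativity lemma. The construction of $D'$ and the verification that $D'\equiv_{Y}0$ are essentially identical to the paper's (the paper is terser about the descent step, but the content is the same).

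The one genuine point of divergence is how you show $V\subseteq\operatorname{Supp}D'$. You prove strict positivity $m_{i}>0$ for all exceptional curves of $\rho$ lying over $D$, so that $\operatorname{Supp}\tilde D^{*}=\rho^{-1}(D)$, and then read off $\operatorname{Supp}D'=f^{-1}(D)$ from $p^{*}D'=\tilde f^{*}\tilde D^{*}$. The paper instead argues directly on $X$: since $D'\equiv_{Y}0$, any curve $C$ in a fibre over $D$ has $D'\cdot C=0$; if $C\not\subseteq D'$ then, using that $f$ has geometrically connected fibres and that $D'$ surjects onto $D$, one may replace $C$ by a curve in the same fibre meeting but not contained in $D'$, forcing $D'\cdot C>0$, a contradiction. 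Your route trades this connected-fibre trick on $X$ for a connectedness/negative-definiteness argument on $\tilde Y$; as a bonus it never invokes the hypothesis that $f$ has connected fibres, and the remark about pure codimension one of $f^{-1}(D)$ is in fact unnecessary once you have $p^{*}D'=\tilde f^{*}\tilde D^{*}$, since $p^{-1}\operatorname{Supp}D'=p^{-1}f^{-1}(D)$ and $p$ is surjective already give $\operatorname{Supp}D'=f^{-1}(D)$ set-theoretically.
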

	
	\begin{proof}
		
		Let $\pi \colon Y' \to Y$ be a resolution of $Y$ and $X'$ be the normalisation of the dominant component of the fibre product $X\times_{Y} Y'$. From above we have $F$ on $Y$ lying over $D$ with $F \equiv_{Y} 0$. We have induced maps $g\colon X' \to Y'$ and $\phi\colon  X' \to X$. Now $g^{*}F$ is numerically trivial over $Y$, and hence over $X$. Thus as $X$ is $\mathbb{Q}$-factorial there is $D'$ with $\phi^{*}D'=F$. It is clear from the construction that $f_{*}D'=\pi_{*}F=D$. Suppose that $C$ is a curve lying over $D$, then we must have $D'.C =0$. If $C$ is not contained in $D'$ then since $f$ has connected fibres we may suppose that $D'$ meets $C$, up to replacing $C$ with another curve in the same fibre, but then $D'.C > 0$, a contradiction. Hence $D'$ contains every curve, and hence every fibre, over $D$. In particular it contains $V$.

	\end{proof}

	\section{Termination}
	
	In this section we study termination for threefold pairs over positive dimensional bases. In this setting we will show that every $K_{X}+\Delta$ MMP terminates for a dlt pair $(X,\Delta)/T$. We rely heavily on \autoref{partial-termination}. The key remaining argument is if $X \to T$ is a klt pair then there is an open set on which every contraction is horizontal.
	
	We prove this by reducing to the case that $(X,\Delta)$ is terminal. In mixed and positive characteristic this then follows from the liftablity of $-1$ curves, see \cite{katsura1985elliptic}. This argument does not work in purely positive characteristic but provides motivation for our approach. Instead we adapt a termination argument for terminal pairs, largely due to Shokurov \cite{shokurov1986nonvanishing}. If $X \to T$ is projective and $U\subseteq T$ is an open set we will write $X_{U}=X\times_{T}U$ and $\Delta_{U}=\Delta|_{X_{U}}$.

	\begin{definition}
		
		Let $X$ be a terminal threefold log pair quasi-projective over $R$. We define the difficulty 
		\[d(X)=\#\{E\colon a(E,X) < 1\}\]
		this is finite by \cite[Proposition 2.36]{KM}, since log resolutions exist by \cite[Proposition 2.12]{bhatt2020globally+}.
		
	\end{definition}
	
	Clearly if $Y\hookrightarrow X$ is an open immersion then $d(Y) \leq d(X)$ since every valuation with centre on $Y$ is also a valuation with centre on $X$.
	If $X \dashrightarrow X'$ is a $K_{X}$ flip then $d(X') \leq d(X)$ by \cite[Lemma 3.38]{KM}. We claim in fact this inequality is strict.
	
	\begin{lemma}
		
		Let $X/T$ be a terminal threefold $R$-pair and $X \dashrightarrow X'$ a $K_{X}$ flip, then $d(X') < d(X)$.  
		
	\end{lemma}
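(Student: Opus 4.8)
The plan is to exhibit a single divisorial valuation that contributes to $d(X)$ but not to $d(X')$. Since a flip is an isomorphism in codimension one, a divisorial valuation of the common function field has centre of codimension $\geq 2$ on $X$ if and only if it does on $X'$, so the two difficulties count from the same set of valuations; and $a(E,X) \leq a(E,X')$ for every such $E$ (the inequality already invoked, \cite[Lemma 3.38]{KM}), so $\{E: a(E,X')<1\}\subseteq\{E:a(E,X)<1\}$. Both sets being finite, it suffices to find an exceptional $E$ with $a(E,X)<1$ and $a(E,X')=1$.

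Write the flip as $X \xrightarrow{f} Z \xleftarrow{f'} X'$, with $-K_X$ being $f$-ample and $K_{X'}$ being $f'$-ample. Let $C'$ be an irreducible component of the nonempty $f'$-exceptional locus, a curve with $K_{X'}\cdot C'>0$. As $X'$ is terminal it is regular in codimension two, hence regular at the generic point $\eta$ of $C'$, so the order of vanishing along $C'$ is a divisorial valuation $E$; computing its discrepancy at $\eta$ — i.e.\ blowing up the maximal ideal of the two-dimensional regular local ring $\ox[X',\eta]$ — gives $a(E,X')=\operatorname{codim}_{X'}C'-1=1$. By the codimension-one identification $E$ is also exceptional over $X$, with centre contained in $f^{-1}(f'(C'))$, which is a union of flipping curves.

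The remaining point, and the only one that is not bookkeeping, is to sharpen $a(E,X)\leq a(E,X')=1$ to the strict inequality $a(E,X)<1$; this is exactly where one uses that the step is a flip rather than an arbitrary birational contraction. Take a common resolution $p\colon W\to X$, $q\colon W\to X'$ and set $g=f\circ p=f'\circ q$ and $D:=p^{*}K_X-q^{*}K_{X'}$. Then $D$ is $g$-exceptional and $-D$ is $g$-nef (since $-K_X$ is $f$-ample and $K_{X'}$ is $f'$-ample), so $D$ is effective by the negativity lemma. Writing $z=f'(C')$, the fibre $g^{-1}(z)$ contains a curve on which $p^{*}K_X$ is negative (a lift of a flipping curve over $z$), so $\operatorname{Supp}(D)$ meets $g^{-1}(z)$, and the sharp form of the negativity lemma then forces $g^{-1}(z)\subseteq\operatorname{Supp}(D)$. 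The prime divisor of $W$ realising $E$ lies in $g^{-1}(z)$, hence appears in $D$ with strictly positive coefficient, and that coefficient equals $a(E,X')-a(E,X)$; therefore $a(E,X)<1$. Thus $E$ is counted by $d(X)$ but not by $d(X')$, and so $d(X')<d(X)$.
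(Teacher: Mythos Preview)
Your proof is correct and follows essentially the same approach as the paper: both take $E$ to be the exceptional divisor of the blowup of a flipped curve $C'$, use regularity of the terminal $X'$ at the generic point of $C'$ to get $a(E,X')=1$, and then establish the strict inequality $a(E,X)<a(E,X')$. The only difference is cosmetic: the paper cites \cite[Lemma~3.38]{KM} directly for the strict inequality (that lemma already gives strictness whenever the centre of $E$ lies in the flipping locus), whereas you unpack its proof via the negativity lemma—so your third paragraph is really a reproof of the strict part of that lemma rather than a new ingredient. One small point of exposition: for the coefficient of $E$ in $D$ to equal $a(E,X')-a(E,X)$ you need $E$ to actually appear as a prime divisor on your common resolution $W$, so $W$ should be chosen to dominate the blowup of $C'$.
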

	
	\begin{proof}
		It suffices to find a divisor $E$ with $a(E,X) <1$ and $a(E,X') \geq 1$.
		Let $C'$ be an irreducible component of the flipped curve. Then $X'$ is terminal, so it is smooth at the generic point $P$ of $C$ by \cite[Corollary 2.30]{kk-singbook}. Let $Y \to X$ be the blowup of $C'$ and $E$ the dominant component of the exceptional divisor. By localising at $P$ we see that $a(E,X')=1$, since this is the blowup of a smooth point on a surface.
		
		Let $C$ be the centre of $E$ on $X$. Then $C$ is a component of the flipping curve and so we have $a(E,X) < a(E,X')$ by \cite[Lemma 3.38]{KM} concluding the proof.	
	\end{proof}
	
	\begin{theorem}\label{l-open-term}
		
		Let $(X,\Delta)/T$ be a terminal threefold $R$-pair. Then there is an open set $U \subseteq T$ such that every $K_{X_{U}}+\Delta_{U}$ negative contraction is a horizontal divisorial contraction.
	\end{theorem}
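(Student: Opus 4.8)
\emph{Strategy.} The plan is to pull the problem back to the generic fibre of $X\to T$, which is at most two-dimensional and hence admits no flips, and to transport this rigidity to an open subset of the base by means of the difficulty function.

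Let $\eta$ be the generic point of the image of $X$ in $T$ and $X_{\eta}$ the generic fibre. Since $X$ is a threefold whose image in $T$ is positive-dimensional, $X_{\eta}$ is a normal terminal variety over $K(\eta)$ of dimension at most two, and $(X_{\eta},\Delta_{\eta})$ is terminal; in particular $X_{\eta}$ is regular. Two consequences will be used. First, by the structure of birational contractions of a regular surface, $X_{\eta}$ has no small $(K_{X_{\eta}}+\Delta_{\eta})$-negative contraction, and every birational one is divisorial; consequently, for a sufficiently small open $U$, a horizontal flipping curve of $X_{U}$ would restrict on $X_{\eta}$ to a curve contracted by an isomorphism in codimension one, which is absurd, so all flipping curves over such a $U$ are vertical. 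Second, every divisorial valuation with centre on $X_{\eta}$ has discrepancy at least one over $X_{\eta}$, and the discrepancy over $X$ of a divisorial valuation whose centre dominates $T$ agrees with its discrepancy over $X_{\eta}$; hence no such valuation has discrepancy below one over $X$. Since the difficulty is finite, only finitely many divisorial valuations $E$ have $a(E,X)<1$, and all of them then have centre not dominating $T$; deleting from $T$ the (finitely many) images of these centres and of the vertical components of $\Delta$, and restricting to the flat locus of $X\to T$, we obtain a non-empty open $U_{0}\subseteq T$ with $d(X_{U})=0$ for every non-empty open $U\subseteq U_{0}$, over which moreover $\Delta$ is horizontal.

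Next I would show that, starting from $X_{U_{0}}$ and running any $(K+\Delta)$-MMP over $U_{0}$, only finitely many flipping contractions and finitely many divisorial contractions with vertical exceptional divisor occur, all over a proper closed subset $Z\subsetneq U_{0}$. Divisorial contractions are finite in number since each strictly drops the relative Picard number. For flips: their flipping curves are vertical, so the flip is defined over $U_{0}$; the difficulty is non-increasing under flips and, by \autoref{partial-termination}, the flipping and flipped loci eventually avoid $\operatorname{Supp}\Delta$, at which point the flips become $K$-flips and strictly decrease the difficulty, which is a non-negative integer bounded throughout the process and so cannot decrease forever; hence only finitely many flips occur, each over a proper closed subset of $U_{0}$. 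Setting $U=U_{0}\setminus Z$, with $Z$ the union of the images of all the flipping and vertical divisorial loci so encountered, every $(K_{X_{U}}+\Delta_{U})$-negative birational contraction of $X_{U}$ — and of every variety produced from it by the MMP — is divisorial with exceptional divisor dominating $T$; that is, it is a horizontal divisorial contraction.

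I expect the third paragraph to be the main obstacle: the difficulty bound and \autoref{partial-termination} are statements about \emph{sequences} of flips, and deducing from them that no flipping (or vertical divisorial) contraction is available over a single open $U$, uniformly along the MMP, takes care — in particular in handling flipping curves contained in $\operatorname{Supp}\Delta$, where the reduction to $K$-flips is not immediate. In characteristic zero one could bypass all of this using generic smoothness of $X\to T$ to trivialise the vertical geometry over an open set; since that tool is unavailable in positive or mixed characteristic, the argument is built around the difficulty function instead.
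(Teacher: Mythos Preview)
Your reduction to the case $d(X_{U_0})=0$ via the generic fibre is nice and, for $\Delta=0$, leads to essentially the same conclusion as the paper's base case (which argues by contradiction, shrinking $U$ and tracking $\rho$ and $d$). The gap is entirely in the passage to $\Delta\neq 0$.

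There are two problems with your third paragraph. First, the difficulty $d(\,\cdot\,)$ is defined with respect to $K$ alone, and there is no reason it should be non-increasing under a $(K+\Delta)$-flip whose flipping curve sits inside $\operatorname{Supp}\Delta$: for such a flip one controls $a(E,X,\Delta)$, not $a(E,X)$. So after the first such flip you may have lost $d=0$, and the later ``$K$-flips strictly decrease $d$, contradiction'' step no longer bites. Second, \autoref{partial-termination} is a statement about a single sequence of flips of a fixed pair over a fixed base; your process is a sequence of restrictions $X_{U_0}\supset X_{U_1}\supset\cdots$ with the base shrinking at every step, and the intermediate varieties are not projective over a common $T$. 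Even if you instead run an honest MMP on $X_{U_0}$, removing the loci of \emph{that} MMP does not preclude other bad extremal rays on $X_U$ --- you would need to iterate, and then you are back to needing an invariant that drops under every bad contraction, which is exactly what fails.

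The paper supplies the missing idea by a different mechanism: induction on the number of prime components of $\Delta$. Writing $\Delta=\Delta^{n-1}+a_nD_n$, the inductive hypothesis gives an open $U$ over which every $(K+\Delta^{n-1})$-negative contraction is horizontal divisorial; hence any remaining bad $(K+\Delta)$-negative ray $L$ must satisfy $D_n\cdot L<0$. One then shrinks $U$ so that the normalisation $S$ of $D_n$ has relative Picard rank at most one over $U$ (trivially if $\dim T=2$, by \cite{tanaka2018minimal} if $\dim T=1$). The contraction of $L$ restricts to a nontrivial birational morphism on $S$, which for $\rho(S/U)\le 1$ forces $S$, and hence $D_n$, to be contracted entirely --- a horizontal divisorial contraction after all. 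This Picard-rank argument on the boundary component is the substitute for the difficulty bound when the bad curve lives in $\operatorname{Supp}\Delta$, and it is what your proposal is missing.
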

	
	\begin{proof}
		
		Write $\Delta=\sum_{1}^{n} a_{k}D_{k}$, we argue by induction on $n$. Suppose first that $n=0$ and for contradiction there is no such $U$. Thus we have a sequence of non-empty open sets $U_{i} \subseteq U_{i-1}$ such that there is a $K_{X_{U_{i}}}$ negative extremal ray $L_{i}$ supported away from $U_{i+1}$. We write $X_{i}=X\times U_{i}$.
		
		If $L_{i}$ induces a divisorial contraction $f_{i} \colon X_{i} \to X'_{i}$ then $\rho(X_{i+1}) \leq \rho(X'_{i}) < \rho(X_{i})$ since $f_{i}$ is an isomorphism over $U_{i+1}$. Similarly if $L_{i}$ induces a flip $f_{i} \colon X_{i} \dashrightarrow X'_{i}$ then $d(X_{i+1}) \leq d(X'_{i}) < d(X_{i})$. Since both are positive integers there can be only finitely many such $U_{i}$, a contradiction.
		
		Now suppose $n>0$.	Let $\Delta^{n-1}=\sum_{1}^{n-1}a_{i}D_{i}$ then by induction there is an open set $U\subseteq T$ such that every $K_{X_{U}}+\Delta^{n-1}_{U}$ negative contraction is a horizontal divisorial contraction. If $D_{n}$ is not horizontal, we can shrink $U$ so it doesn't meet the image of $D_{n}$ and the result follows immediately. This gives the result if $\dim T=3$.
		
		Otherwise let $S$ be the normalisation of $D_{n}$. If $\dim T=2$ then there is an open set $V$ of $T$ on which $S_{V} \to T$ is finite, and hence of relative Picard rank $0$. In particular $S_{V}$ contains no curves. If $\dim T=1$ then by \cite[Lemma 2.13]{tanaka2018minimal} there is an open set $V$ of $T$ such that $S_{V}$ has relative Picard rank $1$. 
		
		In either case, replace $U$ with $U \cap V$, then $X,S$ with $X_{U},S_{U}$ and $\Delta$ with $\Delta|_{X_{U}}$. It suffices to show that every extremal $K_{X}+\Delta$ negative contraction is a horizontal divisorial contraction. Suppose for contradiction $L$ is an extremal ray inducing one that is not. We must have $D_{n}.L <0$ from our choice of $U$. Thus induced contraction restricts to a nontrivial birational morphism $S\to S'$ say. However $S$ has Picard rank at most $1$, so the only possibility is this map contracts $S$ entirely. In particular this defines a horizontal divisorial contraction, a contradiction. The claim follows.
		
	\end{proof}
	
	We can extend this immediately to klt pairs.
	
	\begin{theorem}\label{c-open-klt}
		Let $(X,\Delta)/T$ be a terminal threefold $R$-pair. Then there is an open set $U \subseteq T$ such that every $K_{X_{U}}+\Delta_{U}$ negative contraction is a horizontal divisorial contraction.
	\end{theorem}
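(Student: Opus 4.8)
The plan is to reduce to the terminal case, \autoref{l-open-term}, by passing to a $\mathbb{Q}$-factorial terminalisation; note that the statement is presumably meant for a klt pair $(X,\Delta)/T$, since for terminal $(X,\Delta)$ it is literally \autoref{l-open-term}.

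First I would produce a projective birational morphism $g\colon Y\to X$ with $Y$ $\mathbb{Q}$-factorial, $(Y,\Gamma)$ terminal, and $K_Y+\Gamma=g^*(K_X+\Delta)$ for an effective $\Gamma$: take a log resolution (\cite[Proposition 2.12]{bhatt2020globally+}), run a suitable MMP over $X$ (\cite[Theorem F]{bhatt2020globally+}), and contract the exceptional divisors of positive discrepancy. Then $(Y,\Gamma)/T$ is a $\mathbb{Q}$-factorial terminal threefold $R$-pair, so \autoref{l-open-term} gives a nonempty open $U\subseteq T$ over which every $K_{Y_U}+\Gamma_U$ negative contraction is a horizontal divisorial contraction. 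I claim this $U$ works for $(X,\Delta)$.

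Suppose not: there is a $K_{X_U}+\Delta_U$ negative extremal ray $R$ of $\overline{NE}(X_U/U)$ whose birational contraction $h\colon X_U\to W$ is not a horizontal divisorial contraction. Let $E_h$ be the union of the curves with class in $R$, i.e.\ the locus contracted by $h$. If $h$ is small then $E_h$ is a union of curves each contracted over $U$; if $h$ is divisorial, the divisor it contracts is, by assumption, vertical over $T$. Either way $E_h$ does not dominate $U$, hence neither does $g_U^{-1}(E_h)$. Since $-(K_{X_U}+\Delta_U)$ is $h$-ample, lifting a curve contracted by $h$ to $Y_U$ shows $K_{Y_U}+\Gamma_U$ is not nef over $W$; and as the pullback to $Y_U$ of a divisor ample over $U$ on $W$ cuts out $\overline{NE}(Y_U/W)$ as a face of $\overline{NE}(Y_U/U)$, the Cone Theorem \autoref{cone-thm} yields a $K_{Y_U}+\Gamma_U$ negative extremal ray $R'$ of $\overline{NE}(Y_U/U)$ contained in that face. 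By \autoref{l-open-term} its contraction is a horizontal divisorial contraction, with some exceptional prime divisor $F$. But $F$ is swept out by curves of class in $R'\subseteq\overline{NE}(Y_U/W)$, and any such curve is contracted by $Y_U\to W$ and, having negative $K_{Y_U}+\Gamma_U$, is not $g_U$-exceptional; so $g_U$ maps it into $E_h$, whence $F\subseteq g_U^{-1}(E_h)$. This contradicts that $F$ dominates $U$.

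The hard part will be the two global inputs used above: the existence of the $\mathbb{Q}$-factorial terminalisation $g$ in this mixed-characteristic setting (which should be available from the machinery of \cite{bhatt2020globally+} but wants a precise citation), and the elementary-but-essential observation that $\overline{NE}(Y_U/W)$ is a face of $\overline{NE}(Y_U/U)$, so that a ray extremal and negative over $W$ is one of the rays delivered by the Cone Theorem over $U$ and \autoref{l-open-term} applies to it. The remaining steps — that a threefold divisorial contraction's exceptional divisor is covered by curves of the contracted ray, and that the preimage of a set not dominating $U$ again does not dominate $U$ — are routine.
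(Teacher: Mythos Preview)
Your proof is correct and follows the same strategy as the paper: take a terminalisation (the paper cites \cite[Proposition 9.17]{bhatt2020globally+} for this), apply \autoref{l-open-term} to $(Y,\Gamma)$, and derive a contradiction from the horizontal exceptional divisor on $Y_U$ produced by a $(K_{Y_U}+\Gamma_U)$-negative extremal ray over $W$. The paper phrases the final contradiction via the generic point $\nu$ of $T$ (``$f_\nu$ cannot be an isomorphism, else $K_{Y_\nu}+\Delta_{Y_\nu}$ would be nef over $Z_\nu$'') rather than via the image of $E_h$ in $U$, but these are equivalent, and your version is more explicit about why the ray $R'$ is extremal over $U$ than the paper's somewhat elliptical ``we get a contraction $g\colon Y_U\to Z$, which is necessarily a horizontal divisorial contraction''.
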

	
	\begin{proof}
		
		Let $\pi \colon (Y,\Delta_{Y}) \to (X,\Delta)$ be a terminalisation, which exists by \cite[Proposition 9.17]{bhatt2020globally+}. Then by \autoref{l-open-term} there is an open set $U\subseteq T$ over which every $K_{Y_{U}}+\Delta_{Y_{U}}$ negative contraction is divisorial. We claim the same holds for $K_{X_{U}}+\Delta_{U}$ negative contractions. 
		
		Indeed if $f \colon X_{U} \to Z$ is any such contraction then $K_{Y_{U}}+\Delta_{Y_{U}}$ is not nef over $Z$. In particular we get a contraction $g \colon Y_{U} \to Z$, which is necessarily a horizontal divisorial contraction. In particular $g$ is not an isomorphism over the generic point $\nu$ of $T$. However then neither can $f$ be, else $K_{Y_{\nu}}+\Delta_{Y_{\nu}}$ would be nef over $Z_{\nu}$. Thus $f$ is a horizontal divisorial contraction as claimed.
		
	\end{proof}
	
	\begin{corollary}\label{termination}
		Let $f:(X,\Delta) \to T$ be a $\mathbb{Q}$-factorial threefold dlt pair over $R$, then any $K_{X}+\Delta$ MMP terminates.
	\end{corollary}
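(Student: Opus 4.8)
The plan is to descend in stages from dlt pairs to klt pairs to terminal pairs, where termination of flips is controlled by Shokurov's difficulty function. First, a divisorial contraction strictly drops the (finite) relative Picard number $\rho(X_i/T)$, so any $K_X+\Delta$ MMP contains only finitely many divisorial contractions, and it suffices to rule out an infinite sequence of flips $(X,\Delta)=(X_0,\Delta_0)\dashrightarrow(X_1,\Delta_1)\dashrightarrow\cdots$. By \autoref{partial-termination} we may discard an initial segment and assume that for every $i$ the flipping locus of $X_i\dashrightarrow X_{i+1}$ is not contained in $\operatorname{Supp}\Delta_i$; then it has a component $C_i$ with $\Delta_i\cdot C_i\ge 0$, so $K_{X_i}\cdot C_i\le(K_{X_i}+\Delta_i)\cdot C_i<0$ and the contracted ray is $K_{X_i}$-negative. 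Hence the same birational maps also form a sequence of $K_{X_i}$-flips (of the contractions $X_i\to W_i$, which are $K_{X_i}$-negative), and $X_0$ is klt, being the underlying variety of a dlt pair. So we are reduced to showing that a $\mathbb{Q}$-factorial klt threefold $X_0/T$ admits no infinite sequence of $K_{X_0}$-flips.

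For this, let $\pi_0\colon Y_0\to X_0$ be a terminalisation (which exists by \cite[Proposition 9.17]{bhatt2020globally+}): $Y_0$ is $\mathbb{Q}$-factorial and terminal, $\Gamma_0\ge 0$, $K_{Y_0}+\Gamma_0=\pi_0^*K_{X_0}$, and $(Y_0,\Gamma_0)$ is a terminal $R$-pair. I claim the infinite sequence of $K$-flips lifts to an infinite $(K_{Y_0}+\Gamma_0)$-MMP over $T$. Inductively, given a terminalisation $\pi_i\colon Y_i\to X_i$ and the $i$-th flip $X_i\to W_i\leftarrow X_{i+1}$ (so $-K_{X_i}$ is ample over $W_i$), run a $(K_{Y_i}+\Gamma_i)$-MMP over $W_i$ with scaling of an ample divisor. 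Since $Y_i\to W_i$ is birational this terminates, after finitely many flips — at least one, over a curve dominating $C_i$ — at a relative minimal model $(Y_{i+1},\Gamma_{i+1})$ over $W_i$, which by the construction of flips as relative canonical models of terminalisations (cf. \cite{bhatt2020globally+}) is a terminalisation of the flip $X_{i+1}$. Concatenating these finite MMPs gives the claimed infinite $(K_{Y_0}+\Gamma_0)$-MMP.

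It remains to prove that a terminal threefold $R$-pair $(Y,\Gamma)/S$ admits no infinite sequence of flips; this simultaneously supplies the termination used above and closes the argument. As in the first paragraph there are only finitely many divisorial contractions, and by \autoref{partial-termination} we may assume each flipping locus is not contained in $\operatorname{Supp}\Gamma_i$, so the contracted ray is $K_{Y_i}$-negative and the flip coincides with the corresponding $K_{Y_i}$-flip. The difficulty lemma preceding \autoref{l-open-term} then gives $d(Y_0)>d(Y_1)>\cdots$, an infinite strictly decreasing sequence of non-negative integers, which is absurd.

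The delicate point is the lifting in the second paragraph: one must verify that the output of the relative MMP over $W_i$ really is a terminalisation of $X_{i+1}$, so that the construction iterates, and this rests on a relative semiampleness statement for $K_{Y_{i+1}}+\Gamma_{i+1}$ over the possibly non-$\mathbb{Q}$-Gorenstein scheme $W_i$ (a relative form of \autoref{bpt}). I expect this, rather than the difficulty bookkeeping, to be the main work. Alternatively, one could attack the klt case directly via \autoref{c-open-klt}, which yields an open $U\subseteq T$ over which every negative contraction is a horizontal divisorial contraction: then no flip of the sequence lies over $U$, so $X_{i,U}\cong X_{0,U}$ for all $i$ and all flips are concentrated over the proper closed subset $T\setminus U$, after which one proceeds by induction on $\dim T$.
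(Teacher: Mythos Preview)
Your main approach --- lift the $K$-flips through a terminalisation and bound by Shokurov difficulty --- is valid but a genuinely different route from the paper. The acknowledged gap (that the relative minimal model $(Y_{i+1},\Gamma_{i+1})$ over $W_i$ really maps to $X_{i+1}$) is fillable: since $Y_{i+1}\to W_i$ is birational, $K_{Y_{i+1}}+\Gamma_{i+1}$ is big and nef over $W_i$, so \autoref{bpt} applied with $L=2(K_{Y_{i+1}}+\Gamma_{i+1})$ gives semiampleness, and the relative canonical model is $X_{i+1}$. The paper, by contrast, never lifts to the terminal case in the klt step. After reducing to klt it invokes \autoref{c-open-klt} just as your alternative sketch does, so all flips lie over some divisor $D\subseteq T$; but the endgame is not induction on $\dim T$ --- that does not help when $\dim T=1$, since one would be reducing to termination over closed points, which is unavailable here. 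Instead the paper enlarges the boundary: $\Delta'=\Delta+tf^{*}D$ for small $t>0$ (or the numerical substitute from \autoref{num-pull} when $T$ is a surface that is not $\mathbb{Q}$-factorial) is still klt, the given flips are still $(K_{X}+\Delta')$-flips, and now every flipping locus is contained in $\operatorname{Supp}\Delta'$, so a second appeal to \autoref{partial-termination} finishes immediately. Your route gives a self-contained argument in the classical Shokurov style; the paper's buys brevity by recycling special termination against a vertical divisor, and this trick is the one idea your alternative sketch is missing.
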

	
	\begin{proof}
		
		It is enough to show there is no infinite sequence of flips. Note that \autoref{partial-termination} ensures that the flipping and flipped curves are eventually disjoint from $\lfloor \Delta \rfloor$. Therefore, replacing $\Delta$ with $\Delta-\lfloor \Delta \rfloor$, we may assume $(X,\Delta)$ is klt. 
		
		By \autoref{c-open-klt}, there is always some divisor $D$ on $T$ such that all the flips take place over $D$.	If $T$ is $\mathbb{Q}$-factorial then $(X,\Delta'=\Delta+tf^{*}D)$ is klt for small $t>0$ and a $K_{X}+\Delta$ MMP is also a $K_{X}+\Delta'$ MMP. Since all the flips are contained in the support of $\Delta'$ the sequence must terminate. Otherwise we must have $\dim T =2$ so we use \autoref{num-pull} in place of pulling back $D$ and conclude exactly as above.
		
	\end{proof}

	\section{Relatively Log Terminal Pairs} \label{rlt-section}

	He	Here we introduce relatively log terminal pairs, which are essentially pairs which are replaceable by a klt pair locally over the base, and verify that the main results of the MMP extends to this setting. A suitable Bertini type theorem is also established. In this section $T$ will always be positive dimensional, in any case the results would be superfluous if $T$ were the spectrum of a field.
	
	\begin{definition}
		We say an $R$-pair $(X,\Delta)/T$ is relatively log terminal (rlt) (resp. relatively log canonical (rlc)) if there is a finite open cover $U_{i}$ of $T$ such that on each $X_{i}=U_{i} \times X$ we have $(K_{X}+\Delta)|_{U_{i}} \sim_{\mathbb{R}} K_{X_{i}}+\Delta_{i}$ where $(X_{i},\Delta_{i})$ is a klt (resp. lc) pair. In this case we say that $(X,\Delta)$ is witnessed by $(X_{i},\Delta_{i})$. We also sometimes say $\Delta$ is witnessed over $U_{i}$. 
		
		If $S \subseteq WDiv(X)$ then we say $(X,\Delta)$ is rlt (resp. rlc) with witnesses in $S$ if $\Delta_{i} \in S|_{U_{i}}$ for each $i$ for some choice of witnesses.
	\end{definition}
	\begin{remark}
		$T$ is always quasi-compact so this is equivalent to asking for $K_{X}+\Delta \sim K_{X_{p}}+\Delta_{p}$ with $(X_{p},\Delta_{p})$ klt for each $p \in T$ where $X_{p}=X \times T_{p}$ for $T_{p}$ the localisation at $p$.
	\end{remark}
	
	Being rlt can be quite a sensitive condition. In particular it's not true that if $B \leq B'$ and $(X,B')$ is rlt that $(X,B)$ must be rlt. For example, for any choice of $B$ and sufficiently ample $H$, on $X$ klt and $\mathbb{Q}$-factorial, we have that $(X,B+H)$ is rlt, though $B$ might not be. It fits well in the context of polytopes however as if $B_{i}$ are rlt then so is $\sum_{1}^{n} \lambda_{i}B_{i}$ for any choices of $\lambda_{i} \geq 0$ with $\sum \lambda_{i} \leq 1$.
	
	The pseudo-effective cone is the closure of the big cone, and $D$ is big if and only if its pullback to the generic fibre of $X \to T$ is. Hence if $U_{i}$ is any open cover of $T$, then $D$ is pseudo-effective if and only if $D|_{U_{i}}$ is. In particular an rlc pair is pseudo-effective (resp. big) if and only if its witnesses are.
	
	\begin{definition}
		Let $\phi:X \dashrightarrow Y$ be a birational contraction. Take a divisor $D$ and write $D'=\phi_{*}D$. 
		
		We say it is $D$-non-positive (resp. $D$-negative) if there is a common resolution $p:W \to X$, $q:W \to Y$ where 
		
		\[p^{*}D=q^{*}D'+E\]
		and $E \geq 0$ is $q$ exceptional (resp. $E \geq 0$ is $q$ exceptional and contains the strict transform of every $\phi$ exceptional divisor in its support). 
		
		If $(X,\Delta)$ is a pseudo-effective lc pair then $\phi$ is a weak log canonical (wlc) model if $\phi$ is $K_{X}+\Delta$ non-positive with $K_{Y}+\Delta_{Y}$ nef, where $\Delta_{Y}=\phi_{*}\Delta$. As $\phi$ is non-positive $(Y,\Delta_{Y})$ is always lc and if $(X,\Delta)$ is klt then so is $(Y,\Delta_{Y})$. 
		
		If in fact $\phi$ is $K_{X}+\Delta$ negative, $Y$ is $\mathbb{Q}$-factorial, and $(Y,\Delta_{Y})$ is dlt then $\phi$ is a log terminal model. Again if $(X,\Delta)$ is dlt then the dlt condition on $(Y,\Delta_{Y})$ is automatic as $\phi$ is negative. If $K_{Y}+\Delta_{Y}$ is semiample then $\phi$ is said to be a good log terminal model.
		
		If instead $\phi:X \dashrightarrow Y$ is a rational map then it is an ample model for $D$ if there is $H$ ample on $Y$ such that $p^{*}D\sim_{\mathbb{R}}q^{*}H+E$ where $E \geq 0$ is such that $E \leq B$ for any $p^{*}D \sim_{\mathbb{R}} B \geq 0$.
	\end{definition}

	Note that ample models are unique. Indeed if $X \dashrightarrow Y$ and $X \dashrightarrow Z$ are two ample models, then on some common resolution $W$ of both maps we have $f:W \to Y$, $g:W \to Z$ and $h:W \to X$. Now there are ample divisors $A_{Y}$, $A_{Z}$ with $f^{*}A_{Y}+E_{Y}\sim_{\mathbb{R}}h^{*}D \sim_{\mathbb{R}}g^{*}A_{Z}+E_{Z}$. But by definition $E_{Z}=E_{Y}$ and hence $f^{*}A_{Y}\sim_{\mathbb{R}}g^{*}A_{Z}$, so there is an isomorphism $i:Z \to Y$ with $i \circ f= g$ as required.
	
	If $(X,\Delta)$ is a pair then we say $\phi:X \dashrightarrow Y$ is an ample model of $(X,\Delta)$ if it is an ample model for $K_{X}+\Delta$. We can often replace pairs with linearly equivalent versions.
	
	\begin{lemma}\label{equiv}\cite[Lemma 3.6.8]{birkar2010existence}
		Let $\phi:X \to Y$ be a rational map. Suppose $(X,\Delta)$ and $(X,\Delta')$ are two pairs and $D,D'$ two $\mathbb{R}$-Cartier divisors on $X$. Take $t >0$ a positive real number.
		\begin{itemize}
			\item If $D \equiv tD'$ and $\phi_{*}D$, $\phi_{*}D'$ are both $\mathbb{R}$-Cartier then $\phi$ is $D$ negative (resp $D$ non-negative) if and only if it is $D'$ negative (resp. non-negative)
			\item If both pairs are lc and $K_{X}+\Delta \sim_{\mathbb{R}} t(K_{X}+\Delta')$ then $\phi$ is a wlc model for $(X,\Delta)$ if and only if it is a wlc model for $(X,\Delta')$.
			\item If both pairs are klt and $K_{X}+\Delta \equiv t(K_{X}+\Delta')$ then $\phi$ is a log terminal model for $(X,\Delta)$ if and only if it is a log terminal model for $(X,\Delta')$.
			\item If $D\sim_{\mathbb{R}} tD$ then $\phi$ is an ample model for $D$ if and only if it is an ample model for $D'$.
		\end{itemize}
	\end{lemma}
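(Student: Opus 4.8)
The plan is to prove each of the four bullet points by unwinding the relevant definitions on a common resolution and observing that each is insensitive to $\mathbb{R}$-linear equivalence and to positive scaling. First I would fix notation: given $\phi:X\dashrightarrow Y$, choose a common resolution $p:W\to X$, $q:W\to Y$ resolving $\phi$; all the defining conditions (non-positivity, negativity, wlc model, log terminal model, ample model) are phrased in terms of the divisor $p^{*}(-)-q^{*}\phi_{*}(-)$ on $W$, so it suffices to analyse how this quantity behaves under the hypotheses.

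For the first bullet, if $D\equiv tD'$ then $p^{*}D\equiv t\,p^{*}D'$ and $q^{*}\phi_{*}D\equiv t\,q^{*}\phi_{*}D'$, so $p^{*}D-q^{*}\phi_{*}D\equiv t(p^{*}D'-q^{*}\phi_{*}D')$; since $q$-exceptionality of an effective divisor and the condition of containing the strict transforms of the $\phi$-exceptional divisors are numerical (indeed support-level) conditions, and $t>0$, the divisor $E$ witnessing $D$-negativity scales to one witnessing $D'$-negativity and conversely. For the second and third bullets one additionally has to track the nef (resp.\ dlt, $\mathbb{Q}$-factorial) requirement: if $K_{X}+\Delta\sim_{\mathbb{R}}t(K_{X}+\Delta')$ then $\phi_{*}$ of both sides are $\mathbb{R}$-linearly equivalent up to the factor $t$, so $K_{Y}+\Delta_{Y}$ is nef iff $K_{Y}+\Delta'_{Y}$ is; the non-positivity (resp.\ negativity) transfers by the first bullet applied to $D=K_{X}+\Delta$, $D'=K_{X}+\Delta'$; and $\mathbb{Q}$-factoriality and dlt-ness of $(Y,\Delta_{Y})$ versus $(Y,\Delta'_{Y})$ are handled by the remark already recorded in the text that non-positivity (resp.\ negativity) forces the pushed-forward pair to be lc (resp.\ dlt) when the source is. For the ample model bullet, if $D\sim_{\mathbb{R}}tD'$ then the ample divisor $H$ on $Y$ and the minimality property of $E$ in $p^{*}D\sim_{\mathbb{R}}q^{*}H+E$ rescale by $t$: $H/t$ is still ample and $E/t$ is still minimal among effective divisors $\mathbb{R}$-linearly equivalent to $p^{*}D'$, since $B\mapsto tB$ is a bijection between the relevant sets of effective representatives.

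The only genuinely delicate point is bookkeeping in the third bullet, where the hypothesis is merely numerical equivalence $K_{X}+\Delta\equiv t(K_{X}+\Delta')$ rather than $\mathbb{R}$-linear equivalence: here one cannot literally rescale a divisor, but the log terminal model conditions---$\phi$ being $(K_{X}+\Delta)$-negative, $K_{Y}+\Delta_{Y}$ nef, $(Y,\Delta_{Y})$ $\mathbb{Q}$-factorial dlt---are all either numerical or automatic from negativity, so the first bullet already does the work and no linear equivalence is needed. I expect this to be the main (minor) obstacle: making sure that in each clause exactly the right equivalence is used and that the dlt/$\mathbb{Q}$-factorial hypotheses are seen to be preserved without extra input. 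Since this is precisely \cite[Lemma 3.6.8]{birkar2010existence}, I would ultimately just cite that reference and include at most the short numerical-scaling argument above.
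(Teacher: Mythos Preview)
The paper gives no proof of this lemma at all: it is stated with the citation to \cite[Lemma~3.6.8]{birkar2010existence} and immediately used, so your plan to cite that reference is exactly what the paper does. Your accompanying sketch is essentially correct; the one imprecision is in the first bullet, where you assert that effectiveness of $E$ is a ``numerical (indeed support-level) condition''---in general it is not, and what actually makes the argument work is that $E_{1}:=p^{*}D-q^{*}\phi_{*}D$ and $E_{2}:=p^{*}D'-q^{*}\phi_{*}D'$ are both automatically $q$-exceptional with $E_{1}\equiv_{Y} tE_{2}$, so the negativity lemma forces $E_{1}=tE_{2}$ as divisors, after which effectiveness and support transfer trivially.
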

	
	In particular these definitions extend naturally to rlt pairs as follows.
	
	\begin{definition}
		Let $\phi:X \dashrightarrow Y$ be a rational map. If $U_{i}$ is an open cover of $T$ we write $\phi_{i}:X_{i}\dashrightarrow Y_{i}=Y\times U_{i}$.
		If $(X,\Delta)$ is a pseudo-effective rlc pair witnessed by $(X_{i},\Delta_{i})$ then $\phi$ is a weak log canonical (wlc) model of $(X,\Delta)$ if $\phi_{i}$ is an $(X,\Delta_{i})$ wlc model for each $i$. Equally if $(X,\Delta)$ is rlt then $\phi$ is a log terminal model of $(X,\Delta)$ if and only if each $\phi_{i}$ is a log terminal model of $(X_{i},\Delta_{i})$.
	\end{definition}
	
	By Lemma \ref{equiv} these definitions are independent of the choice of witnesses. In particular if $(X,\Delta)$ is lc then the definition of wlc models agrees with usual one, equally if it is klt then the definition of log terminal model is unchanged.
	
	\begin{remark}
		The usual definition of ample model works here with no modification, it is equivalent to asking for it to be an ample model for the witnesses.
	\end{remark}
	
	\begin{lemma}\label{bertini}
		Let $(X,\Delta)/T$ be an rlt $R$-pair. Take $A\geq 0$ big and nef, then $(X,\Delta+A)$ is rlt. Moreover if $D$ is a divisor on $X$ sharing no components with the augmented base locus $\BS(A)$ nor any witness of $(X,\Delta)$ then we may assume no witness of $(X,\Delta+A)$ shares a component with $D$.
	\end{lemma}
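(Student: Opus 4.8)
The plan is to produce, for each point $p\in T$, a klt witness of $(X,\Delta+A)$ over $T_{p}:=\operatorname{Spec}\mathcal O_{T,p}$; by the Remark following the definition of rlt this shows $(X,\Delta+A)$ is rlt, and since $T$ is quasi-compact, finitely many of the resulting open neighbourhoods cover $T$. Fixing $p$, restricting a suitable member of a cover witnessing $(X,\Delta)$ gives a klt witness $(X_{p},\Delta_{p})$ of $(X,\Delta)$ over $T_{p}$, which under the hypothesis of the addendum can be chosen so that $\Delta_{p}$ shares no component with $D$. Since $(K_{X}+\Delta+A)|_{X_{p}}\sim_{\mathbb R}K_{X_{p}}+\Delta_{p}+A|_{X_{p}}$, it then suffices to find an effective $\mathbb R$-Cartier divisor $A_{p}\sim_{\mathbb R}A|_{X_{p}}$ with $(X_{p},\Delta_{p}+A_{p})$ klt and, for the addendum, with no component of $A_{p}$ contained in $D$, so that $\Delta_{p}+A_{p}$ avoids $D$.

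First I would reduce to the case $A$ ample over $T_{p}$. As $A$ is big over $T$, Kodaira's lemma gives $A\sim_{\mathbb R}H_{0}+N_{0}$ with $H_{0}$ ample over $T$ and $N_{0}\geq 0$, and for the addendum $N_{0}$ can be chosen with no component in $D$: each component of $D$ avoids $\BS(A)=\BB(A-\epsilon H_{0})$ for small $\epsilon>0$ and hence lies off the stable base locus of the big divisor $A-\epsilon H_{0}$, so — a sufficiently divisible multiple of $A-\epsilon H_{0}$ having, by its bigness over $T$, many more sections than the finitely many proper linear subspaces of sections vanishing along a component of $D$ can cover — a single effective representative of $A-\epsilon H_{0}$ misses every component of $D$. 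Now the hypothesis that $A$ is nef enters: iterating $A\sim_{\mathbb R}\bigl(\tfrac12 A+\tfrac12 H_{0}\bigr)+\tfrac12 N_{0}$, whose first term is ample over $T$ because nef plus ample is ample, gives $A\sim_{\mathbb R}H'+B$ with $H'$ ample over $T$ and $B\geq 0$ supported on $N_{0}$ with arbitrarily small coefficients. Replacing $\Delta_{p}$ by $\Delta_{p}+B|_{X_{p}}$ — still klt for $B$ small, and still sharing no component with $D$ — and $A$ by $H'$, I may assume $A$ is ample over $T_{p}$.

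It then remains, with $A$ ample over $T_{p}$, to find an effective $A_{p}\sim_{\mathbb R}A|_{X_{p}}$ with $(X_{p},\Delta_{p}+A_{p})$ klt and with no component in $D$: a Bertini statement over the local base $T_{p}$. After a small rational perturbation $A$ is an ample $\mathbb Q$-Cartier divisor over $T_{p}$; fix a log resolution of $(X_{p},\Delta_{p})$. For $N$ large and divisible, $NA|_{X_{p}}$ is very ample over $T_{p}$, and if $G\in\lvert NA|_{X_{p}}\rvert$ avoids $D$ and has multiplicity at most one along the finitely many centres of the resolution, then $(X_{p},\Delta_{p}+\tfrac1N G)$ is klt and $A_{p}:=\tfrac1N G$ is as required. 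Such $G$ form a dense open subset $W$ of the projective space $(\mathbb P^{M})^{\vee}$ of hyperplanes over $T_{p}$, and $W$ is nonempty over the closed point $p$, since a general hyperplane section of the geometric fibre over $p$ is smooth and misses $D$; hence $W_{p}$ is a nonempty open of $(\mathbb P^{M})^{\vee}_{\kappa(p)}$, which has a $\kappa(p)$-point — by a general choice when $\kappa(p)$ is infinite, and by Bertini over finite fields (with $N$ large) when $\kappa(p)$ is finite. Finally, as $T_{p}$ is local, any lift of this $\kappa(p)$-point to a $T_{p}$-point of $(\mathbb P^{M})^{\vee}$ automatically lands in $W$ — the preimage of its complement is a closed subset of $T_{p}$ missing the closed point, hence empty — and this gives the required $G$.

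I expect the main obstacle to be exactly this Bertini input together with the need to argue over one point of $T$ at a time: there is no linear system on $X$ over $R$ whose general member furnishes a single global klt boundary — the absence of such a Bertini theorem is precisely what the rlt formalism exists to sidestep — so one cannot simply spread one construction out from the generic fibre of $X\to T$, but must instead run the argument over each local ring $\mathcal O_{T,p}$ and reassemble the local witnesses. This is legitimate only because rlt asks for no more than a finite open cover of $T$ rather than a boundary on all of $X$, and because the residue field of every point of $T$ is either infinite, where a general hyperplane section suffices, or finite, where Bertini over finite fields supplies one.
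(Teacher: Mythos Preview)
Your strategy coincides with the paper's: reduce to $A$ ample by absorbing a small effective piece into $\Delta$, localise at each $p\in T$, construct a klt witness over $T_{p}$, and reassemble via quasi-compactness. The difference lies in the ample step. The paper passes to a log resolution $\pi\colon Y\to X_{P}$ of $(X_{P},\Delta_{P}+D)$, chooses $E\geq 0$ exceptional with $-E$ relatively ample so that $\pi^{*}A|_{X_{P}}-E$ is ample on the regular scheme $Y$, and then invokes \cite[Theorem~2.11]{bhatt2020globally+} directly on $Y$ to produce an effective $A'\sim\pi^{*}A|_{X_{P}}-E$ with $(Y,\Delta'+A'+E)$ klt and $(Y,\Delta'+A'+E+D')$ lc; pushing forward yields the witness on $X_{P}$, and the lc condition forces $A'$ off $D'$. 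You instead look for $G\in|NA|_{X_{p}}|$ on the singular $X_{p}$ and argue via the parameter space of hyperplane sections and a lifting from $\kappa(p)$.

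Your route is workable but as written has two soft spots. First, ``multiplicity at most one along the finitely many centres of the resolution'' is not by itself sufficient for $(X_{p},\Delta_{p}+\tfrac1N G)$ to be klt: you also need the strict transform of $G$ on $Y$ to be smooth and simple normal crossing with the existing boundary, else your fixed log resolution need not resolve the new pair. Second, the finite-residue-field case is not settled by gesturing at ``Bertini over finite fields'': a nonempty open in $(\mathbb{P}^{M})^{\vee}_{\kappa(p)}$ need not have a $\kappa(p)$-point, and Poonen-type statements give positive density for large $N$ rather than generic behaviour, so you must name the version used and check it covers the simultaneous conditions (smoothness of $\pi^{*}G$, transversality to the boundary, avoidance of $D$). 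The paper's choice to work on the regular $Y$ and cite a packaged Bertini result sidesteps both issues; if you prefer your direct argument, sharpen the defining condition of $W$ and pin down the Bertini input explicitly.
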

	\begin{proof}
		Write $A \sim A'+E$ for $A'$ ample and $E \geq 0$. We may assume $E$ is arbitrarily small, by writing $A\sim \delta A' + (1-\delta)A=\delta E$ and replacing $A'$ with $\delta A' + (1-\delta)A$. Thus we may suppose $(X,\Delta+E)$ is rlt such that no witnesses shares a component with $D$ and reduce to the case $A$ is ample.
		
		Pick a point $P \in T$ and localise. Write $X_{P}=X \times T_{P}$, $\Delta_{P}$ for the witness over $P$ and $D_{P}$ for the restriction of $D$. Let $\pi:Y \to X_{P}$ be a log resolution of $(X_{P},\Delta_{p}+D)$. Let $D'=\text{Supp}(\pi^{-1}_{*}D)$ and take $-$ effective, exceptional and anti-ample over $X_{p}$. So $A'=\pi^{*}A_{p}-E$ is ample. Write $K_{Y}+\Delta'=\pi^{*}(K_{X_{p}}+\Delta)$.
		
		By \cite[Theorem 2.11]{bhatt2020globally+} we can choose $A'\geq 0$ with $(Y,\Delta'+A'+E)$ klt and $(Y,\Delta'+A'+E+D')$ lc. In particular this choice of $A'$ cannot share a component with $D'$. Now $(X_{P},\Delta_{P}+\pi_{*}A')$ is klt and $\pi_{*}A'$ shares no components with $D$. Then this pair lifts to klt pair over some neighbourhood of $p$. The result follows by quasi-compactness.
	\end{proof}

	The MMP for these pairs lifts naturally from the klt case. We work in the setting of \cite{bhatt2020globally+}, however the rlt (resp. rlc) case always follows from corresponding results for klt (resp. lc) pairs.
	
	\begin{theorem}[rlc Cone Theorem]
		Let $(X,\Delta)$ be an rlc $\mathbb{Q}$-factorial threefold pair $R$-pair with $\mathbb{R}$ boundary. Then there is a countable collection of curves $\{C_{i}\}$ on $X$ such that:
		\begin{enumerate}
			\item $$\overline{NE}(X/U)=\overline{NE}(X/U)_{K_{Y}+\Delta \geq 0} + \sum_{i} \mathbb{R}[C_{i}]$$
			\item The rays $C_{i}$ do not accumulate in $(K_{Y}+\Delta)_{<0}$.
			\item There is an integer $M$ such that for each $i$ there is $d_{C_{i}}$ with 
			\[0 < -(K_{X}+\Delta).C_{i} \leq Md_{c_{i}}\]
			and $d_{C_{i}}$ divides $L\cdot_{k}C_{i}$ for every Cartier divisor $L$ on $X$.
		\end{enumerate}
	\end{theorem}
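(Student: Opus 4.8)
The plan is to deduce the statement from the lc Cone Theorem \autoref{cone-thm} by working over a finite open cover of $T$. Since $T$ is quasi-compact I would first fix a finite open cover $U_{1},\dots,U_{r}$ of $T$ (discarding any empty members) witnessing the rlc hypothesis, so that on each $X_{j}:=X_{U_{j}}$ there is a boundary $\Delta_{j}$ with $(K_{X}+\Delta)|_{X_{j}}\sim_{\mathbb{R}}K_{X_{j}}+\Delta_{j}$ and $(X_{j},\Delta_{j})$ lc. Each $X_{j}$ is a nonempty open subscheme of $X$, hence normal, integral and $\mathbb{Q}$-factorial, $X_{j}\to U_{j}$ is again a projective contraction, and $U_{j}$ is integral, normal and quasi-projective over $R$; thus $(X_{j},\Delta_{j})/U_{j}$ is an lc $\mathbb{Q}$-factorial threefold $R$-pair, and \autoref{cone-thm} supplies a countable family $\{C_{j,k}\}_{k}$ of curves on $X_{j}$ with the three listed properties, the length bound there carrying the uniform constant $4$. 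Each $C_{j,k}$ is proper, hence a genuine closed curve on $X$, and the countable collection $\{C_{j,k}\}_{j,k}$ will serve as the family $\{C_{i}\}$.

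Next I would set up the comparison of cones. Two elementary points do the work: (i) every irreducible curve contracted by $X\to T$ maps to a point lying in some $U_{j}$, so it is a curve in $X_{j}$ contracted over $U_{j}$; and (ii) for a proper curve $C\subseteq X_{j}$ one has $(K_{X_{j}}+\Delta_{j})\cdot C=(K_{X}+\Delta)\cdot C$, because $\mathbb{R}$-linearly equivalent $\mathbb{R}$-Cartier divisors have equal degree on a proper curve and intersection numbers with proper curves are computed locally. From (i), the open immersions $X_{j}\hookrightarrow X$ induce maps $\iota_{j}\colon N_{1}(X_{j}/U_{j})\to N_{1}(X/T)$ for which the classes of contracted curves on the $X_{j}$ generate $NE(X/T)$; from (ii), each $\iota_{j}$ carries $\overline{NE}(X_{j}/U_{j})$ into $\overline{NE}(X/T)$ and the $(K_{X_{j}}+\Delta_{j})$-nonnegative part into the $(K_{X}+\Delta)$-nonnegative part. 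Pushing forward the cone decomposition of each $(X_{j},\Delta_{j})$ and summing over the finitely many $j$ gives
\[
NE(X/T)\ \subseteq\ \overline{NE}(X/T)_{K_{X}+\Delta\geq 0}+\sum_{j,k}\mathbb{R}_{\geq 0}[C_{j,k}]\ \subseteq\ \overline{NE}(X/T),
\]
so the closure of the middle cone is $\overline{NE}(X/T)$. Property (3) is then immediate from (ii) together with the fact that the restriction to $X_{j}$ of a Cartier divisor on $X$ is Cartier: one may take $M=4$ and $d_{C_{j,k}}$ as supplied by \autoref{cone-thm} for $(X_{j},\Delta_{j})$. Property (1), in the stated form where the displayed sum is already closed, is then the standard consequence of property (2), since only finitely many of the $C_{j,k}$ contribute a ray meeting any given compact subset of $(K_{X}+\Delta)_{<0}$.

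The step that needs genuine care, and which I expect to be the main obstacle, is property (2). The difficulty is that $\iota_{j}$ need not be injective, so a non-accumulation statement in $N_{1}(X/T)$ does not transport automatically to $N_{1}(X_{j}/U_{j})$. I would argue by contradiction: suppose infinitely many distinct rays $\mathbb{R}_{\geq 0}[C_{j,k}]$ accumulate at a ray $\mathbb{R}_{\geq 0}\gamma$ with $(K_{X}+\Delta)\cdot\gamma<0$; by the pigeonhole principle these rays may be taken to share a single index $j$. Fix a divisor $H$ on $X$ ample over $T$; since $\gamma\neq 0$ we have $H\cdot\gamma>0$, and normalising the accumulating rays by $H$ produces classes $c_{j,k}:=[C_{j,k}]/(H\cdot C_{j,k})$ with $c_{j,k}\to\gamma/(H\cdot\gamma)$ in $N_{1}(X/T)$. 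Regarded in $N_{1}(X_{j}/U_{j})$, each $c_{j,k}$ has degree $1$ against the relatively ample divisor $H|_{X_{j}}$, so it lies in a compact slice of $\overline{NE}(X_{j}/U_{j})$ by Kleiman's criterion; passing to a subsequence, $c_{j,k}\to\delta\in\overline{NE}(X_{j}/U_{j})$ with $\iota_{j}(\delta)\in\mathbb{R}_{>0}\gamma$, whence $(K_{X_{j}}+\Delta_{j})\cdot\delta<0$ by (ii). But the rays $\mathbb{R}_{\geq 0}[C_{j,k}]$ are distinct already in $N_{1}(X_{j}/U_{j})$ (they have distinct images), and they accumulate at $\mathbb{R}_{\geq 0}\delta$ — contradicting \autoref{cone-thm}(2) for $(X_{j},\Delta_{j})$. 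With (2) in hand the bookkeeping needed to finish (1) is routine, completing the argument.
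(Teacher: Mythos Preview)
Your proposal is correct and follows essentially the same strategy as the paper: fix a finite witnessing cover, apply the lc Cone Theorem on each piece, and assemble. The only notable difference in execution is that the paper shows directly, via an extremality argument (decomposing a generator $r$ of an extremal ray as $r_{i}+(r-r_{i})$ with both summands pseudo-effective), that every $(K_{X}+\Delta)$-negative extremal ray on $X$ is already one of the $\gamma_{i,j}$ coming from some $X_{i}$, whereas you simply push forward the cone decompositions and take closures; conversely, your treatment of non-accumulation (normalising by a relatively ample $H$ and using compactness of the slice in $\overline{NE}(X_{j}/U_{j})$) is more explicit than the paper's one-line pigeonhole remark.
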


	\begin{proof}
	For ease of notation we will often view cycles on $X_{i}$ as cycles on $X$ without renaming.
	
	Suppose that $(X,\Delta)$ has witnesses $(X_{i}=X \times U_{i}, \Delta_{i})$ for some open cover $U_{i}$ of $T$. Then $U_{i}$ is still quasi-projective over $R$ and the Cone Theorem holds for each $(X_{i},\Delta_{i})$. Let $\gamma_{i,j}$ be the $K_{X_{i}}+\Delta_{i}$ negative extremal curves. These are also $K_{X}+\Delta$ negative, though they need not be extremal on $X$.
	
	Suppose now that $R$ is a $K_{X}+\Delta$ negative extremal ray. Let $r\in R$ be a non-zero cycle. Then $r$ is the limit of some effective cycles $r^{k}$. We write $r_{i}^{k}$ for the part of $r$ supported over $U_{i}$. Then $r_{i}=\lim r^{k}_{i}$ is still pseudo-effective, moreover $r-r_{i}=\lim r^{k}-r^{k}_{i}$ is also. Since $R$ is extremal we must have for each $i$ that either $r_{i}=0$ or $r=t_{i}r_{i}$ for some $t_{i} > 0$. There must be some $i$ with $r_{i} \neq 0$, else we would have $r=0$. However $r_{i}$ then generates an extremal $K_{X_{i}}+\Delta_{i}$ negative ray, hence $r=t_{i}r_{i}=t\gamma_{i,j}$ for some $j$ and some $t>0$. Thus the $\gamma_{i,j}$ generate all the $K_{X}+\Delta$ negative extremal rays. $(1)$ and $(3)$ follow immediately by \autoref{cone-thm}. Since there are finitely many $U_{i}$ if the rays accumulated on $X$ we could chose a subsequence consisting of extremal rays coming from some $X_{i}$ which would then accumulate on $X_{i}$, thus $2$ also holds.
\end{proof}

	\begin{theorem}[rlt Basepoint Free Theorem]
		Let $(X,\Delta)$ be a $\mathbb{Q}$-factorial threefold rlt $R$-pair with $\mathbb{R}$-boundary. Let $L$ be a nef Cartier divisor over $T$ such that $L-(K_{X}+\Delta)$ is big and nef over $T$. Then $L$ is semiample.
	\end{theorem}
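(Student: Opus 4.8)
The plan is to deduce this from the klt Basepoint Free Theorem \autoref{bpt} by restricting over a witnessing cover and then gluing. Fix witnesses $(X_{i},\Delta_{i})$ of $(X,\Delta)$ over a finite open cover $\{U_{i}\}$ of $T$, so that $(K_{X}+\Delta)|_{U_{i}}\sim_{\mathbb{R}}K_{X_{i}}+\Delta_{i}$ with each $(X_{i},\Delta_{i})$ klt. Since $X_{i}=f^{-1}(U_{i})$ is open in the normal integral scheme $X$ and $X_{i}\to U_{i}$ is the base change of the projective contraction $f$, each $(X_{i},\Delta_{i})/U_{i}$ is again a klt $R$-pair of dimension $3$.

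Next I would check the hypotheses of \autoref{bpt} survive restriction. Set $L_{i}=L|_{X_{i}}$, a nef Cartier divisor over $U_{i}$ (nefness being inherited since the curves contracted by $X_{i}\to U_{i}$ form a subset of those contracted by $f$). Then $L_{i}-(K_{X_{i}}+\Delta_{i})\sim_{\mathbb{R}}\bigl(L-(K_{X}+\Delta)\bigr)|_{X_{i}}$, which is nef over $U_{i}$ for the same reason, and big over $U_{i}$: bigness over the base is detected on the generic fibre (as used just before \autoref{bertini}), and since $T$ is integral the nonempty open $U_{i}$ contains the generic point of $T$, so $X_{i}\to U_{i}$ has the same generic fibre as $f$, over which $L-(K_{X}+\Delta)$ is big by hypothesis. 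Hence \autoref{bpt} applies to $(X_{i},\Delta_{i})/U_{i}$ and $L_{i}$, producing $m_{i}>0$ with $\mathcal{O}_{X_{i}}(m_{i}L_{i})$ globally generated relative to $U_{i}$.

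Finally I would glue these into a statement over $T$. Let $m$ be a common multiple of the finitely many $m_{i}$; relative global generation is preserved under passing to multiples, so $\mathcal{O}_{X_{i}}(mL_{i})$ is globally generated over $U_{i}$ for every $i$. By flat base change along $U_{i}\hookrightarrow T$ one has $(f|_{X_{i}})_{*}\mathcal{O}_{X_{i}}(mL_{i})=(f_{*}\mathcal{O}_{X}(mL))|_{U_{i}}$, so the evaluation morphism $f^{*}f_{*}\mathcal{O}_{X}(mL)\to\mathcal{O}_{X}(mL)$ restricts over $f^{-1}(U_{i})$ to the corresponding evaluation morphism for $X_{i}\to U_{i}$, which is surjective. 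As the $f^{-1}(U_{i})$ cover $X$, this morphism is surjective, i.e.\ $mL$ is $f$-globally generated, so $L$ is semiample. There is no deep obstacle beyond this bookkeeping; the only point that genuinely needs care is the preservation of bigness under restriction, which is exactly where the integrality of $T$ enters through the generic fibre.
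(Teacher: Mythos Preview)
Your proof is correct and follows exactly the paper's approach: reduce to the klt case via the witnessing cover, using that semiampleness is local on the base and that bigness and nefness of $L-(K_{X}+\Delta)$ restrict to $L|_{X_{i}}-(K_{X_{i}}+\Delta_{i})$ over each $U_{i}$. You have simply spelled out the details (the generic-fibre argument for bigness, the common multiple, and the flat base change for gluing) that the paper leaves implicit in one line.
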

	
	\begin{proof}
		This is immediate from the klt case, \cite{bhatt2020globally+}[Theorem 9.26], since semi-ampleness is local on the base and if $L-(K_{X}+\Delta)$ is big and nef over $T$ then $L_{X_{i}}-(K_{X_{i}}+\Delta_{i})$ is big and nef over $U_{i}$ for each $i$.
	\end{proof}

	\begin{theorem}[Existence of rlt flips]
	Let $(X,\Delta)/T$ be a threefold rlt $R$-pair with $\mathbb{R}$-boundary. Suppose $X \to Y$ is a flipping contraction over $T$ then the flip $X \dashrightarrow X^{+}$ exists. 
\end{theorem}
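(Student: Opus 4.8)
The plan is to transfer the statement to the klt case over an open cover of $T$ and then glue. Recall first that the flip of $f\colon X\to Y$, if it exists, is the unique normal scheme $X^{+}$ equipped with a small projective morphism $f^{+}\colon X^{+}\to Y$ for which $K_{X^{+}}+\Delta^{+}$ is $\mathbb{R}$-Cartier and $f^{+}$-ample, where $\Delta^{+}$ denotes the strict transform of $\Delta$; each of these conditions is local on $Y$, hence local on $T$. So it is enough to construct the flip after base change to each member of a suitable open cover of $T$.

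Concretely, I would choose an open cover $\{U_{i}\}$ of $T$ witnessing $(X,\Delta)$ by klt pairs $(X_{i},\Delta_{i})$, and set $Y_{i}=Y\times_{T}U_{i}$ with $f_{i}\colon X_{i}\to Y_{i}$ the base change of $f$. Since $(K_{X}+\Delta)|_{X_{i}}\sim_{\mathbb{R}}K_{X_{i}}+\Delta_{i}$ and $-(K_{X}+\Delta)$ is $f$-ample, $-(K_{X_{i}}+\Delta_{i})$ is $f_{i}$-ample; and $f_{i}$ is small because its exceptional locus is the intersection of $X_{i}$ with that of $f$. Thus, whenever $f_{i}$ is not an isomorphism, it is a flipping contraction for the klt threefold $R$-pair $(X_{i},\Delta_{i})/U_{i}$, so the flip $\phi_{i}\colon X_{i}\dashrightarrow X_{i}^{+}$ over $Y_{i}$ exists by the klt MMP of \cite[Theorem F]{bhatt2020globally+}; when $f_{i}$ is an isomorphism set $X_{i}^{+}=X_{i}$.

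It remains to glue the $X_{i}^{+}$ over $Y$, and this is the step I expect to be the main obstacle. Over the overlap $Y\times_{T}(U_{i}\cap U_{j})$, both $X_{i}^{+}$ and $X_{j}^{+}$ restrict to a flip of the small contraction $X\times_{T}(U_{i}\cap U_{j})\to Y\times_{T}(U_{i}\cap U_{j})$ relative to the $\mathbb{R}$-Cartier divisor $K_{X}+\Delta$ — here one uses that $\sim_{\mathbb{R}}$-equivalent divisors share the same flip, via \autoref{equiv} — so uniqueness of flips produces a canonical isomorphism $\theta_{ij}$ between these two restrictions. On the dense open locus where $f$ is an isomorphism every $\theta_{ij}$ is the identity, and since the $X_{i}^{+}$ are integral this forces the cocycle condition $\theta_{ik}=\theta_{jk}\circ\theta_{ij}$. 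Hence the $X_{i}^{+}$ glue to a scheme $X^{+}$ with a projective morphism $f^{+}\colon X^{+}\to Y$; it is small and $K_{X^{+}}+\Delta^{+}$ is $\mathbb{R}$-Cartier and $f^{+}$-ample because these properties can be checked over the $U_{i}$, so $X\dashrightarrow X^{+}$ is the sought flip, and it is again rlt with witnesses $(X_{i}^{+},\Delta_{i}^{+})$.
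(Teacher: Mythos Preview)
Your proposal is correct and follows essentially the same approach as the paper: restrict to witnesses over an open cover, construct the local flips from the klt case (the paper cites \cite[Theorem 9.12]{bhatt2020globally+} rather than Theorem~F, but this is immaterial), and glue via uniqueness of flips. Your treatment of the gluing is in fact more careful than the paper's, which simply asserts that ``since flips are unique these $X_{i}^{+}$ glue''; your explicit discussion of the overlaps and the cocycle condition is a welcome addition.
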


\begin{proof}
	Let $\phi:X \to Y$ be a flipping contraction for an rlt pair $(X,\Delta)$. Suppose $(X,\Delta)$ is witnessed by $(X_{i},\Delta_{i})$ and let $\phi_{i}:X_{i} \to Y_{i}$ be the induced morphism $U_{i}$. Then $\phi_{i}$ is either still a flipping contraction or an isomorphism. If $\phi_{i}$ is a flipping contraction, then the existence of flip $X_{i}^{+}$ is ensured by \cite{bhatt2020globally+}[Theorem 9.12], otherwise we take simply take $X_{i}^{+}=X_{i}$. Hence we have a suitable $X_{i}^{+}$ for each $i$. Since flips are unique these $X_{i}^{+}$ glue to a variety $X^{+}$ over $T$ such that $X \dashrightarrow X^{+}$ is the required flip.
\end{proof}

\begin{theorem}[Termination of rlt flips]\label{rlt-term}
	Let $(X,\Delta)/T$ be a threefold rlt $R$-pair with $\mathbb{R}$-boundary. Then any sequence of $(K_{X}+\Delta)$ flips terminates.
\end{theorem}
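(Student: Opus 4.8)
The plan is to deduce termination of rlt flips from termination for $\mathbb{Q}$-factorial dlt pairs (\autoref{termination}) by restricting a hypothetical infinite sequence over the open cover that witnesses the rlt structure. Suppose for contradiction that $(X,\Delta)=(X_{0},\Delta_{0}) \dashrightarrow (X_{1},\Delta_{1}) \dashrightarrow \cdots$ is an infinite sequence of $(K_{X}+\Delta)$ flips over $T$, with flipping contractions $\phi_{n} \colon X_{n} \to Z_{n}$. Fix a finite open cover $\{U_{i}\}$ of $T$ together with klt witnesses $(X_{0,i},\Delta_{0,i})$ of $(X_{0},\Delta_{0})$, and for any object over $T$ write a subscript $i$ for its base change to $U_{i}$.

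As in the proof of existence of rlt flips, restricting the $n$-th flip over $U_{i}$ yields either a flip or an isomorphism. Taking $\Delta_{n,i}$ to be the pushforward of $\Delta_{0,i}$, flips preserve kltness and $K_{X_{n,i}}+\Delta_{n,i} \sim_{\mathbb{R}} (K_{X_{n}}+\Delta_{n})|_{U_{i}}$, so $(X_{n,i},\Delta_{n,i})$ is again a klt witness of $(X_{n},\Delta_{n})$; by \autoref{equiv} the induced map $X_{n,i} \dashrightarrow X_{n+1,i}$, when not an isomorphism, is a $(K_{X_{n,i}}+\Delta_{n,i})$ flip whose flipping contraction is the restriction of $\phi_{n}$. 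Since $\phi_{n}$ contracts a curve it is not an isomorphism, and being an isomorphism is local on $Z_{n}$, so $\phi_{n}$ restricts to a nontrivial flipping contraction over some $U_{i(n)}$. As the cover is finite, the pigeonhole principle gives an index $i$ with $i(n)=i$ for infinitely many $n$; discarding the (harmless) isomorphism steps over $U_{i}$ leaves an infinite sequence of $(K_{X_{0,i}}+\Delta_{0,i})$ flips on the klt pair $(X_{0,i},\Delta_{0,i})/U_{i}$.

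If $X$, and hence each $X_{0,i}$, is $\mathbb{Q}$-factorial, this immediately contradicts \autoref{termination}. In general one first reduces to that case by the usual device: pass to a small $\mathbb{Q}$-factorialization $(\widetilde{X}_{0,i},\widetilde{\Delta}_{0,i}) \to (X_{0,i},\Delta_{0,i})$ and replace each flip $X_{n,i} \dashrightarrow X_{n+1,i}$ over $Z_{n,i}$ by a $(K+\widetilde{\Delta})$-MMP over $Z_{n,i}$ from the current $\mathbb{Q}$-factorial model; since the pullback of $-(K+\widetilde{\Delta})$ is relatively nef and big while the model is not isomorphic to $Z_{n,i}$, each such MMP is nonempty, and it is finite by \autoref{termination}, with output a $\mathbb{Q}$-factorial dlt model over $X_{n+1,i}$. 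Splicing these finite MMPs gives an infinite $(K+\widetilde{\Delta}_{0,i})$-MMP on the $\mathbb{Q}$-factorial dlt threefold pair $(\widetilde{X}_{0,i},\widetilde{\Delta}_{0,i})/U_{i}$, again contradicting \autoref{termination}. The soft parts (finiteness of the cover, the pigeonhole step, and the observation that a flipping contraction is nontrivial over at least one member of the cover) are immediate; the points needing care are the bookkeeping that the localized witnesses stay klt witnesses of the successive pairs along the sequence and, in the non-$\mathbb{Q}$-factorial case, that each downstairs flip is faithfully tracked by a nonempty finite MMP on the chosen $\mathbb{Q}$-factorialization so that \autoref{termination} applies.
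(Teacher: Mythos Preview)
Your proof is correct and follows essentially the same route as the paper: restrict a hypothetical infinite sequence of rlt flips over the finite open cover witnessing the rlt structure, observe that over each $U_{i}$ one obtains a sequence of klt flips interspersed with isomorphisms, and conclude by termination for klt pairs together with finiteness of the cover. The paper's proof is a two-line version of exactly this argument, invoking \autoref{termination} directly on each restricted sequence.

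You are more careful than the paper on one point: \autoref{termination} is stated for $\mathbb{Q}$-factorial dlt pairs, while the theorem here does not assume $\mathbb{Q}$-factoriality of $X$. The paper simply applies \autoref{termination} without comment, implicitly taking $\mathbb{Q}$-factoriality for granted (as it is in all the paper's applications, where flips arise from running the MMP in the $\mathbb{Q}$-factorial category). Your extra step of lifting each restricted flip to a finite MMP on a small $\mathbb{Q}$-factorialization is the standard fix and is correctly executed; it makes your argument cover the non-$\mathbb{Q}$-factorial case that the paper's terse proof does not literally address.
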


\begin{proof}
	
	 Let $f^{i}:X^{i} \to X^{i+1}$ be a sequence of flips from $X=X^{0}$ of an rlt pair $(K_{X}+\Delta)$. Then $(K_{X}+\Delta)$ is witnessed over some finite open cover $U_{j}$ and the restriction $f^{i}_{j}:X_{j}^{i} \to X_{j}^{i+1}$ is a sequence of flips and isomorphisms for the klt pair $(K_{X_{j}}+\Delta_{j})$ for each $j$. In particular for fixed $j$ the sequence eventually terminates by \autoref{termination}, but then as there are finitely many $j$, the global sequence $f^{i}$ also terminates.
\end{proof}

\begin{theorem}[MMP for rlt pairs]\label{rltmmp}
	Let $(X,\Delta)/T$ be a threefold rlt $R$-pair with $\mathbb{R}$-boundary, then we can run a $K_{X}+\Delta$ MMP. If $K_{X}+\Delta$ is pseudo-effective then this terminates with a good log terminal model, otherwise it ends in a Mori fibre space.
\end{theorem}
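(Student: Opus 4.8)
The plan is to run the MMP by feeding the four preceding results into the usual algorithm, to read off the two outcomes according to whether $K_X+\Delta$ is pseudo-effective, and finally to obtain goodness of the resulting minimal model by passing to the witnesses. As usual we may assume $X$ is $\mathbb{Q}$-factorial.

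First I would check that the MMP can be run. Suppose that at some stage we have a $\mathbb{Q}$-factorial rlt pair $(X',\Delta')/T$ with $K_{X'}+\Delta'$ not nef over $T$. The rlt Cone Theorem produces a $(K_{X'}+\Delta')$-negative extremal ray $R$; by the standard argument — choose a nef Cartier divisor over $T$ vanishing exactly on $R$ whose difference with $K_{X'}+\Delta'$ is big and nef over $T$, then invoke the rlt Basepoint-Free Theorem — this ray admits a contraction $c\colon X'\to W$ over $T$. If $c$ is of fibre type we stop, with a Mori fibre space; if $c$ is divisorial we replace $X'$ by $W$; if $c$ is flipping we replace $X'$ by the flip, which exists by Existence of rlt flips. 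In either birational case the new pair is again $\mathbb{Q}$-factorial, and it is again rlt: over each member of an open cover witnessing $(X',\Delta')$ the step restricts to a step of a klt MMP or to an isomorphism, so the push-forward boundary is witnessed by the resulting klt pairs. This already shows a $K_X+\Delta$ MMP can be run.

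Next comes termination and the dichotomy. A divisorial contraction strictly drops the relative Picard number over $T$, a fixed finite integer, so only finitely many occur, and between consecutive divisorial contractions only finitely many flips occur by Termination of rlt flips (\autoref{rlt-term}); hence the MMP stops after finitely many steps, either with $K_Y+\Delta_Y$ nef over $T$ or with a Mori fibre space. Moreover each step preserves pseudo-effectiveness of the log canonical class over $T$: a flip is an isomorphism in codimension one, while for a divisorial contraction $\pi$ the log canonical class of the target is the push-forward of that of the source, and conversely the source class is $\pi^{*}$ of the target class plus a non-negative exceptional divisor. On the other hand a fibre-type contraction $X'\to Z$ forces $K_{X'}+\Delta'$ to be non-pseudo-effective over $T$, since $Z$ dominates $T$ and $-(K_{X'}+\Delta')$ is ample over $Z$, so $K_{X'}+\Delta'$ is negative on a family of curves sweeping out the fibres of $X'\to T$. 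Thus if $K_X+\Delta$ is pseudo-effective the MMP never performs a fibre-type contraction, and so ends with $K_Y+\Delta_Y$ nef, i.e. with a log terminal model $\phi\colon X\dashrightarrow Y$ (each step being $(K+\Delta)$-negative and $Y$ being $\mathbb{Q}$-factorial); and if $K_X+\Delta$ is not pseudo-effective then $K_Y+\Delta_Y$ is not pseudo-effective, hence not nef, so the MMP ends with a Mori fibre space.

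Finally, when $K_X+\Delta$ is pseudo-effective I would upgrade the log terminal model $Y$ to a good one, that is, show $K_Y+\Delta_Y$ is semiample. Fixing witnesses $(X_i,\Delta_i)$ over a finite cover $\{U_i\}$ of $T$, the pair $Y_i=Y\times U_i$ is a log terminal model of the klt pair $(X_i,\Delta_i)$, which is pseudo-effective over $U_i$ because $K_X+\Delta$ is pseudo-effective over $T$; so $K_{Y_i}+\Delta_{Y_i}$ is nef over $U_i$, and in fact semiample by abundance for pseudo-effective klt threefold pairs over a positive-dimensional base (on the at most two-dimensional generic fibre this is the classical abundance theorem for surfaces and curves, and the relative statement is part of the MMP package of \cite{bhatt2020globally+}). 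Since semiampleness may be checked over the finite cover $\{U_i\}$, $K_Y+\Delta_Y$ is semiample, so $Y$ is a good log terminal model. The main obstacle here — the one ingredient not immediate from the results established above — is precisely this abundance statement for the klt witnesses; pinning it down, ultimately by reduction to the classical surface and curve case, is the part that requires real care.
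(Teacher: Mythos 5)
Your proposal follows the paper's proof exactly in structure: existence and termination of the MMP are assembled from the rlt Cone Theorem, rlt Basepoint Free Theorem, existence of rlt flips and \autoref{rlt-term}, and goodness of the log terminal model is obtained by localising over the witnessing cover to reduce semiampleness to the klt case. The one point to correct is your justification of the final abundance step: semiampleness of a nef $K_{Y_i}+\Delta_{Y_i}$ for a pseudo-effective klt threefold pair is \emph{not} part of the MMP package of \cite{bhatt2020globally+} (whose basepoint-free theorem needs $L-(K+\Delta)$ big and nef), nor does it follow quickly from classical surface abundance on the generic fibre; the paper invokes \cite[Theorem 1.1]{bernasconi2021abundance}, which is a genuinely separate result, for precisely this input. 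You correctly flagged this as the one ingredient requiring external care, so the gap is one of attribution rather than of logic.
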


\begin{proof}
	Existence of the claimed MMPs and their termination is immediate from the above results. Suppose then $\phi:X \dashrightarrow Y$ is a log terminal model, since semiampleness is checked locally over the base we can assume that $(X,\Delta)$ is klt. Then $K_{Y}+\Delta_{Y}$ is a good log terminal model by \cite[Theorem 1.1]{bernasconi2021abundance}.
\end{proof}

\section{RLT Polytopes}

	In this section we introduce rlt versions of Shokurov Polytopes and provide some key technical results for their usage in the proof of Finiteness of Minimal Models. In particular we show that $\mathcal{RL}_{A}(V)$ is in fact a rational polytope. In this section, as in \autoref{setup}, $R$ will always be an excellent ring with dualising complex, $T$ will be a positive dimensional, quasi-projective $R$ scheme and $X$ will always be an integral scheme admitting a projective contraction $X \to T$. All pairs will be considered as $R$ pairs over $T$.
	
	\begin{definition}
	Fix a $\mathbb{Q}$-divisor $A\geq 0$. Let $V$ be a finite dimensional, rational affine subspace of $WDiv_{\mathbb{R}}(X)$ containing no components of $A$. Such $V$ is called a coefficient space (for A).
	
	We have the following.
	\[V_{A}= \{A+B: B \in V\}\]
	\[\mathcal{L}_{A}(V)=\{\Delta=A+B \in V_{A}: (X,\Delta)/T \text{ is an lc pair}\}\]
	\[\mathcal{RL}_{A}(V)=\{\Delta=A+B \in V_{A}: (X,\Delta)/T \text{ is an rlc pair with witnesses in } V_{A}\}\]
	
	We call a polytope $C$ inside $\mathcal{RL}_{A}(V)$ rlt if it is rational and contains only boundaries of rlt pairs.
	
	If $C \subseteq \mathcal{RL}_{A}(V)$ is a rational polytope then we have
	\[\mathcal{E}(C)=\{\Delta \in C: K_{X}+\Delta \text{ is pseudoeffective}\}\]
	\[\mathcal{N}(C)=\{\Delta \in C: K_{X}+\Delta \text{ is nef}\}\]
	
	Given a birational contraction $\phi:X \dashrightarrow Y$ we also define
	\[\mathcal{W}_{\phi}(C)=\{\Delta \in \mathcal{E}(C): \phi \text{ is a weak log canonical (wlc) model of } (X,\Delta)\}\]
	and given a rational map $\psi:X \dashrightarrow Z$
	\[\mathcal{A}_{\phi}(C)=\{\Delta \in \mathcal{E}(C): \phi \text{ is the ample model of } (X,\Delta)\}\]
\end{definition}

	\begin{remark}
	
	As defined above, $\mathcal{RL}_{A}(V)$ is non-empty only when $(X,A)$ is log canonical. We might wish to allow $(X,A)$ to be rlc with fixed witnesses instead. This quickly becomes non-trivial because of the overlap of sets in the corresponding open cover.
	
	If we're interested in a pair $(X,A+B)$ where $(X,B)$ is rlt and $A$ is big and nef then for suitably small $t>0$, and some coefficient space $V$, we always have that $(X,tA+(1-t)A+B)$ is rlt with coefficients in $\mathcal{RL}_{tA}(V)$ by \autoref{bertini}. Moreover if we have finitely many such pairs, we can find $t,V$ suitable for all of them. This is normally enough in practice.
	
\end{remark}

We consider $X \to T$ to be part of the definition of $X$ and omit any mention of $T$ from the notation for rlt polytopes.
	
\begin{lemma}
	Take $A \geq 0$ and let $V$ be a coefficient space. Let $C \subseteq \mathcal{RL}_{A}(V)$ be a rational polytope. Then there is an open cover $U_{i}$ such that every $\Delta \in C$ is witnessed over $U_{i}$. If $C$ is an rlt polytope then we may choose $U_{i}$ such that every witness is klt.
\end{lemma}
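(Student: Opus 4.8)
The plan is to reduce everything to the finitely many vertices of $C$ and then to invoke convexity of the lc and klt conditions along a segment of boundaries with $\mathbb{R}$-Cartier log canonical class. Since $C$ is a rational polytope it equals the convex hull of finitely many $\mathbb{Q}$-divisors $\Delta_1,\dots,\Delta_n$, and by hypothesis each $\Delta_k\in\mathcal{RL}_A(V)$, so $(X,\Delta_k)$ is rlc with witnesses in $V_A$: there is a finite open cover $\{U^k_j\}_j$ of $T$ and divisors $\Delta^k_j\in V_A|_{U^k_j}$ with $(X\times U^k_j,\Delta^k_j)$ lc and $(K_X+\Delta_k)|_{U^k_j}\sim_{\mathbb{R}}K_{X\times U^k_j}+\Delta^k_j$. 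First I would merge these into one cover: let $\{U_i\}$ be the common refinement of the $n$ covers $\{U^k_j\}_j$, which is again a finite open cover of $T$, and for each $i$ and each $k$ restrict the chosen witness of $\Delta_k$ from the member $U^k_{j(i,k)}\supseteq U_i$ to get a divisor $\Delta^k_{(i)}\in V_A|_{U_i}$ witnessing $(X,\Delta_k)$ over $U_i$. Here I use that passing to an open subscheme preserves being lc and $\mathbb{R}$-linear equivalence, and that restriction maps the rational affine space $V_A$ into $V_A|_{U_i}$.

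Next, given an arbitrary $\Delta=\sum_k\lambda_k\Delta_k\in C$ with $\lambda_k\geq 0$ and $\sum_k\lambda_k=1$, I would propose $\Theta_i:=\sum_k\lambda_k\Delta^k_{(i)}$ as a witness of $(X,\Delta)$ over $U_i$. It lies in $V_A|_{U_i}$ because that set is affine, hence closed under convex (in particular affine) combinations; it satisfies $(K_X+\Delta)|_{U_i}=\sum_k\lambda_k(K_X+\Delta_k)|_{U_i}\sim_{\mathbb{R}}K_{X\times U_i}+\Theta_i$, so $K_{X\times U_i}+\Theta_i$ is in particular $\mathbb{R}$-Cartier; and $(X\times U_i,\Theta_i)$ is lc because, on a common log resolution of the finitely many pairs $(X\times U_i,\Delta^k_{(i)})$, every discrepancy satisfies $a(E,X\times U_i,\Theta_i)=\sum_k\lambda_k\, a(E,X\times U_i,\Delta^k_{(i)})\geq -1$. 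This proves the first assertion: the single cover $\{U_i\}$ witnesses every $\Delta\in C$, with witnesses in $V_A$.

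For the second assertion, assume $C$ is an rlt polytope, so each vertex $\Delta_k$ is the boundary of an \emph{rlt} pair. Then I would run the identical construction, but starting from \emph{klt} witnesses of the $\Delta_k$ (which now exist, though possibly not inside $V_A$), passing to the common refinement $\{U_i\}$ and forming the convex combinations $\Theta_i$; the same computation, now with strict inequalities $a(E,X\times U_i,\Delta^k_{(i)})>-1$, shows that each $(X\times U_i,\Theta_i)$ is klt. One may even refine once more against the cover produced in the first part, so that over the resulting cover every $\Delta\in C$ simultaneously admits a klt witness and a witness in $V_A$. The underlying fact that a convex combination of lc, resp. klt, boundaries in a fixed linear family is again lc, resp. klt, is exactly the convexity remark already used in the discussion of rlt pairs.

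I do not expect any serious mathematical obstacle: the only real content is the affine-linearity of discrepancies along the segment joining boundaries with $\mathbb{R}$-Cartier log canonical class, which is standard and already invoked in the paper. The point that needs care is the bookkeeping — verifying that the common refinement of the vertex covers remains finite and that restricting a witness from $U^k_j$ to a smaller open $U_i$ keeps it inside $V_A|_{U_i}$ while preserving lc/klt-ness and the $\mathbb{R}$-linear equivalence — all of which is routine.
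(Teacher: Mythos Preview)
Your proposal is correct and follows essentially the same approach as the paper: reduce to the finitely many vertices of $C$, pass to a common refinement of their witnessing covers, and then take convex combinations of the vertex witnesses to witness an arbitrary $\Delta\in C$. The paper's proof is simply a terser version of what you wrote, and your added discussion of discrepancies and the caveat that the klt witnesses in the rlt case need not lie in $V_A$ are both accurate elaborations.
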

\begin{proof}
	We can take the vertices $D_{i}$ of $C$. Then take witnesses $(X_{i,j}, B_{i,j})$ of $D_{i}$. Since there are finitely many $D_{i}$, we can assume that for all $i$ we have $X_{i,j}=X_{j}$ for some $X_{j}$ not depending on $i$, after taking intersections of combinations of the $X_{i,j}$ and renumbering as necessary. Now $C$ is the convex hull of the $D_{i}$ and $\Delta= \sum \lambda_{i}D_{i}$ has witnesses $\Delta_{j}= \sum \lambda_{i}B_{i,j}$ as required.
\end{proof}

Note that if $C$ is not an rlt polytope and $\Delta \in C$ is an rlt boundary, it might be that the above lemma gives only log canonical witnesses on each $U_{i}$. 

We will essentially only ever work with rational polytopes containing a klt boundary. Since the questions are always local we can normally assume these polytopes are simplices. By the following lemma, it is then enough to work with rlt polytopes.

\begin{lemma}\label{rlt-repl}
	Suppose $A$ is ample, $V$ is a coefficient space and that $C\subseteq \mathcal{RL}_{A}(V)$ is a rational simplex. If there is some boundary $B_{0} \in \mathcal{RL}_{A}(V)$ with $(X,B_{0})$ rlt, then there is an affine bijection $f:C \to C'$, where $C'$ is an rlt polytope inside $\mathcal{RL}_{A/2}(W)$ for some coefficient space $W$. Further $f, f^{-1}$ preserve rationality and $\mathbb{Q}$-linear equivalence.
\end{lemma}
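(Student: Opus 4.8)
The plan is to tilt every boundary in $C$ halfway toward $B_0$ and then halve the ample part, so that the rlt property of $B_0$ is averaged into each point of $C$ while the ample part passes from $A$ to $A/2$. Writing $\Delta = A+B$ and $B_0 = A+B_0'$ with $B,B_0'\in V$, I would define
\[ f(\Delta):=\tfrac12\Delta+\tfrac12 B_0-\tfrac12 A=\tfrac12 A+\tfrac12(B+B_0'). \]
This is an affine injection with inverse $\Delta'\mapsto 2\Delta'+A-B_0$, so $C':=f(C)$ is a simplex and $f\colon C\to C'$ a bijection; the ample part of $f(\Delta)$ is $A/2$ and its boundary part $\tfrac12(B+B_0')$ lies in $V$. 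I would take $W$ to be a rational coefficient space for $A/2$ (hence carrying no component of $A$) which contains all these boundary parts — possible since they already lie in $V$ — as well as the finitely many boundary parts of the witnesses built below, so that $C'\subseteq W_{A/2}$.

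The substance is to show that each $f(\Delta)$ is rlt with klt witnesses lying in $W_{A/2}$. By the preceding lemma on common open covers for a rational polytope, I may fix one finite cover $\{U_p\}$ of $T$ over which every $\Delta\in C$ has an lc witness $\Delta_p=A|_{U_p}+B_p$ (with $B_p$ the restriction of an element of $V$) varying affinely with $\Delta$, and over which $B_0$ has a klt witness $(B_0)_p$. Set $\Theta_p:=\tfrac12\Delta_p+\tfrac12(B_0)_p-\tfrac12 A$ over $U_p$. Then $K_{X_p}+\Theta_p\sim_{\mathbb R}(K_X+f(\Delta))|_{U_p}$; the boundary part of $\Theta_p$ lies in $W|_{U_p}$ by the choice of $W$; $\Theta_p\ge 0$ by a short check on coefficients (off $\operatorname{Supp}(A)$ because $\Delta_p,(B_0)_p\ge 0$, and along a component of $A$ because the $\tfrac12\Delta_p$- and $-\tfrac12 A$-contributions cancel there); and $(X_p,\Theta_p)$ is klt, because the discrepancy is affine in the boundary, so $a(E,X_p,\tfrac12\Delta_p+\tfrac12(B_0)_p)$ is the average of $a(E,X_p,\Delta_p)\ge -1$ and $a(E,X_p,(B_0)_p)>-1$ and hence exceeds $-1$ for every $E$, while subtracting the effective divisor $\tfrac12 A$ only raises discrepancies. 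So the $\Theta_p$ witness $f(\Delta)$ as an rlt pair with boundaries in $W_{A/2}$, and $C'$ is an rlt polytope inside $\mathcal{RL}_{A/2}(W)$.

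Finally, $f$ and $f^{-1}$ are of the form $\Theta\mapsto c\Theta+(\text{a fixed }\mathbb Q\text{-divisor})$ with $c\in\{\tfrac12,2\}$ — provided $B_0$ is a $\mathbb Q$-divisor, a routine reduction since the rlt locus of $\mathcal{RL}_A(V)$ is star-shaped about $B_0$ under averaging with points of $\mathcal{RL}_A(V)$ and therefore meets its rational points — so $f,f^{-1}$ preserve rationality and $\mathbb Q$-linear equivalence. The step I expect to be the main obstacle is the bookkeeping in the second paragraph: arranging a single rational coefficient space $W$ for $A/2$ together with compatible witnesses over one common cover so that every point of $C'$ really does have a klt witness lying in $W_{A/2}$, in particular so that the boundary parts of those witnesses carry no component of $A$. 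What makes the construction work at all is the simple point that halving the ample part costs nothing: subtracting the effective divisor $\tfrac12 A$ from the averaged witness $\tfrac12\Delta_p+\tfrac12(B_0)_p$ keeps it effective and, because removing an effective divisor only raises discrepancies, keeps it klt.
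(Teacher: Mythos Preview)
Your averaging construction does produce an affine bijection onto an rlt polytope in $\mathcal{RL}_{A/2}(W)$, and the discrepancy bookkeeping in your second paragraph is fine. The problem is your reading of ``preserves $\mathbb{Q}$-linear equivalence.'' You interpret it as: $\Delta_1 \sim_{\mathbb{Q}} \Delta_2$ implies $f(\Delta_1) \sim_{\mathbb{Q}} f(\Delta_2)$, which any map of the shape $\Theta \mapsto c\Theta + D$ satisfies. But the paper means, and its applications require, the much stronger condition $\Delta \sim_{\mathbb{Q}} f(\Delta)$ for every $\Delta \in C$. This is what allows one to transport the sets $\mathcal{E}(C)$, $\mathcal{W}_\phi(C)$, $\mathcal{A}_\psi(C)$ across the bijection via \autoref{equiv}: being a wlc or ample model of $(X,\Delta)$ depends only on the linear equivalence class of $K_X+\Delta$, so once $f(\Delta)\sim_{\mathbb{Q}}\Delta$ one may prove \autoref{rltfiniteness} on the rlt polytope $C'$ and read the decomposition back on $C$. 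Your map fails this: $f(\Delta)-\Delta=\tfrac12(B_0-A-\Delta)$ is not $\sim_{\mathbb{Q}}0$, and $K_X+f(\Delta)=\tfrac12(K_X+\Delta)+\tfrac12(K_X+B_0-A)$ is not $\mathbb{R}$-proportional to $K_X+\Delta$ in general, so there is no reason the wlc or ample models of $(X,\Delta)$ and $(X,f(\Delta))$ should agree.

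The missing ingredient is precisely the Bertini step \autoref{bertini}, which the paper uses and your argument never invokes. The paper also tilts each vertex $B_i=A+\Delta_i$ a small rational amount $t_i$ toward $B_0$ to obtain an rlt boundary $A+\Gamma_i$ --- your averaging idea --- but then, rather than simply discarding half of $A$, it \emph{trades} the ample $\mathbb{Q}$-divisor $\tfrac{A}{2}+t_i(\Delta_i-\Delta_0)$ for a $\mathbb{Q}$-linearly equivalent effective divisor $H_i$ sharing no support with $A$, and sets $D_i=\tfrac{A}{2}+\Gamma_i+H_i\sim_{\mathbb{Q}}A+\Delta_i$. The bijection $B_i\mapsto D_i$ (extended affinely, using that $C$ is a simplex) then has $B\sim_{\mathbb{Q}}f(B)$ built in, at the cost of enlarging the coefficient space to absorb the new $H_i$. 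Your ``halving'' shortcut throws away linear-equivalence information that the Bertini substitution is designed to retain.
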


\begin{proof}
	To show a rational polytope $C'\subseteq \mathcal{RL}_{A'}(V')$ is rlt it is enough to show that every vertex boundary $B_{i}$ of $C'$ is rlt with witnesses in $V'$.
	
	Indeed if this is the case then for $B \in C'$ we have $B= \sum \lambda_{i} B_{i}$ for $\lambda_{i} \geq 0$ with $\sum \lambda_{i}=1$. Let $U_{j}$ be an open cover such that each $B_{i}$ is witnessed by $(X_{j},B_{i,j})$, then $B|_{X_{j}}\sim \sum \lambda_{i}B_{i,j}$, so $(X,B)$ must be rlt as claimed.

	Write the vertices of $C$ as $B_{i}=A+\Delta_{i}$ for $i > 0$ and let be $B_{0}=A+\Delta_{0} \in \mathcal{RL}_{A}(V)$ be the rlt boundary. Now choose $\Gamma_{i} =(1-t_{i})\Delta_{i}+t_{i}\Delta_{0}$ for $t_{i}$ rational and sufficiently small that $\frac{A}{2}+t_{i}(\Delta_{i}-\Delta_{0})$ is ample. By construction $(X,A+\Gamma_{i})$ is rlt.
	
	Further choose $H_{i} \sim_{\mathbb{Q}} \frac{A}{2}+t_{i}(\Delta_{i}-\Delta_{0})$ effective and sharing no support with $A$. Then by construction
	\[A+\Delta_{i} \sim_{\mathbb{Q}} \frac{A}{2}+\Gamma_{i}+H_{i}=D_{i}\]
	and $(X,D_{i})$ is rlt by \autoref{bertini}. Reselecting $H_{i}$ if needed we may suppose that $D_{i}$ is not in the span of $\{D_{j}: i \neq j\}$ for each $i$. This can always be done since the $H_{i}$ are all ample.
	
	Let $W$ be a coefficient space containing the components of $\Delta_{i}, H_{i}$ such that each $(X,D_{i})$ is rlt with witnesses in $W$. Now let $C'$ be the convex hull of the $D_{i}$, so that $C'$ is an rlt polytope inside $\mathcal{RL}_{A}(W)$.
	
	Since $C$ is a simplex, by assumption, we can write any $B \in C$ uniquely as $B=\sum \lambda_{i} B_{i}$ where $\lambda_{i} \geq 0$ and $\sum \lambda_{i} =1$. Therefore, we can define a bijective affine map $f: C \to C'$ by sending $B_{i}=A+\Delta_{i} \to D_{i}$ and then writing $f(B)= \sum \lambda_{i} D_{i}$.
	
	Clearly $B$ is rational if and only if $\lambda_{i} \in \mathbb{Q}$, which happens if and only if $f(B)=\sum \lambda_{i} D_{i}$ is rational. So $f, f^{-1}$ preserve rationality. Equally as $B_{i} \sim_{\mathbb{Q}} D_{i}$ we must have $B \sim_{\mathbb{Q}} f(B)$, and the same holds for $f^{-1}$.
	
\end{proof}

\begin{remark}
	With the notation of \autoref{rlt-repl}, if $S \subseteq C$ is a rational polytope then $f(S)$ is also a rational polytope since $f$ is affine and preserves rationality. The converse is also true since $f^{-1}$ is also still affine and $f^{-1}f(S)=S$ as $f$ is a bijection.  
\end{remark}

Given a general rlc polytope we can always take a rational triangulation and define a piecewise affine bijection, $f$, by using the above procedure on each simplex. However, this does not in general preserve convexity, so it easier in practice to work locally on the polytope and assume it is a simplex. Alternatively, this could be remedied by working with $C'$, the convex hull of $f(C)$, since this must still be an rlt polytope. Then $f\colon C \to C'$ is no longer a bijection, but it is still preserves rationality and $\mathbb{Q}$-linear equivalence so would suffice for applications.

\begin{definition}
	Take $S, S' \subseteq \mathcal{RL}_{A}(V)$. We say $S \sim_{\mathbb{R}} S'$ if for every $\Delta \in S$ there is $\Delta' \in S'$ with $\Delta \sim_{\mathbb{R}} \Delta'$ and vice versa. The linear closure of $S$ is given by $$S^{*}=\bigcup_{S' \sim S}S'= \{\Delta \in \mathcal{RL}_{A}(V) \text{ such that } \exists \Delta' \in S \text{ with }\Delta \sim_{\mathbb{R}} \Delta'\}$$.
\end{definition}

\begin{lemma}
	Let $V$ be a finite dimensional, rational affine subspace of $WDiv_{\mathbb{R}}(X)$ and fix $A \geq 0$. Take $S \subseteq \mathcal{RL}_{A}(V)$ a rational polytope. Then the linear closure, $S^{*}$ is also a rational polytope. 
\end{lemma}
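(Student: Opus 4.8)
The whole statement lives inside the fixed finite–dimensional rational affine space $V_{A}$, so the plan is to reduce it to rational polyhedral geometry, the only nontrivial input being that $\mathbb{R}$–linear equivalence is ``defined over $\mathbb{Q}$'' on such a space. Write $\Lambda$ for the set of $\mathbb{R}$–divisors of the form $\Delta_{1}-\Delta_{2}$ with $\Delta_{1},\Delta_{2}\in V$ and $\Delta_{1}\sim_{\mathbb{R}}\Delta_{2}$. One checks immediately that $\Lambda$ is a linear subspace of the direction space of $V$, that every element of $\Lambda$ is $\sim_{\mathbb{R}}0$, and that for $\Delta,\Delta'\in V_{A}$ one has $\Delta\sim_{\mathbb{R}}\Delta'$ if and only if $\Delta-\Delta'\in\Lambda$.

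The main step is to show that $\Lambda$ is a \emph{rational} subspace. Since $\Lambda$ sits inside the finite–dimensional direction space of $V$, it is the $\mathbb{R}$–span of finitely many divisors $E_{1},\dots,E_{s}\in\Lambda$, and by definition of $\sim_{\mathbb{R}}$ each $E_{l}$ equals $\sum_{j}r_{lj}(f_{lj})$ for finitely many rational functions $f_{lj}\in K(X)^{\times}$ and real numbers $r_{lj}$. Collecting all the $f_{lj}$ into one finite list $f_{1},\dots,f_{k}$, every element of $\Lambda$ is an $\mathbb{R}$–linear combination of the integral divisors $(f_{1}),\dots,(f_{k})$, while conversely any element of $\operatorname{span}_{\mathbb{R}}\{(f_{1}),\dots,(f_{k})\}$ lying in the direction space of $V$ is automatically $\sim_{\mathbb{R}}0$. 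Hence $\Lambda$ is the intersection of the direction space of $V$ with $\operatorname{span}_{\mathbb{R}}\{(f_{1}),\dots,(f_{k})\}$; both of these are rational subspaces of $WDiv_{\mathbb{R}}(X)$ (the second because the $(f_{i})$ are integral), and the intersection of two rational subspaces is rational, so $\Lambda$ is rational.

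With this in hand I would identify $S^{*}$ explicitly. First, if $\Delta\in V_{A}$ is effective and $\Delta\sim_{\mathbb{R}}\Delta'$ for some $\Delta'\in S$, then $(X,\Delta)$ is automatically rlc with witnesses in $V_{A}$: if $(X_{i},\Delta'_{i})$ witness $\Delta'$ then restricting $\Delta\sim_{\mathbb{R}}\Delta'$ over each $U_{i}$ gives $(K_{X}+\Delta)|_{X_{i}}\sim_{\mathbb{R}}(K_{X}+\Delta')|_{X_{i}}\sim_{\mathbb{R}}K_{X_{i}}+\Delta'_{i}$, so the same $(X_{i},\Delta'_{i})$ witness $(X,\Delta)$, and $K_{X}+\Delta$ is $\mathbb{R}$–Cartier. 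Combining this with the equivalence criterion above gives
\[S^{*}=(S+\Lambda)\cap\{\Delta\in V_{A}:\Delta\geq0\}=(S+\Lambda)\cap\mathcal{RL}_{A}(V).\]
Now $S+\Lambda$ is a rational polyhedron: as the Minkowski sum of the rational polytope $S$ with the rational cone generated by $\pm$ a rational basis of $\Lambda$, it is an $H$–polyhedron with rational data by Minkowski--Weyl. Likewise $\{\Delta\in V_{A}:\Delta\geq0\}$ is cut out of $V_{A}$ by the finitely many rational inequalities $\operatorname{mult}_{P}\Delta\geq0$, where $P$ ranges over the finitely many prime divisors occurring in elements of $V_{A}$. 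Their intersection is therefore again a rational polyhedron, and it is bounded, being contained in $\mathcal{RL}_{A}(V)$; a bounded rational polyhedron is a rational polytope, as required. (If $S=\varnothing$ the statement is vacuous.)

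The genuinely nontrivial point is the rationality of $\Lambda$, that is, the fact that $\sim_{\mathbb{R}}$–equivalence classes inside the rational affine space $V_{A}$ are rational affine subspaces; everything afterwards is routine polyhedral bookkeeping, together with the observation that being rlc with witnesses in $V_{A}$ propagates along $\mathbb{R}$–linear equivalence within $V_{A}$.
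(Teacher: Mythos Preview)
Your argument follows essentially the same route as the paper's: identify the subspace $\Lambda$ (the paper calls it $N$) of divisors in the direction space of $V$ that are $\sim_{\mathbb{R}}0$, write $S^{*}=(S+\Lambda)\cap\{\Delta\geq 0\}$, and observe that this is a rational polyhedron. Your justification that $\Lambda$ is a \emph{rational} subspace is in fact more explicit than the paper's, which simply passes to the quotient $V/N\subseteq\mathrm{Pic}(X)\otimes\mathbb{R}$ and asserts that preimages of rational polytopes are cut out by rational half-spaces without spelling out why the quotient map is defined over $\mathbb{Q}$.

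There is, however, a genuine gap in your final step. You conclude boundedness of $S^{*}$ from the containment $S^{*}\subseteq\mathcal{RL}_{A}(V)$, but at this point $\mathcal{RL}_{A}(V)$ is not yet known to be bounded: this lemma is precisely the input used immediately afterwards to prove that $\mathcal{RL}_{A}(V)$ is a rational polytope, so invoking its boundedness here is circular. The paper instead argues boundedness directly: for each fixed $B\in S$ the slice $\{B\}^{*}=\{B+E\geq D:E\in N\}$ is bounded (the coefficients of an admissible $E$ being controlled by those of $B$ and $D$), and then compactness of $S$ gives boundedness of $S^{*}$. You should replace your appeal to $\mathcal{RL}_{A}(V)$ with a direct argument of this kind.
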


\begin{proof}
	By translating by $-A$ we can view $S$ as a subset of $V$. Similarly, after a translation by say $D$ of $V$ we can suppose that $V$ is a vector space. After these transformations we have that $S^{*}=\{B+E \text{ such that } B\in S, E \sim 0 \text{ and } B+E -D \geq 0\}$.

	Let $N=\{E \in V: E \sim_{\mathbb{R}} 0\}$ and take $\phi:V \to W=V/N \subseteq \text{Pic}(X)\otimes \mathbb{R}$, then $\phi(S)=\phi(S^{*})$ is a rational polytope in $W$ and its preimage $S+N$ is still cut out by finitely rational half spaces, but is no longer compact. Hence we must have that $S^{*}=(S+N)\cap({\Delta \geq D})$ is cut out by finitely many rational half spaces. 
	
	However for each point $B \in S$, the set $\{B\}^{*}=\{B+E\geq D \text{ such that } E \sim_{\mathbb{R}} 0\}$ is bounded, since the $E\in N$ such that $B+E \geq D$ are bounded by the coefficients of $B$ and $D$. Since $S$ is closed and bounded however we must have that $S^{*}$ is bounded too.
\end{proof}

In particular $\mathcal{RL}_{A}(V)$ is a rational polytope over a local ring, since it is the linear closure of $\mathcal{L}_{A}(V)$. To lift from the local case, we essentially find an open cover of $T$ which witnesses $\mathcal{RL}_{A}(V)$.

\begin{theorem}\label{rlt-poly}
	Let $V$ be a finite dimensional, rational affine subspace of $WDiv_{\mathbb{R}}(X)$ and fix $A \geq 0$. Then $\mathcal{RL}_{A}(V)$ is a rational polytope.
\end{theorem}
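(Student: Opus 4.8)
The plan is to bootstrap from the local case. Over a local ring, $\mathcal{RL}_{A}(V)$ is the linear closure $\mathcal{L}_{A}(V)^{*}$ and hence a rational polytope, as noted just before the statement; the task is to patch these local descriptions together using that $T$ is quasi-compact. For $p \in T$ write $\rho_{p} \colon V_{A} \to V_{A,p}$ for restriction to $X_{p}$, and recall that $\Delta \in V_{A}$ lies in $\mathcal{RL}_{A}(V)$ exactly when $\rho_{p}(\Delta) \in \mathcal{RL}_{A_{p}}(V_{p})$ for every $p$. Convexity of $\mathcal{RL}_{A}(V)$ is immediate: given $\Delta_{1},\Delta_{2} \in \mathcal{RL}_{A}(V)$, pass to a common refinement of their witnessing covers; over each member a convex combination of $\Delta_{1},\Delta_{2}$ restricts to something $\mathbb{R}$-linearly equivalent to the corresponding convex combination of the witnesses, which still lies in $V_{A}$ and is lc by convexity of $\mathcal{L}$.

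The heart of the argument is a local-to-neighbourhood statement: for each $p \in T$ there is an open $W_{p} \ni p$ with $\mathcal{RL}_{A_{W_{p}}}(V_{W_{p}})$ a rational polytope. To prove it, let $\bar{D}_{1},\dots,\bar{D}_{r}$ be the (finitely many, rational) vertices of the rational polytope $\mathcal{RL}_{A_{p}}(V_{p})$, and for each $i$ write $\bar{D}_{i} \sim_{\mathbb{R}} \bar{\Delta}_{i}$ over $T_{p}$ with $(X_{p},\bar{\Delta}_{i})$ lc and $\bar{\Delta}_{i} \in V_{A,p}$; lift $\bar{D}_{i},\bar{\Delta}_{i}$ along the surjection $\rho_{p}$ to $D_{i},\Delta_{i} \in V_{A}$. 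The prime divisors occurring in $V_{A}$ have finitely many images in $T$, so we may take $W_{p} \ni p$ disjoint from all such images that do not contain $p$; then over $X_{W_{p}}$ the lifted equivalences $D_{i} \sim_{\mathbb{R}} \Delta_{i}$ hold and the restriction $V_{A,W_{p}} \to V_{A,p}$ is injective. Moreover, since $f^{-1}(p) \subseteq X_{p}$, each $(X,\Delta_{i})$ is lc along $f^{-1}(p)$, and — the non-lc locus being closed (log resolutions exist by \cite[Proposition 2.12]{bhatt2020globally+}) and $f$ proper — after shrinking $W_{p}$ we may assume $(X_{W_{p}},\Delta_{i}|_{X_{W_{p}}})$ is lc for all $i$. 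Now let $\Pi \subseteq V_{A}$ be the convex hull of $D_{1},\dots,D_{r}$. By convexity of $\mathcal{L}$ over $W_{p}$, every element of $\Pi|_{X_{W_{p}}}$ is $\mathbb{R}$-linearly equivalent over $W_{p}$ to an lc boundary in $V_{A}$, so $\Pi|_{X_{W_{p}}} \subseteq \mathcal{RL}_{A_{W_{p}}}(V_{W_{p}})$; conversely $\mathcal{RL}_{A_{W_{p}}}(V_{W_{p}})$ restricts into $\mathcal{RL}_{A_{p}}(V_{p})$, which is the convex hull of the $\bar{D}_{i}$, i.e.\ the image of $\Pi|_{X_{W_{p}}}$. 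As $V_{A,W_{p}} \to V_{A,p}$ is injective these sets coincide, so $\mathcal{RL}_{A_{W_{p}}}(V_{W_{p}}) = \Pi|_{X_{W_{p}}}$ is a rational polytope.

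Now cover $T$ by finitely many such neighbourhoods $W_{p_{1}},\dots,W_{p_{m}}$, using quasi-compactness. A divisor $\Delta \in V_{A}$ is rlc over $T$ with witnesses in $V_{A}$ if and only if it is so over every $W_{p_{l}}$, since witnessing covers of the $W_{p_{l}}$ assemble to one of $T$ and conversely restrict; hence, writing $\tau_{l} \colon V_{A} \to V_{A,W_{p_{l}}}$ for restriction,
\[
\mathcal{RL}_{A}(V) \;=\; \bigcap_{l=1}^{m} \tau_{l}^{-1}\!\big(\mathcal{RL}_{A_{W_{p_{l}}}}(V_{W_{p_{l}}})\big),
\]
a finite intersection of preimages of rational polytopes under affine-linear maps (intersected with the cone $\{\Delta \geq 0\}$), hence a rational polyhedron. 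It remains to check boundedness: choose points $q_{1},\dots,q_{k}$ — the generic points of the closures of the images in $T$ of the prime divisors occurring in $V_{A}$ — so that $\prod_{j}\rho_{q_{j}}$ is injective on $V$; since each $\mathcal{RL}_{A_{q_{j}}}(V_{q_{j}})$ is bounded (a linear closure of a bounded set) and $\mathcal{RL}_{A}(V) \subseteq \bigcap_{j}\rho_{q_{j}}^{-1}(\mathcal{RL}_{A_{q_{j}}}(V_{q_{j}}))$, the set $\mathcal{RL}_{A}(V)$ is bounded, and therefore a rational polytope.

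The main obstacle is the neighbourhood statement of the second paragraph, and within it two points need care: that the data witnessing the local polytope — the finitely many $\mathbb{R}$-linear equivalences together with lc-ness of the witnesses — genuinely spreads out to an open neighbourhood in $T$, which rests on properness of $f$ and closedness of the non-lc locus; and that one can shrink the neighbourhood so that the relevant restriction of divisors is injective, which is what lets one identify $\mathcal{RL}_{A_{W_{p}}}(V_{W_{p}})$ with an honest polytope rather than merely sandwich it. One should also note that, unlike for $\mathcal{L}_{A}(V)$, boundaries in $\mathcal{RL}_{A}(V)$ need not have coefficients bounded by $1$, so the final boundedness step must invoke the local input rather than a naive coefficient bound.
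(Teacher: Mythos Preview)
Your proof is correct and follows essentially the same approach as the paper: spread the local polytope $\mathcal{RL}_{A_{p}}(V_{p}) = \mathcal{L}_{A_{p}}(V_{p})^{*}$ out to an open neighbourhood of each $p \in T$, cover $T$ by finitely many such neighbourhoods via quasi-compactness, and realise $\mathcal{RL}_{A}(V)$ as a finite intersection of preimages of rational polytopes under the restriction maps. The only cosmetic difference is that you lift the vertices of $\mathcal{RL}_{A_{p}}(V_{p})$ together with explicit lc witnesses and identify $\mathcal{RL}_{A_{W_{p}}}(V_{W_{p}})$ directly as the image of their convex hull, whereas the paper instead lifts the vertices of $\mathcal{L}_{A_{p}}(V_{p})$ and describes the neighbourhood polytope via the linear closure $\mathcal{L}_{A_{i}}(V_{i})^{*}$; these amount to the same construction.
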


\begin{proof}
	For $W$ an affine subspace, let $\hat{W}=\{w-w' \text{ such that } w,w' \in W\}$.
	
	Take a point $p \in T$, and consider $X_{p}=X\times T_{p} \to T_{p}$. Let $A_{p},V_{p}$ be the restrictions of $A,V$ to $X_{p}$ and let $D_{i}$ be the vertices of $\mathcal{L}_{A_{p}}(V_{p})$, then there are open sets $U_{i}$ around $p$ such that $(X\times U_{i},D_{i})$ are lc when $D_{i}$ is extended over $U_{i}$. Moreover we may freely assume that there are no vertical components of $V$ which meet $U_{p}= \bigcap U_{i}$ but are not supported over $p$, thus ensuring for $E$ in $\hat{V}|_{X_{U_{p}}}$ where $X_{U_{p}}=X\times U_{p}$, we have $E \sim_{\mathbb{R}} 0$ if and only if $E|_{X_{p}}\sim_{\mathbb{R}} 0$. By compactness of $T$ there are finitely many $p_{j}$ such that $U_{j}=U_{p_{j}}$ is an open cover of $T$. 
	
	A pair $(X,\Delta)$ is rlc with witnesses in $V$ if and only if it is witnessed over $U_{j}$. Indeed if it is rlc, then we must be able to find $B_{j}$ such $(X_{p_{j}},B_{j})$ is lc and $B_{j} \sim_{\mathbb{R}} \Delta$. By construction however $B_{j}$ extends to an lc pair $(X_{j}=X\times U_{p_{j}},B_{j})$. Then $(X,\Delta)$ is witnessed by $(X_{j}, B_{j})$ as required.
	
	Consider $\mathcal{RL}_{A}(V)$, by the previous paragraph we may take an open cover $U_{i}$ such that every pair $(X,B) \in \mathcal{RL}_{A}(V)$ is witnessed by pairs $(X_{i}=X\times U_{i},B_{i})$. Let $C_{i} = \mathcal{L}_{A_{i}}(V_{i})^{*}$ where $A_{i}, V_{i}$ are the restrictions of $A,V$ to $X_{i}$ and write $S_{i}=\{\Delta \in V: \Delta|_{X_{i}} \in C_{i}\}$, then $\mathcal{RL}_{A}(V)= \bigcap S_{i}$ is a rational polytope since each $C_{i}$ is and there are no divisors $D \neq 0$ with $D|_{X_{i}} \neq 0$ for every $i$.
\end{proof}

In particular then $\mathcal{RL}_{A}(V)$ is closed. Moreover since it is a polytope, if $(X,\Delta_{i})$ is a sequence of rlc pairs with $\Delta_{i} \to \Delta$, then the witnesses of $\Delta$ may be chosen to be the limit of witnesses of $\Delta_{i}$

	\section{Finiteness of Log Terminal Models}
	

	In this section we prove our Finiteness of Minimal Models result. Here, as in \autoref{setup}, $R$ will always be an excellent ring with dualising complex, $T$ will be a quasi-projective $R$ scheme and $X$ will always be an integral scheme projective over $T$. All pairs will be considered as $R$ pairs over $T$.
	
	\begin{lemma}\label{neftope}
		Fix a $\mathbb{Q}$-divisor $A \geq 0$ and let $C\subseteq \mathcal{L}_{A}(V)$ be a rational polytope. Then $\mathcal{N}(C)=\{\Delta \in C \text{ such that } K_{X}+\Delta \text{ is nef } \}$ is also a rational polytope.
	\end{lemma}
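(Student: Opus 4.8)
The plan is to exhibit $\mathcal{N}(C)$ as the intersection of $C$ with finitely many rational half-spaces of the form $\{\Delta : (K_X+\Delta)\cdot\Gamma\geq 0\}$, with $\Gamma$ a curve contracted by $X\to T$; as $C$ is a rational polytope this gives the claim at once. Note that $\mathcal{N}(C)$ is automatically closed (nef-ness is a closed condition) and convex (a convex combination of $T$-nef $\mathbb{R}$-divisors is $T$-nef).

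\textbf{Reduction to a local statement.} Fix a Cartier divisor $H$ on $X$ that is ample over $T$, and set $\tau(\Delta)=\inf\{t\geq 0 : K_X+\Delta+tH \text{ is $T$-nef}\}$. This is finite (since $K_X+\Delta+tH$ is relatively ample for $t\gg 0$), convex and lower semicontinuous in $\Delta$, and $\mathcal{N}(C)=\tau^{-1}(0)$. For each $T$-curve $\ell$ the function $\Delta\mapsto -(K_X+\Delta)\cdot\ell/(H\cdot\ell)$ is affine, and $\tau(\Delta)$ is the supremum of $0$ and of these functions; by \autoref{cone-thm} applied to $(X,\Delta)$ this supremum is already computed by the $(K_X+\Delta)$-negative extremal curves $C_i$ produced there, and the length bound \autoref{cone-thm}(3) shows there are only finitely many of them of normalised length bounded below by a positive constant. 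It is therefore enough to prove the \emph{local statement}: every $\Delta_0\in C$ has a relatively open neighbourhood $W\subseteq C$ and finitely many $T$-curves $\Gamma_1,\dots,\Gamma_k$ such that, for $\Delta\in W$, $K_X+\Delta$ is $T$-nef if and only if $(K_X+\Delta)\cdot\Gamma_j\geq 0$ for all $j$. Granting this, compactness of $C$ produces a finite subcover, and the union of the associated curves is a family $\Gamma_1,\dots,\Gamma_N$ with $\mathcal{N}(C)=C\cap\bigcap_{j}\{(K_X+\Delta)\cdot\Gamma_j\geq 0\}$ (``$\subseteq$'' is trivial and ``$\supseteq$'' uses the local statement on whichever piece of the cover contains the point). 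Each half-space is rational because $\Delta\mapsto(K_X+\Delta)\cdot\Gamma_j$ is affine and sends rational boundaries — where $K_X+\Delta$ is $\mathbb{Q}$-Cartier — to rational numbers, using the divisibility in \autoref{cone-thm}(3).

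\textbf{The local statement.} Fix $\Delta_0\in C$. If $K_X+\Delta_0$ is $T$-ample, or $T$-nef with every nearby boundary of $C$ again in $\mathcal{N}(C)$, there is nothing to do. If $K_X+\Delta_0$ is not $T$-nef, I would shrink $W$ so that $\tau\geq\tfrac12\tau(\Delta_0)>0$ on $W$ (possible by lower semicontinuity); then any curve computing $\tau(\Delta)$ for some $\Delta\in W$ is, after a small enough shrinking, already a $(K_X+\Delta_0)$-negative extremal ray of normalised length $\geq\tfrac14\tau(\Delta_0)$, hence one of finitely many $C_i$ by \autoref{cone-thm}(2)--(3) (their classes lie in a compact slice of $\overline{NE}(X/T)$ bounded away from the wall $(K_X+\Delta_0)^{\perp}$ and have no accumulation point). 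These finitely many $C_i$ then compute $\tau$, hence govern nef-ness, throughout $W$. The remaining — and main — case is $\Delta_0$ with $K_X+\Delta_0$ $T$-nef but not $T$-ample, with non-nef boundaries arbitrarily close: here \autoref{cone-thm} at $\Delta_0$ is vacuous, and the idea is to use semiampleness. Perturbing $A$ slightly via \autoref{bertini} one reduces to $\Delta_0=A+B_0$ with $A$ big and nef and $(X,(1-\epsilon)A+B_0)$ klt, so that $(K_X+\Delta_0)-\bigl(K_X+(1-\epsilon)A+B_0\bigr)=\epsilon A$ is big and nef and \autoref{bpt} makes $K_X+\Delta_0$ semiample. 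Let $g:X\to Z$ be the associated $T$-contraction (with $Z$ integral, normal and projective over $T$) and $m(K_X+\Delta_0)=g^{*}H_Z$ for some ample Cartier $H_Z$ on $Z$. Any curve not contracted by $g$ then meets $K_X+\Delta_0$ in a number $\geq 1/m$, so remains strictly positive for $\Delta$ near $\Delta_0$; hence on a small $W$ being $T$-nef is equivalent to being $Z$-nef. As $\rho(X/Z)<\rho(X/T)$, induction on the relative Picard number (the base case $\rho=0$ being trivial, everything then being relatively nef) gives that $\mathcal{N}_{/Z}(C)$ is a rational polytope, and intersecting with $W$ yields the local statement.

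\textbf{Main obstacle.} The crux is this last case. At a boundary point of the nef locus the Cone Theorem gives no information, since infinitely many extremal rays may accumulate on the wall $(K_X+\Delta_0)^{\perp}$; one must instead contract by the semiample divisor $K_X+\Delta_0$ and descend along $g$. The two points requiring care are securing semiampleness in the needed generality — which is precisely why one wants $A$ to be perturbed into something big and nef and to pass to a klt model so that \autoref{bpt} applies — and checking that $T$-nefness and $Z$-nefness agree near $\Delta_0$, where the integrality of the numbers $H_Z\cdot\ell$ is what supplies the uniform positivity gap.
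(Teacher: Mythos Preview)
Your approach follows the classical Shokurov-style induction on relative Picard number, but there is a genuine gap in your case (c). To contract along $K_X+\Delta_0$ you need it to be semiample, and for this you invoke \autoref{bpt} with the auxiliary pair $(X,(1-\epsilon)A+B_0)$ so that the difference $\epsilon A$ is big and nef. But the lemma only assumes $A\geq 0$: there is no bigness or nefness hypothesis on $A$, and $(X,\Delta_0)\in\mathcal{L}_A(V)$ is only lc, so $(X,(1-\epsilon)A+B_0)$ need not be klt either. Your appeal to \autoref{bertini} does not repair this --- that result \emph{adds} a given big and nef divisor to an rlt pair; it does not convert an arbitrary effective $A$ into a big and nef one. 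The paper in fact remarks immediately after this lemma that its whole point is that $A$ need \emph{not} be ample, precisely so that Bertini-type manoeuvres can be avoided. (There is a secondary issue: when $K_X+\Delta_0\equiv_T 0$ the associated morphism $g$ has $Z\to T$ finite, so $\rho(X/Z)=\rho(X/T)$ and your induction does not drop.)

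The paper's proof is much shorter and sidesteps both semiampleness and induction. Writing $B=\sum\lambda_iB_i$ as a convex combination of the vertices $B_i$ of $C$, any curve with $(K_X+B)\cdot\Gamma<0$ satisfies $(K_X+B_i)\cdot\Gamma<0$ for some $i$. Hence nefness of $K_X+B$ is already detected by the extremal rays $R_{i,j}$ of the finitely many lc pairs $(X,B_i)$, and $\mathcal{N}(C)=C\cap\bigcap_{i,j}\{(K_X+B)\cdot R_{i,j}\geq 0\}$. Although the $R_{i,j}$ may be countably many, the uniform length bound in \autoref{cone-thm}(3) controls the associated hyperplanes; the paper simply cites \cite[Proposition~9.31]{bhatt2020globally+} to conclude that this intersection is a rational polytope. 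No bigness, no contraction, no Picard-number descent is required.
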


	\begin{proof}	
		Let $B_{i}$ be the vertices of $C$. If $B \in C$ then $B= \sum \lambda_{i} B_{i}$ for $1 \geq \lambda_{i} \geq 0$ so $(K_{X}+B).C <0$ ensures $(K_{X}+B_{i}).C <0$ for some $i$. In particular if $R_{i,j}$ are the $K_{X}+B_{i}$ negative extremal rays then $K_{X}+B$ is nef if and only if $(K_{X}+B).R_{i,j} \geq 0$ for all $i,j$. Indeed, suppose that we have such a $K_{X}+B$ and that $R$ is a $K_{X}+B$ negative extremal ray, then $(K_{X}+B_{i}).R <0$ for some $i$ and so $R=R_{i,j}$ for some $j$, a contradiction. Then the condition $(K_{X}+B).R_{i,j} \geq 0$ defines a rational polytope by \cite[Proposition 9.31]{bhatt2020globally+}.
	\end{proof}
	
	Since this result does not require $A$ to be ample, we may often avoid the use of Bertini's Theorem, \cite[Lemma 3.7.3]{birkar2010existence} in particular, to substitute a big divisor for an ample one. Versions of these results are available for rlt polytopes but making use of them requires extra back and forth between the klt and rlt case. 
	
	\begin{lemma}\label{rationality}
		Let $\phi: X \dashrightarrow Y$ be a birational contraction. Let $C \subseteq \mathcal{RL}_{A}(V)$ be an rlt polytope, then $\mathcal{W}_{\phi}(C)$ is a rational polytope.
	\end{lemma}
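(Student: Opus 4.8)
The plan is to reduce to the klt case treated in \cite{birkar2010existence} by passing to a witnessing open cover. First I would invoke the lemma (proved just before \autoref{rlt-poly}) that, since $C$ is an rlt polytope, there is a finite open cover $U_{i}$ of $T$ such that every $\Delta \in C$ is witnessed over $U_{i}$ by a \emph{klt} pair, and moreover (shrinking $C$ to a simplex and using the construction in the proof of that lemma) these witnesses can be taken affinely in the vertices: if $\Delta = \sum \lambda_{k}D_{k}$ with $D_{k}$ the vertices and $D_{k}|_{X_{i}} \sim_{\mathbb{R}} D_{k,i}$ a fixed klt witness, then $\Delta_{i} := \sum \lambda_{k}D_{k,i}$ is a witness of $\Delta$. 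Thus we get, for each $i$, a coefficient space $V_{i}$ on $X_{i}$ (spanned by the $D_{k,i}$ and $A|_{X_{i}}$), an ample or big divisor $A_{i} = A|_{X_{i}}$, and an affine map $C \to \mathcal{L}_{A_{i}}(V_{i})$, $\Delta \mapsto \Delta_{i}$, whose image is a rational polytope $C_{i}$ (rational because the map is affine and rational on vertices).

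Next I would translate the wlc-model condition across this cover. By the definition of wlc model for rlt pairs, $\phi$ is a wlc model of $(X,\Delta)$ if and only if $\phi_{i}\colon X_{i} \dashrightarrow Y_{i}$ is a wlc model of $(X_{i},\Delta_{i})$ for every $i$; and by \autoref{equiv} this is independent of the choice of witnesses. Hence
\[
\mathcal{W}_{\phi}(C) = \{\Delta \in C : \phi_{i} \text{ is a wlc model of } (X_{i},\Delta_{i}) \text{ for all } i\} = \bigcap_{i} g_{i}^{-1}\big(\mathcal{W}_{\phi_{i}}(C_{i})\big),
\]
where $g_{i}\colon C \to C_{i}$ is the affine map above. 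By the klt finiteness-of-models machinery over $X_{i}$ — specifically \cite[Lemma 3.6.8 and the geography-of-models results]{birkar2010existence}, which apply since $(X_{i},\Delta_{i})$ is a klt threefold pair projective over the quasi-projective $R$-scheme $U_{i}$ and the requisite MMP (\autoref{rltmmp}) and cone/contraction/base-point-free theorems hold in this setting — each $\mathcal{W}_{\phi_{i}}(C_{i})$ is a rational polytope (this is exactly the statement that the set of boundaries for which a fixed birational contraction is a wlc model is a rational polytope, proved by intersecting the $\mathcal{N}$-polytopes of $Y_{i}$ pulled back, cf.\ \autoref{neftope}, with the negativity/non-positivity conditions which are themselves closed rational-linear). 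Then $\mathcal{W}_{\phi}(C)$ is a finite intersection of preimages of rational polytopes under rational affine maps, hence a rational polytope.

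The main obstacle I anticipate is not the gluing — that is formal once the witness-cover lemma is in hand — but checking that the klt statement "$\mathcal{W}_{\phi_{i}}(C_{i})$ is a rational polytope" is genuinely available over these mixed-characteristic bases $U_{i}$ rather than only over a field. This requires that all the inputs to Birkar's argument (existence of log terminal models, the base-point free theorem to pass between wlc and ample models, rationality of the nef chamber \autoref{neftope}, and finiteness of the relevant extremal rays) hold for klt threefold $R$-pairs over $U_{i}$; these are precisely the results collected in the Preliminaries and \autoref{rltmmp}, so the reduction is legitimate, but one must be careful that $A|_{X_{i}}$ remains big (ample if we arranged $A$ ample) and that no vertex components of $V$ collapse under restriction — both arranged exactly as in the proof of \autoref{rlt-poly}. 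A secondary subtlety is that shrinking $C$ to a simplex is harmless for proving it is a polytope (a set that is locally a rational polytope and convex is a rational polytope), so I would either invoke \autoref{rlt-repl} to reduce to an rlt polytope with the simplex structure, or triangulate $C$ and take the union.
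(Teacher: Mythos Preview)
Your proposal is correct and follows essentially the same route as the paper: pass to a finite witnessing open cover $\{U_{i}\}$, establish on each $X_{i}$ that $\mathcal{W}_{\phi_{i}}$ of the relevant klt polytope is rational (via the BCHM argument, with \autoref{neftope} standing in for \cite[Lemma 3.7.4]{birkar2010existence}), and then intersect the preimages. The one technical difference is that where you build an explicit affine witness map $g_{i}\colon C\to C_{i}$ (forcing the simplex/triangulation step to make it well-defined), the paper instead restricts to $C_{i}=C|_{X_{i}}$ and uses the linear-closure operation $(\cdot)^{*}$ from the previous section to produce a klt polytope $C'_{i}\subseteq \mathcal{L}_{A_{i}}(V_{i})$ with $C_{i}\subseteq (C'_{i})^{*}$, then pulls back along the plain restriction map; this sidesteps the well-definedness issue without triangulating.
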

	\begin{proof}
		We can choose a finite open cover, $U_{i}$ such that $C$ is witnessed by klt pairs over $U_{i}$. On $X_{i}$ we can write $N_{i}=\{E\sim_{\mathbb{R}} 0\} \subseteq V_{i}=V|_{X_{i}}$, $C_{i}=C|_{X_{i}}$ and consider the induced map $\phi_{i}:X_{i} \to Y_{i}$. Now let $C'_{i}=\mathcal{L}_{A_{i}}(V_{i}) \cap C_{i}^{*}$. After perhaps shrinking $C'_{i}$ we may suppose it is a klt polytope and $C_{i} \subseteq (C')^{*}_{i}$. Thus $\mathcal{W}_{\phi_{i}}(C'_{i})$ is a rational polytope by \cite[Corollary 3.11.2]{birkar2010existence} with \cite[Theorem 3.11.1]{birkar2010existence} and \cite[Lemma 3.7.4]{birkar2010existence} replaced by Lemma \ref{neftope}.
		
		Therefore $\mathcal{W}_{i}=\mathcal{W}_{\phi_{i}}(C_{i})=\mathcal{W}_{\phi_{i}}(C'_{i})^{*}\cap C_{i}$ is also a rational polytope. For each $\mathcal{W}_{i}$ we have a rational polytope $\hat{\mathcal{W}_{i}}=\{\Delta \in C: \Delta|_{X_{i}} \in \mathcal{W}_{i}\} \subseteq C$. The intersection of these polytopes is precisely $\mathcal{W}_{\phi}(C)$.
	\end{proof}
	
	\begin{lemma}\label{faces}
		Let $\phi: X \dashrightarrow Y$ be a birational contraction. Let $C$ be an rlt polytope, let $F \subseteq \mathcal{W}_{\phi}(C)$ be a face, possibly with $F =\mathcal{W}_{\phi}(C)$. Suppose $f: X \dashrightarrow Z$ is an ample model for some $B$ in the interior of $F$. Then there is a factorisation $f=g \circ \phi$ for some morphism $g:Y \to Z$, and moreover $f$ is an ample model for every boundary in the interior of $F$.
	\end{lemma}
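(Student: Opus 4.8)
The plan is to construct $g$ as the fibration attached to the semiample divisor $K_{Y}+B_{Y}$ on the wlc model, to identify its target with $Z$ using uniqueness of ample models, and then to check that nothing changes as $\Delta$ varies over the interior of $F$. To set up, fix a common resolution $p\colon W\to X$, $q\colon W\to Y$, $r\colon W\to Z$ of $\phi$ and $f$. Since $\phi$ is a wlc model of $(X,B)$ we may write $p^{*}(K_{X}+B)=q^{*}(K_{Y}+B_{Y})+E$ with $E\ge 0$ and $q$-exceptional and $K_{Y}+B_{Y}$ nef over $T$, and since $f$ is the ample model of $(X,B)$ we have $p^{*}(K_{X}+B)\sim_{\mathbb{R}}r^{*}H+G$ with $H$ ample on $Z$ and $G\ge 0$ satisfying $G\le G'$ for every effective $\mathbb{R}$-divisor $G'\sim_{\mathbb{R}}p^{*}(K_{X}+B)$.

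The key point is that $K_{Y}+B_{Y}$ is \emph{semiample} over $T$. Indeed $(Y,B_{Y})$ is an rlt threefold pair (witnessed by the pushforwards of the witnesses of $(X,B)$) with $K_{Y}+B_{Y}$ nef over $T$, and semiampleness is local over $T$, so this follows from log abundance for klt threefold pairs \cite{bernasconi2021abundance}. Let $g\colon Y\to Z_{0}$ be the resulting contraction and pick $H_{0}$ ample on $Z_{0}$ with $K_{Y}+B_{Y}\sim_{\mathbb{R}}g^{*}H_{0}$, so that $p^{*}(K_{X}+B)\sim_{\mathbb{R}}(g\circ q)^{*}H_{0}+E$. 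Moreover $E$ is minimal among effective $\mathbb{R}$-divisors $\mathbb{R}$-linearly equivalent to $p^{*}(K_{X}+B)$: for such a $G'$ one has $q_{*}G'\ge 0$ and $q_{*}G'\sim_{\mathbb{R}}K_{Y}+B_{Y}$, so $G'-E-q^{*}(q_{*}G')$ is numerically trivial over $Y$ with zero pushforward, hence $\ge 0$ by the negativity lemma, giving $G'\ge E$. Therefore $g\circ\phi$ is an ample model of $(X,B)$, and since ample models are unique we may identify $Z_{0}$ with $Z$ and conclude $f=g\circ\phi$, which is the first assertion.

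For the last assertion, fix any boundary $\Delta$ in the interior of $F$. As $F\subseteq\mathcal{W}_{\phi}(C)$, $\phi$ is a wlc model of $(X,\Delta)$, so $K_{Y}+\Delta_{Y}$ is nef, hence semiample as above, with associated contraction $g_{\Delta}\colon Y\to Z_{\Delta}$. I claim $g$ and $g_{\Delta}$ contract exactly the same curves. Since $B$ and $\Delta$ both lie in the interior of the face $F$, there are $\epsilon>0$ and $B'\in F$ with $B=\tfrac{1}{1+\epsilon}B'+\tfrac{\epsilon}{1+\epsilon}\Delta$; then $K_{Y}+B_{Y}$ is a strictly positive combination of the nef divisors $K_{Y}+B'_{Y}$ and $K_{Y}+\Delta_{Y}$, so any curve $C$ with $(K_{Y}+B_{Y})\cdot C=0$ satisfies $(K_{Y}+\Delta_{Y})\cdot C=0$, and symmetrically. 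As $g$ (resp. $g_{\Delta}$) contracts precisely the curves on which $K_{Y}+B_{Y}$ (resp. $K_{Y}+\Delta_{Y}$) vanishes, the claim follows, and since both maps are contractions, rigidity gives an isomorphism $Z_{\Delta}\cong Z$ carrying $g_{\Delta}$ to $g$. Hence $K_{Y}+\Delta_{Y}\sim_{\mathbb{R}}g^{*}H_{\Delta}$ for some ample $H_{\Delta}$ on $Z$, and the argument of the previous paragraph (with $\Delta$ in place of $B$) shows that $g\circ\phi=f$ is an ample model of $(X,\Delta)$.

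The main obstacle is the semiampleness of $K_{Y}+B_{Y}$: one cannot extract the factorisation from a purely numerical comparison of $E$ and $G$ on $W$, because an effective divisor $D$ with $-D$ nef over $Y$ need not be numerically trivial on the fibres of $q$ — already the exceptional curve of a single blow-up is a counterexample — so log abundance is genuinely needed. Once the contraction $g$ is in hand, the remaining ingredients (the negativity lemma identification of ample models and the short convexity-plus-rigidity argument over the interior of $F$) are routine.
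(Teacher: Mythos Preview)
Your proof is correct and follows essentially the same approach as the paper: obtain $g$ from the semiampleness of $K_{Y}+B_{Y}$ and invoke uniqueness of ample models for the factorisation, then use a convexity argument on interior points of $F$ together with nefness to show that the curves contracted by $g$ are independent of the chosen interior boundary. You are simply more explicit than the paper in spelling out the semiampleness input, the negativity-lemma verification that $E$ is minimal, and the rigidity step, all of which the paper leaves implicit.
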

	
	\begin{proof}
		
		Since $\phi$ is a wlc model for $B$ we have an induced map $g: Y \to Z'$. However then $g\circ \phi$ is an ample model for $(X,B)$, so after post-composition with an isomorphism we may suppose $Z=Z'$ and $f=g\circ \phi$. Suppose $B' \in \mathcal{W}_{\phi}(C)$ then $f$ is an ample model for $(X,B')$ if and only $g$ is an ample model for $(Y,\phi_{*}B')$. Since $K_{Y}+\phi_{*}B$' is semiample $g$ is an ample model if and only if the curves contracted by $g$ are precisely those $\Gamma$ with $(K_{Y}+\phi_{*}B').\Gamma=0$.
		
		Suppose then $B'$ is in the interior of $F$. Consider $B_{t}=tB+(1-t)B'$, so that $K_{Y}+\phi_{*}B_{t}= t(K_{Y}+\phi_{*}B)+(1-t)(K_{Y}+\phi_{*}B')$. Then if $(K_{Y}+\phi_{*}B').\Gamma \neq 0$ then and $(K_{Y}+\phi_{*}B).\Gamma=0$ it must be that $(K_{Y}+\phi_{*}B_{t}).\Gamma < 0$ for all $t < 0$. However for small $t$ we have $B_{t} \in F$, a contradiction. By symmetry, we see that $\Gamma$ is contracted by $g$ if and only if $(K_{Y}+\phi_{*}B').\Gamma=0$, so $f$ is an ample model for $K_{X}+\phi_{*}B'$ also.
	\end{proof}

\begin{theorem}\cite[Theorem 9.33]{bhatt2020globally+}
	Suppose that $X$ is $\mathbb{Q}$-factorial and let $C$ be a klt polytope in $\mathcal{L}_{A}(V)$ for $A\geq 0$ big. There is a finite collection of log terminal models $\phi_{i}: X \dashrightarrow Y_{i}$ such that every $B \in \mathcal{E}(C)$ has some $j$ with $\phi_{j}$ a log terminal model of $(X,B)$. 
\end{theorem}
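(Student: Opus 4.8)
This is the ``finiteness of log terminal models'' statement, quoted here as \cite[Theorem 9.33]{bhatt2020globally+}; the natural way to establish it is the scheme of \cite{birkar2010existence} (and, over a field of characteristic zero, \cite{shokurov2011geography}). The plan is as follows. First I would reduce to the case that $A$ is ample: since $A$ is big, after perturbing slightly inside the polytope as in \cite[Lemma 3.7.3]{birkar2010existence} one may assume $A$ is ample without changing the conclusion, which is insensitive to affine bijections preserving $\mathbb{Q}$-linear equivalence. Then, for each $B\in\mathcal{E}(C)$, the pair $(X,B)$ is klt with $K_{X}+B$ pseudo-effective, so the MMP with scaling of an ample divisor (\autoref{rltmmp}, or \cite[Theorem F]{bhatt2020globally+}) produces a log terminal model $\phi_{B}\colon X\dashrightarrow Y_{B}$. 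Thus $B\in\mathcal{W}_{\phi_{B}}(C)$ and $\mathcal{E}(C)=\bigcup_{B}\mathcal{W}_{\phi_{B}}(C)$, while by \autoref{rationality} each $\mathcal{W}_{\phi_{B}}(C)$ is a rational polytope. It therefore suffices to prove that only finitely many distinct birational contractions occur among the $\phi_{B}$.

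For the finiteness I would argue by induction on $\dim C$; the case $\dim C=0$ is immediate. In general $\mathcal{E}(C)$ is compact and convex, being $C$ intersected with the pseudo-effective cone. Its relative boundary is contained in the union of the facets of $C$ — rational polytopes of dimension $<\dim C$ on which the result holds by induction, using that $\mathcal{E}(F)=F\cap\mathcal{E}(C)$ for every face $F$ — together with the locus where $K_{X}+B$ lies on the boundary of the pseudo-effective cone, which is controlled because $\mathcal{E}(C)$ itself turns out to be a rational polytope. For a point $B_{0}$ in the interior of $\mathcal{E}(C)$, run the $(K_{X}+B)$-MMP with scaling of $A$: the first step contracts the extremal ray realising the nef threshold $\lambda(B)=\sup\{t: K_{X}+B+tA \text{ is nef}\}$, and by the Cone Theorem \autoref{cone-thm} (equivalently \cite[Proposition 9.31]{bhatt2020globally+}) only finitely many rays are relevant near $B_{0}$, so a neighbourhood of $B_{0}$ decomposes into finitely many rational subpolytopes on each of which the first step is a fixed map $X\dashrightarrow X_{1}$. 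Replacing $(X,C)$ by $(X_{1},(\phi_{1})_{*}C)$ and iterating — the process stops for each individual $B$ by termination, \autoref{termination} — one produces all the log terminal models; finiteness then follows by organising this descent against the induction on $\dim C$, as in \cite{birkar2010existence}. The rlt analogue is obtained from the klt case by the standard reductions of \autoref{bertini}, \autoref{rlt-repl} and \autoref{rationality}, together with \autoref{faces} to compare weak log canonical models with ample models.

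The main obstacle is the finiteness itself, and within it the point that the descent through MMP steps must terminate with a \emph{globally} finite list of models, not merely a locally finite cover of $\mathcal{E}(C)$ by rational polytopes: a priori each $B$ carries its own MMP of unbounded length. As in \cite{birkar2010existence} this is resolved by the interaction of the induction on $\dim C$ with the piecewise rational-polytope behaviour of the threshold function $\lambda$ and with termination. Since all of the inputs — termination (\autoref{termination}, \autoref{rlt-term}), the Cone Theorem (\autoref{cone-thm}), and rationality of the regions $\mathcal{W}_{\phi}(C)$ (\autoref{rationality}) — are available in the present setting, the argument transposes without essential change; the remaining reductions (big versus ample, klt versus rlt) are routine.
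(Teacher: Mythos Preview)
The paper does not give a proof of this statement at all: it is quoted directly as \cite[Theorem 9.33]{bhatt2020globally+} and used as an input to the next result (\autoref{klt_finiteness}). There is thus nothing in the paper to compare your proposal against; the intended ``proof'' here is simply the citation.

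A couple of remarks on your sketch nonetheless. First, be careful with the forward references: \autoref{rationality} and \autoref{faces} appear \emph{after} this theorem in the paper, and \autoref{rationality} in particular is stated for rlt polytopes and deduced from the klt case via \cite[Corollary 3.11.2]{birkar2010existence}; invoking them to establish the klt input risks circularity or at least obscures the logical order. Second, your reduction from big to ample via \cite[Lemma 3.7.3]{birkar2010existence} is exactly the kind of Bertini-type step the present paper works to avoid over a general base (cf.\ the comment following \autoref{neftope}); the argument in \cite{bhatt2020globally+} is arranged so as not to need $A$ ample at this stage. Finally, you correctly identify the real content as the global finiteness in the inductive step, but your description of that step (``organising this descent against the induction on $\dim C$'') is where all the work lies and is not actually carried out in your proposal; in the BCHM scheme this is the passage from local finiteness near each $B_{0}$ to a finite cover of the compact set $\mathcal{E}(C)$, together with the check that the models produced on the boundary of $\mathcal{E}(C)$ already suffice for the interior after running finitely many further MMP steps. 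As written, your outline is a reasonable summary of the strategy but not a proof.
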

	
	\begin{corollary}\label{klt_finiteness}
	Suppose that $X$ is $\mathbb{Q}$-factorial and let $C$ be a klt polytope with $A$ big. Suppose that every $B \in C$ has components which span $NS(X)$, then there are finitely many birational maps $\phi_{i}: X \dashrightarrow Y_{i}$ such that for any $B \in \mathcal{E}(C)$ if $\phi: X \dashrightarrow Y$ is a wlc model then $\phi_{i}=f \circ \phi$ for some $i$ and some isomorphism $f: Y \to Y_{i}$.
\end{corollary}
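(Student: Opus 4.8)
The plan is to take the $\phi_i$ to be, up to isomorphism of the target, the finitely many log terminal models $\psi_1,\dots,\psi_m\colon X\dashrightarrow Y_j$ provided for $C$ by \cite[Theorem~9.33]{bhatt2020globally+}, enlarged by the crepant birational transformations of the $Y_j$ (flops and crepant divisorial contractions) that still yield weak log canonical models of some boundary in $\mathcal{E}(C)$. By that theorem every $B\in\mathcal{E}(C)$ already has some $\psi_j$ as a log terminal model, so the content of the statement is that an arbitrary wlc model $\phi\colon X\dashrightarrow Y$ of a pair $(X,B)$ with $B\in\mathcal{E}(C)$ coincides, up to an isomorphism $f\colon Y\to Y_i$, with one of the maps on this list; in particular the collection of all such wlc models is finite.

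To compare a given $\phi$ with a log terminal model $\psi_j\colon X\dashrightarrow Y_j$ of the same pair, I would pass to a common resolution $p\colon W\to X$, $q\colon W\to Y$, $q_j\colon W\to Y_j$ and write
\[p^*(K_X+B)=q^*(K_Y+B_Y)+E=q_j^*(K_{Y_j}+B_{Y_j})+E_j\]
with $E,E_j\ge 0$ exceptional over $Y$, $Y_j$ respectively. Since $K_Y+B_Y$ and $K_{Y_j}+B_{Y_j}$ are both nef, the negativity lemma forces $E=E_j$, so $\phi$ and $\psi_j$ have identical discrepancies and the induced birational map $Y_j\dashrightarrow Y$ is $(K_{Y_j}+B_{Y_j})$-crepant. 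The place where the hypothesis that every $B\in C$ has components spanning $NS(X)$ is used is in showing that $\phi$ introduces no exceptional divisors beyond those already forced by these discrepancies — this is the standard consequence of the spanning hypothesis, as in \cite{birkar2010existence} — so that $Y_j\dashrightarrow Y$ is a crepant transformation of the type used to build the enlarged list and $Y$ is one of its members.

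Finally, to see that the enlarged list is finite and that the identification is up to isomorphism, I would use abundance for klt threefolds, available through \autoref{rltmmp} and \cite[Theorem~1.1]{bernasconi2021abundance}: $(Y,B_Y)$ and $(Y_j,B_{Y_j})$ are klt with nef log canonical divisor, so $K_Y+B_Y$ and $K_{Y_j}+B_{Y_j}$ are semiample and both contract onto the common, and unique, ample model $Z$ of $(X,B)$; the crepant modifications of $Y_j$ over $Z$ that carry a nef log canonical divisor for some boundary of $\mathcal{E}(C)$ form a finite set, closed under the relevant flops by termination (\autoref{termination}) and bounded in number using the compactness of $\mathcal{E}(C)$. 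Enumerating these modifications over all $j$ produces the desired $\phi_i$. I expect the middle step — extracting exactly the right mileage from the spanning hypothesis, so that every wlc model is visibly a crepant modification of a log terminal model on the finite list — to be the main obstacle; everything else is a formal combination of \cite[Theorem~9.33]{bhatt2020globally+}, the negativity lemma, and threefold abundance.
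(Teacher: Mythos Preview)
Your approach diverges from the paper's and carries a genuine gap at exactly the place you anticipate. The paper does \emph{not} try to enumerate crepant modifications of the log terminal models $\psi_j$; instead it uses the spanning hypothesis in a concrete, one-line way that you do not. Given a wlc model $\phi\colon X\dashrightarrow Y$ of $(X,B)$, the paper chooses a divisor $D$ supported on the components of $B$ such that $\phi_{*}D$ is ample on $Y$ and $\phi$ is $(K_{X}+B+D)$-negative. After shrinking $D$ so that $B+D$ lies in a slightly enlarged klt polytope $C'\supseteq C$, the map $\phi$ is then the \emph{ample model} of $(X,B+D)$. Since ample models are unique and correspond to faces of the finitely many rational polytopes $\mathcal{W}_{\psi_j}(C')$, the finite list consists of the compositions $f_{i,j}\circ\psi_j$ coming from those faces. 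No crepant bookkeeping or flop termination is needed.

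Your proposed route, by contrast, asks you to show that the set of crepant modifications of each $Y_j$ over the (varying) ample model $Z$, subject to nefness for some boundary in $\mathcal{E}(C)$, is finite. But this is essentially a restatement of the corollary you are proving: a wlc model \emph{is} such a crepant modification, so bounding their number ``by compactness of $\mathcal{E}(C)$'' or ``by termination'' is circular. Termination of \emph{flops} (as opposed to flips) is not available here, and compactness of the polytope does not by itself bound the number of small $\mathbb{Q}$-factorialisations or partial contractions sitting between $Y_j$ and its ample model. Your use of the spanning hypothesis is also too vague: it is not used to control exceptional divisors of $\phi$, but rather to manufacture an ample perturbation on $Y$, which is precisely what converts a wlc model into an ample model and makes uniqueness do the work.
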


\begin{proof}
	
	After possibly expanding $V$, we can take $C'\subseteq \mathcal{L}_{\frac{A}{2}}(V)$ a klt polytope with $C \subseteq C'$ such that for any $B \in C$ if $D$ is a component of $B$ then $B+tD$ is in $C'$ for any $|t| < \epsilon$, for some $\epsilon >0$ depending only on $B$. This can be done by taking $C'$ to be the convex hull of small perturbations of the vertices of $C$.
	
	By the previous theorem there are finitely many birational maps $\phi_{i}: X \dashrightarrow Y_{i}$ such that for every $B \in \mathcal{E}(C')$ there is some $\phi_{i}$ a log terminal model of $(X,\Delta)$.
	
	Further are then finitely many morphisms $f_{i,j}:Y_{i}\to Z_{j}$ such that $\psi_{i,j}=f_{i,j} \circ \phi_{i}$ are ample models such that $B \in \mathcal{E}(C')$ some $\psi_{i,j}$ is the (unique) ample model of $(X,B)$. This is because the $f_{i,j}$ correspond to faces of the rational polytope $\mathcal{W}_{\phi_{i}}(C')$ by \autoref{faces}.
	
	Now pick $\Delta \in C$. Let $\psi: X \dashrightarrow Y$ be a wlc model for $\Delta$. We can take $D$ in the span of the components of $B$ such that $\phi$ is $B+D$ negative and $\phi_{*}D$ is ample. By shrinking $D$, we can suppose that $B+D\in C'$. Thus we have that $\psi$ is the ample model of some $B+D \in \mathcal{W}_{\psi}(C')$. Now take a log terminal model of $B+D$ of the form $\phi_{i}$ for some $i$,. By uniqueness of the ample model, up to post-composition with an isomorphism, we have $\psi=f_{i,j} \circ \phi_{i}=\psi_{i,j}$ for some $j$. 
	Thus the family of models $\{\psi_{i,j}\}$ give the required maps.
	
\end{proof}

\begin{theorem}\label{weak finiteness}
	Let $A$ be a big $\mathbb{Q}$-divisor and chose $V$ a coefficient space. Take $C$ be an rlt polytope inside $\mathcal{RL}_{A}(V)$, then
	
	\begin{enumerate}
		\item There are finitely many birational maps $\phi_{j}: X \dashrightarrow Y_{j}$ such that for any $B \in \mathcal{E}(C)$ if $\phi: X \dashrightarrow Y$ is a wlc model then $\phi_{j}=f \circ \phi$ for some $j$ and some isomorphism $f: Y \to Y_{j}$. \\
		\item There are finitely many rational maps $\psi_{k}: X \dashrightarrow Z_{k}$ such that if $\psi:X \dashrightarrow Z$ is an ample model for some $B \in \mathcal{E}(C)$ then there is an isomorphism $f:Z \to Z_{k}$ with $\psi_{k}=f \circ \psi_{k}$.
	\end{enumerate}
\end{theorem}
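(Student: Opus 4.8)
The plan is to reduce to the klt finiteness statement \autoref{klt_finiteness} by passing to an open cover of $T$ witnessing $C$ by klt pairs, and then to glue the finitely many models obtained on the pieces; the gluing is essentially automatic because the comparison isomorphisms are canonically determined by the birational maps involved.

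First I would fix a finite open cover $\{U_i\}$ of $T$ over which every boundary of $C$ is witnessed by a \emph{klt} pair; such a cover exists because $C$ is an rlt polytope (the lemma on witnessing covers preceding \autoref{rlt-repl}). Writing $X_i=X\times_T U_i$ with $A_i,V_i$ the restrictions of $A,V$, the operation of restricting a witness gives an affine, $\mathbb{Q}$-linear-equivalence preserving surjection $r_i\colon C\to C_i$ onto a klt rational polytope $C_i\subseteq\mathcal{L}_{A_i}(V_i)$, with $A_i$ big over $U_i$ and $X_i$ $\mathbb{Q}$-factorial as an open subscheme of $X$. Before choosing the cover I would also, exactly as in the proof of \autoref{klt_finiteness}, enlarge $V$, replace $A$ by $A/2$, and enlarge $C$ to a rational rlt polytope containing a neighbourhood of the original one in the new directions (keeping everything rlt via \autoref{bertini} and \autoref{rlt-repl}), arranged so that the spanning hypothesis of \autoref{klt_finiteness} holds for each restricted polytope $C_i$. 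Then \autoref{klt_finiteness} applies to $(X_i,C_i)$ and produces, for each $i$, finitely many birational maps $\{\phi_{i,a}\colon X_i\dashrightarrow Y_{i,a}\}$ such that every wlc model of $(X_i,B')$, $B'\in\mathcal{E}(C_i)$, agrees with some $\phi_{i,a}$ after post-composition with an isomorphism.

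For (1), let $\phi\colon X\dashrightarrow Y$ be a wlc model of $(X,B)$ for some $B\in\mathcal{E}(C)$. By the definition of a wlc model of an rlt pair and its independence of the choice of witnesses (\autoref{equiv}), the restriction $\phi_i\colon X_i\dashrightarrow Y_i:=Y\times_T U_i$ is a wlc model of $(X_i,r_i(B))$, and $r_i(B)\in\mathcal{E}(C_i)$ because pseudo-effectivity is detected on the cover. Hence $\phi_i=f_i\circ\phi_{i,a(i)}$ for an index $a(i)$ and an isomorphism $f_i=\phi_i\circ\phi_{i,a(i)}^{-1}$, which is determined by $\phi$. There are finitely many tuples $\mathbf{a}=(a(i))_i$; for each realised tuple fix a representative wlc model $\phi^{\mathbf{a}}\colon X\dashrightarrow Y^{\mathbf{a}}$. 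If $\phi$ realises $\mathbf{a}$, then on each $U_i$ the birational map $\phi_i\circ(\phi_i^{\mathbf{a}})^{-1}$ is an isomorphism $Y^{\mathbf{a}}_i\to Y_i$, and on overlaps these maps agree — each is a restriction of the single birational map $\phi\circ(\phi^{\mathbf{a}})^{-1}$, and isomorphisms that agree as birational maps are equal — so they glue to an isomorphism $f\colon Y^{\mathbf{a}}\to Y$ with $\phi=f\circ\phi^{\mathbf{a}}$. Taking $\{\phi_j\}$ to be the finite set $\{\phi^{\mathbf{a}}\}$ gives (1).

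For (2) I would avoid a second gluing and argue as in \autoref{klt_finiteness}. For each $\phi_j$ from (1), $\mathcal{W}_{\phi_j}(C)$ is a rational polytope by \autoref{rationality}, hence has finitely many faces; for each face $F$ whose interior carries a boundary admitting an ample model, \autoref{faces} shows that this ample model is a single rational map $\psi_{j,F}=g\circ\phi_j$, valid on all of the interior of $F$. Conversely, if $\psi\colon X\dashrightarrow Z$ is an ample model of some $B\in\mathcal{E}(C)$, then the pseudo-effective rlt pair $(X,B)$ has a good log terminal model by \autoref{rltmmp}, which up to target isomorphism is one of the $\phi_j$; so $B\in\mathcal{W}_{\phi_j}(C)$, and, with $F$ the face of $\mathcal{W}_{\phi_j}(C)$ whose interior contains $B$, uniqueness of ample models gives $\psi=f\circ\psi_{j,F}$ for an isomorphism $f$. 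Thus $\{\psi_{j,F}\}$ is the required finite family. I expect the main obstacle to be the bookkeeping in the reduction step — ensuring a single enlargement of $V$, together with a witnessing cover of the enlarged polytope, makes the spanning hypothesis of \autoref{klt_finiteness} hold on every piece at once — rather than the gluing, which is forced by the canonicity of the comparison isomorphisms.
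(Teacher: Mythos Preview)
Your overall strategy --- pass to a finite witnessing cover, apply \autoref{klt_finiteness} on each piece, glue --- is the paper's, and your gluing argument for (1) and deduction of (2) from (1) via \autoref{faces} are essentially identical to what the paper does.

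The gap is precisely where you expected it: arranging the spanning hypothesis of \autoref{klt_finiteness}. Your phrase ``exactly as in the proof of \autoref{klt_finiteness}'' is misplaced: that proof only enlarges $C$ in the directions of the \emph{existing} components of the boundaries to gain room; the condition that the components span $NS(X)$ is an assumption there, not something it produces. The paper spends the bulk of its argument on this. It first reduces to the klt case via the cover (as you do), then passes to a log resolution so that $X$ becomes regular --- in particular $\mathbb{Q}$-factorial, a hypothesis of \autoref{klt_finiteness} that you invoke (``$X_i$ $\mathbb{Q}$-factorial as an open subscheme of $X$'') without justification. Only then does it introduce new ample divisors $H_k$ spanning $NS(X)$ and build a linearly equivalent replacement polytope $C'$ (via a careful choice of $A',E,t$ with $A'+(1-t)A+H+B+tE\sim A+B$, using \autoref{bertini} and a second pass to a witnessing cover) whose boundaries all have components spanning $NS(X_i)$ for every $i$. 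Your outline becomes correct once this construction is supplied, but without it the appeal to \autoref{klt_finiteness} is not yet warranted.
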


	\begin{proof}
	
	We prove 1., 2. follows immediately as ample models correspond to the interiors of faces of the $\mathcal{W}_{\phi_{i}}(C)$ by \autoref{faces}.\\	
	
	Equally, it is enough to show this in the case that $C$ is a klt polytope. Indeed suppose it holds for klt polytopes. Then take an open cover $U_{i}$ of $T$ witnessing $C$. For each $i$ we may take a klt polytope $C'_{i}$ with $C'_{i} \sim C_{i}=C|_{U_{i}}$. Given a wlc map $\phi:X \dashrightarrow Z$ for $B \in \mathcal{E}(C)$, we can let $\phi_{i}$ be the induced map on $X_{i}$ which is a wlc model for some $B_{i} \in C'_{i}$. In particular for fixed $i$ there are finitely many $\phi_{i,j}$ such that for any $B$ and $\phi$ we have $f_{i} \circ \phi_{i}=\phi_{i,j}$ for some $j$ and $f_{i}$. As $U_{i}$ is a finite cover there are finitely many $\phi_{i,j}$ indexed over $i,j$.
	
	If we have another map $\Phi:X \dashleftarrow Z'$ with isomorphisms $g_{i}$ such that $f_{i} \circ \phi_{i}=\phi_{i,j}= g_{i} \circ \Phi_{i}$, then $h_{i}=g_{i} \circ f^{-1}_{i}$ glues to an isomorphism $Z' \to Z$ over $T$. Thus there are only finitely many wlc models up to isomorphism.
	
	Suppose then that $C$ is a klt polytope.		
	
	Let $\pi:Y \to X$ be a log resolution of the support of $V$. Then for any $\Delta$ in $C$ we have $\pi^{*}(K_{X}+\Delta)+E=(K_{Y}+\Delta')$ where $E \geq $ is exceptional and shares no components with $\Delta'$ and $(Y,\Delta')$ is klt. Sending $\Delta \to \Delta'$ as above we can find a new polytope $C'$ on which it is sufficient to check the result holds. By replacing $C$ with $C'$, $A$ with $\pi^{*}A$, $X$ with $Y$ and $V$ with a suitable space, we may suppose that $X$ is regular, though it may no longer be the case that $A$ shares no support with $V$. 
	
	Let $H_{k}$ be ample divisors spanning $NS(X)$ and sharing no components with $A$ or $V$. Let $H= \sum H_{k}$. Note that for any open $U$ in $T$ we still have the components of $H|_{X_{U}}$ span $NS(X_{U})$, since $NS(X)$ surjects on $NS(X_{U})$ by $\mathbb{Q}$-factoriality of $X$.
	
	After shrinking $H$ we may take some $A',E \geq 0$ and a small $t>0$ such that:
	
	\begin{itemize}
		\item $E \geq 0$ shares no components with $H$;
		\item $A-E$ is ample;
		\item $t(A-E)-H \simeq A' > 0$ is ample and shares no components with $V,H$ or $E$;
		\item $\{A+H+B+tE \colon A+B \in C\}$ is a klt polytope; and
		\item $C'=\{A'+(1-t)A+H+B+tE \colon A+B \in C\}$ is an rlt polytope.
	\end{itemize}
	That we can choose $C'$ to be rlt follows from \autoref{bertini}. Note that $A'+(1-t)A+H+B+tE \simeq A+B$ by construction. Thus it suffices to check the result for $C'$ since $C'\subseteq \mathcal{L}_{H}(W)$ for some coefficient space $W$. As above, by taking an open cover, we may in fact assume that $C'$ is klt. But then the result follows by \autoref{klt_finiteness}, since the components of $H$ span $NS(X)$ by construction.	
	
\end{proof}

\begin{theorem}\label{rltfiniteness}
	Let $A$ be an ample $\mathbb{Q}$-Cartier divisor and $C$ be a rational polytope inside $\mathcal{RL}_{A}(V)$. Suppose there is a boundary $A+B \in \mathcal{RL}_{A}(V)$ such that $(X,A+B)$ is rlt with witnesses in $V_{A}$. Then the following hold:
	
	\begin{enumerate}
		\item There are finitely many birational contractions $\phi_{i}:X \dashrightarrow Y_{i}$ such that 
		\[\mathcal{E}(C) = \bigcup \mathcal{W}_{i}=\mathcal{W}_{\phi_{i}}(C)\]
		where each $\mathcal{W}_{i}$ is a rational polytope. Moreover if $\phi:X \to Y$ is a wlc model for any choice of $\Delta \in \mathcal{E}(C)$ then $\phi=\phi_{i}$ for some $i$, up to composition with an isomorphism.
		
		\item There are finitely many rational maps $\psi_{j}:X \dashrightarrow Z_{j}$ which partition $\mathcal{E}(C)$ into subsets $\mathcal{A}_{\psi_{j}}(C)=\mathcal{A}_{i}$.
		\item  For each $W_{i}$ there is a $j$ such that we can find a morphism $f_{i,j}: Y_{i} \to Z_{j}$ and $W_{i} \subseteq \overline{A_{j}}$.
		\item  $\mathcal{E}(C)$ is a rational polytope and $A_{j}$ is a union of the interiors of finitely many rational polytopes.
	\end{enumerate}
	
	If $C$ is an rlt polytope then $A$ big suffices.
\end{theorem}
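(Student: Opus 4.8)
The plan is to assemble all four conclusions out of \autoref{weak finiteness}, which already produces the finite families of wlc models $\phi_i$ and ample models $\psi_j$ as soon as we are working with an rlt polytope over a big divisor, and then to upgrade this to the structural statements (3) and (4) using \autoref{faces}, \autoref{rationality} and some convexity. I would argue in two stages: first the case where $C$ is itself an rlt polytope and $A$ is merely big (this is precisely the last sentence of the theorem), and then the general case with $A$ ample, which I reduce to the first by triangulating $C$ and invoking \autoref{rlt-repl}.

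For the first stage, suppose $C$ is an rlt polytope and $A$ is big. Conclusion (1) is \autoref{weak finiteness}(1), together with the fact that each $\mathcal{W}_i=\mathcal{W}_{\phi_i}(C)$ is a rational polytope by \autoref{rationality}; the equality $\mathcal{E}(C)=\bigcup_i\mathcal{W}_i$ holds because every pseudoeffective $\Delta\in C$ has a log terminal model by \autoref{rltmmp}, which is in particular a wlc model and hence, up to isomorphism, one of the $\phi_i$. Conclusion (2) is \autoref{weak finiteness}(2); that the $\mathcal{A}_{\psi_j}(C)$ actually partition $\mathcal{E}(C)$ uses that each $\Delta\in\mathcal{E}(C)$ has a unique ample model, obtained by running the MMP of \autoref{rltmmp} to a good log terminal model. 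For (3), fix $i$; a boundary in the relative interior of $\mathcal{W}_i$ has an ample model, so \autoref{faces}, applied with $F=\mathcal{W}_i$, yields a morphism $f_{i,j}\colon Y_i\to Z_j$ with $\psi_j=f_{i,j}\circ\phi_i$ and shows this $\psi_j$ is the ample model for every boundary in the relative interior of $\mathcal{W}_i$; hence that relative interior lies in $A_j$, and taking closures gives $\mathcal{W}_i\subseteq\overline{A_j}$. For (4): $\mathcal{E}(C)=\bigcup_i\mathcal{W}_i$ is a finite union of rational polytopes and is convex, since a convex combination of pseudoeffective classes is pseudoeffective, so it equals the convex hull of the finitely many rational vertices of the $\mathcal{W}_i$ and is therefore a rational polytope; and, since every element of $\mathcal{E}(C)$ lies in the relative interior of a unique face of some $\mathcal{W}_i$ and the ample model is constant on each such relative interior by \autoref{faces}, each $A_j$ is the union of the relative interiors of those finitely many faces whose ample model is $\psi_j$.

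For the second stage, let $A$ be ample and $C$ an arbitrary rational polytope with a boundary $A+B\in\mathcal{RL}_A(V)$ for which $(X,A+B)$ is rlt with witnesses in $V_A$. Fix a rational triangulation $C=\bigcup_\ell C_\ell$ into rational simplices. For each $\ell$, \autoref{rlt-repl}, which applies because $A$ is ample and the rlt boundary $A+B$ lies in $\mathcal{RL}_A(V)$, gives an affine bijection $f_\ell\colon C_\ell\to C'_\ell$ onto an rlt polytope $C'_\ell\subseteq\mathcal{RL}_{A/2}(W_\ell)$ which preserves rationality and $\mathbb{Q}$-linear equivalence; as $A/2$ is still big, the first stage applies to each $C'_\ell$. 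By \autoref{equiv}, $\phi$ is a wlc model (resp.\ $\psi$ an ample model) for $(X,\Delta)$ if and only if it is one for $(X,f_\ell(\Delta))$, and pseudoeffectivity of $K_X+\Delta$ transports likewise; hence $\mathcal{W}_\phi(C_\ell)=f_\ell^{-1}(\mathcal{W}_\phi(C'_\ell))$, $\mathcal{A}_\psi(C_\ell)=f_\ell^{-1}(\mathcal{A}_\psi(C'_\ell))$ and $\mathcal{E}(C_\ell)=f_\ell^{-1}(\mathcal{E}(C'_\ell))$ are, respectively, a rational polytope, a union of relative interiors of finitely many rational polytopes, and a rational polytope. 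Because whether a birational or rational map is a wlc or ample model of $(X,\Delta)$ does not depend on which simplex contains $\Delta$, we have $\mathcal{W}_\phi(C)=\bigcup_\ell\mathcal{W}_\phi(C_\ell)$, $\mathcal{A}_\psi(C)=\bigcup_\ell\mathcal{A}_\psi(C_\ell)$ and $\mathcal{E}(C)=\bigcup_\ell\mathcal{E}(C_\ell)$. Taking the union over $\ell$ of the finite lists of models furnished by the first stage yields finitely many $\phi_i$ and $\psi_j$, and the uniqueness (``moreover'') clauses follow since any wlc or ample model of a $\Delta\in\mathcal{E}(C)$ is a wlc or ample model of $f_\ell(\Delta)$ in whichever simplex contains $\Delta$. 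Finally (3) and (4) are deduced exactly as in the first stage: each $\mathcal{W}_i=\mathcal{W}_{\phi_i}(C)$ is convex (convex combinations preserve $(K_X+\Delta)$-non-positivity and nefness of $K_Y+\Delta_Y$) and a finite union of rational polytopes, hence a rational polytope; $\mathcal{E}(C)$ is convex and a finite union of rational polytopes, hence a rational polytope; and each $A_j$ is a finite union of relative interiors of rational polytopes.

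The step I expect to be most delicate is this last reassembly across the triangulation: one has to check that the pieces $\mathcal{W}_\phi(C_\ell)$, $\mathcal{A}_\psi(C_\ell)$ and $\mathcal{E}(C_\ell)$ coming from different simplices really do glue into a single rational polytope (respectively a union of relative interiors of rational polytopes), which rests on the convexity of $\mathcal{E}(C)$ and of each $\mathcal{W}_{\phi_i}(C)$ and on the compatibility of relative interiors with the affine bijections $f_\ell$ and with passage between a simplex and the ambient polytope. Everything else is a direct application of \autoref{weak finiteness}, \autoref{rationality}, \autoref{faces}, \autoref{rlt-repl}, \autoref{equiv} and \autoref{rltmmp}.
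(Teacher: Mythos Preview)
Your proposal is correct and follows essentially the same approach as the paper: triangulate $C$, use \autoref{rlt-repl} to pass to rlt polytopes, and then assemble the conclusions from \autoref{weak finiteness}, \autoref{rationality}, \autoref{faces} and \autoref{rltmmp}. The only difference is organisational---you prove the rlt polytope case first and then reduce to it, whereas the paper reduces to simplices immediately and dispatches the convexity gluing in one sentence---but the ingredients and logic are the same, and your more careful treatment of the reassembly across simplices is if anything an improvement.
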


\begin{proof}
	
	Since the convexity condition of every sub-polytope in the theorem statement is clear, it is enough to show that the result holds for every simplex in a rational triangulation of $C$. Thus after extending $V$ and changing $A$ as needed we may suppose:
	
	\begin{itemize}
		\item $C$ is a simplex;
		\item $C$ is an rlt polytope by \autoref{rlt-repl};
		\item $\mathcal{E}(C)$ is covered by $\mathcal{W}_{\phi_{i}}(C)$  and has a decomposition into disjoint sets $\mathcal{A}_{\psi_{j}}(C)$
		for some collection of birational contractions $\phi_{i}$ and rational maps $\psi_{j}$ by \autoref{rltmmp}; and
			\item There are only finitely many $\phi_{i}$ and $\psi_{j}$ by \autoref{weak finiteness}.
			\end{itemize}
			
			Take one of the wlc models $\phi_{i}:X \dashrightarrow Y_{i}$ , then just as in Lemma $\ref{faces}$, if $\Delta,\Delta'$ are in the same face of $\mathcal{W}_{i}$ then they have the same ample model. In particular then let $\psi_{j}:X \dashrightarrow Z_{j}$ be the ample model corresponding to the interior of $\mathcal{W}_{i}$, then we have a morphism $f_{i,j}: Y_{i} \to Z_{j}$ and  $W_{i} \subseteq \overline{A_{j}}$ as required. 
			
			Similarly by \autoref{faces} we have that $A_{j} \cap \mathcal{W}_{i}$ is a union of the interiors of some faces of $\mathcal{W}_{i}$. Since there are finitely many $\mathcal{W}_{i}$ and they cover $\mathcal{E}(C)$ the result follows.
		\end{proof}
		
		\begin{remark}
			In practice since we can always extend $V$ and $C$ it is enough to know that $(X,A)$ is klt, rather than needing an rlt pair $(X,A+B)$. Similarly if $X$ is klt, we can always find $t>0$ such that $(X,tA)$ is klt. Then if $(X,A+B)=(X,tA+(1-t)A+B)$ is rlc with coefficients in $V_{A}$ it is also rlc with witnesses in $V'_{tA}$ for some coefficient space $V'$. By choosing $V'$ such that all the vertices of $C$ are rlc with witness in $V'_{tA}$, we see that it is enough to suppose that $X$ is klt.
		\end{remark}
		
\section{Geography of Ample Models}

We keep the notation of the previous section, though we denote the closure of $\mathcal{A}_{\phi}(C)$ by $\mathcal{D}_{\phi}(C)$. As always $R$ will be an excellent ring with dualising complex, $T$ will be a quasi-projective $R$ scheme and all other schemes will be integral and projective, surjective morphism to $T$ over $R$. All pairs will be considered as $R$ pairs over $T$. Unlike in previous sections, we will work with $A$ ample throughout.

We will say the span of a polytope $C$ is $$\text{Span}(C)=\{\lambda(B-B') \text{ such that } B, B' \in C \text{ and } \lambda \in \mathbb{R}\}.$$  In a slight abuse of notation we say that $C \subseteq WDiv(X)$ spans $NS(X)$ if the span of $C$ surjects onto $NS(X)$. Equivalently this means if $D$ is a divisor and $B$ is in the interior of $C$ then for all sufficiently small $t>0$ $B+tD\equiv D'_{t}$ for some $D'_{t}\in C$.

\begin{lemma}\label{close}
	Let $X\to T$ be a $\mathbb{Q}$-factorial, klt threefold over $R$. Let $\phi:X \dashrightarrow Y$ be a wlc model of an rlc pair $(X,\Delta)/T$. Let $A \geq 0$ be an ample $\mathbb{Q}$-divisor and $C$ be a polytope inside $\mathcal{L}_{A}(V)$. Then we have that $\mathcal{D}_{\phi}(C):=\overline{\mathcal{A}_{\phi}(C)} \subseteq \mathcal{W}_{\phi}(C)$ is a rational polytope, moreover if $C$ spans $NS(X)$ and contains an open set around $\Delta$ then this inclusion is an equality.
\end{lemma}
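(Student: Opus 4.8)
The plan is to prove the three assertions in turn --- the inclusion $\mathcal{D}_\phi(C)\subseteq\mathcal{W}_\phi(C)$, that $\mathcal{D}_\phi(C)$ is a rational polytope, and finally equality under the extra hypotheses --- with everything ultimately controlled by the nef and ample cones of the fixed target $Y$.

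First I would show $\mathcal{A}_\phi(C)\subseteq\mathcal{W}_\phi(C)$. Fix $\Delta'\in\mathcal{A}_\phi(C)$, so $\phi$ is the ample model of $(X,\Delta')$. Since $\Delta'\in\mathcal{E}(C)$ is a pseudo-effective, big boundary (as $\Delta'\geq A$) with $(X,\Delta')$ lc, it admits a wlc model --- run the MMP via \autoref{rltmmp} after a dlt modification and reduction to the klt case. The standard relation between a wlc model and the (unique) ample model, as already used in the proof of \autoref{faces}, then produces a morphism from that wlc model to $Y$ through which the ample divisor on $Y$ pulls back to $K_{Y''}+\Delta'_{Y''}$; pushing forward shows $K_Y+\Delta'_Y$ is ample, in particular nef, and that $\phi$ is $(K_X+\Delta')$-non-positive, i.e.\ $\phi$ is a wlc model of $(X,\Delta')$. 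Now $\mathcal{W}_\phi(C)$ is cut out of $C$ by the closed conditions ``$K_X+\Delta'$ pseudo-effective'', ``$p^*(K_X+\Delta')-q^*(K_Y+\Delta'_Y)$ effective'' and ``$K_Y+\Delta'_Y$ nef'' (the intermediate $q$-exceptionality survives a limit because the $q$-pushforward of the defect stays zero while the defect stays effective), so $\mathcal{W}_\phi(C)$ is closed and hence contains $\overline{\mathcal{A}_\phi(C)}=\mathcal{D}_\phi(C)$.

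For rationality, $\mathcal{W}_\phi(C)$ is itself a rational polytope: this is \autoref{rationality} after reducing to an rlt polytope by a dlt modification of $X$ (alternatively \cite[Corollary 3.11.2]{birkar2010existence} for the lc case). For $\Delta'\in\mathcal{W}_\phi(C)$ one has $\phi=$ ample model of $(X,\Delta')$ exactly when $K_Y+\Delta'_Y$ is ample (the ``if'' again by the negativity lemma applied to $p^*(K_X+\Delta')-q^*(K_Y+\Delta'_Y)$), so $\mathcal{A}_\phi(C)=\{\Delta'\in\mathcal{W}_\phi(C):K_Y+\Delta'_Y\text{ ample}\}$; this is convex, since a convex combination of ample classes is ample and $\mathcal{W}_\phi(C)$ is convex. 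Running the argument of \autoref{faces} --- boundaries in the relative interior of a common face of $\mathcal{W}_\phi(C)$ have the same ample model --- shows $\mathcal{A}_\phi(C)$ is a union of relative interiors of faces of $\mathcal{W}_\phi(C)$. A convex set that is a union of relative interiors of faces of a polytope has all its extreme points among the vertices of that polytope, so $\mathcal{D}_\phi(C)=\overline{\mathcal{A}_\phi(C)}$ is the convex hull of finitely many rational points, hence a rational polytope.

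Finally, for the equality when $C$ spans $NS(X)$ and contains a neighbourhood of $\Delta$, it suffices to prove $\mathcal{A}_\phi(C)$ is dense in $\mathcal{W}_\phi(C)$. By the above, $\mathcal{W}_\phi(C)\setminus\mathcal{A}_\phi(C)$ is the locus where $K_Y+\Delta'_Y$ is nef but not ample, hence lies in the preimage of the boundary of the nef cone of $Y$ under the affine map $\Delta'\mapsto K_Y+\Delta'_Y$; as the nef cone is convex, this preimage is nowhere dense in $\mathcal{W}_\phi(C)$ unless $\mathcal{W}_\phi(C)$ maps entirely into a supporting hyperplane $\{\,\cdot\,\Gamma=0\}$ of the nef cone for some curve $\Gamma$ on $Y$. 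The neighbourhood hypothesis guarantees $\mathcal{W}_\phi(C)$ is full dimensional in $C$, and then the spanning hypothesis rules out such a hyperplane: for a lift $\widetilde\Gamma$ of $\Gamma$ to the common resolution, the functional $D\mapsto(p^{*}D\cdot\widetilde\Gamma)$ is already non-constant on $NS(X)$ (it pairs non-trivially with the pullback of an ample class from $Y$), hence non-constant on $\mathcal{W}_\phi(C)$ since $\text{Span}(\mathcal{W}_\phi(C))=\text{Span}(C)$ surjects onto $NS(X)$. I expect the genuinely delicate point to be exactly this passage from ``$C$ contains a neighbourhood of $\Delta$'' to ``$\mathcal{W}_\phi(C)$ is full dimensional'': one must verify that $\phi$ remains a wlc model under small perturbations of $\Delta$ inside $C$, which requires the support of the defect of $\phi$ at $\Delta$ to meet enough of the $q$-exceptional locus --- this should be extracted from the construction of $\phi$ via the MMP together with $\Delta$ being an interior point of $C$.
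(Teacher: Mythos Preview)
Your treatment of the inclusion $\mathcal{D}_\phi(C)\subseteq\mathcal{W}_\phi(C)$ and of rationality is essentially the paper's argument, fleshed out: the paper simply cites \cite[Theorem 3.6.5]{birkar2010existence} for $\mathcal{A}_\phi(C)\subseteq\mathcal{W}_\phi(C)$, then observes that $\mathcal{A}_\phi(C)$ is convex and a union of interiors of faces of $\mathcal{W}_\phi(C)$ (via \autoref{faces}), so its closure is a face of $\mathcal{W}_\phi(C)$.

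For the equality, your route diverges from the paper's and the gap you flag is real. You try to show $\mathcal{A}_\phi(C)$ is dense by arguing that the locus where $K_Y+\Delta'_Y$ is nef but not ample is the preimage of the nef boundary, hence nowhere dense \emph{provided} $\mathcal{W}_\phi(C)$ is full dimensional in $C$; you then need the full-dimensionality, which you correctly identify as unproven. The paper avoids this entirely by an explicit perturbation: fix an ample $H$ on $Y$, take a common resolution $p\colon W\to X$, $q\colon W\to Y$, and use the spanning hypothesis together with the open neighbourhood of $\Delta$ to find $H'\equiv p_*q^*H$ supported on the support of $\Delta$ (hence on the support of any $B$ in the interior of $\mathcal{W}_\phi(C)$). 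Then for such $B$ and small $\epsilon'>0$ one has $B+\epsilon'H'\in C$, $\phi$ is $(K_X+B+\epsilon'H')$-negative, and $K_Y+\phi_*(B+\epsilon'H')\equiv (K_Y+\phi_*B)+\epsilon' H$ is ample; so $\phi$ is the ample model of $(X,B+\epsilon'H')$ and $B\in\overline{\mathcal{A}_\phi(C)}$. This works for every interior point of $\mathcal{W}_\phi(C)$ without ever establishing that $\mathcal{W}_\phi(C)$ has the same dimension as $C$, so the delicate step you single out simply does not arise.
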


\begin{proof}
	Suppose that $B \in \mathcal{A}_{\phi}(C)$. Then by \cite[Theorem 3.6.5]{birkar2010existence} we see that in fact $\phi$ is a wlc model for $B$ and thus we have $\mathcal{A}_{\phi}(C) \subseteq \mathcal{W}_{\phi}(C)$. So $\overline{\mathcal{A}_{\phi}(C)}$ is a union of faces of $\mathcal{W}_{\phi}(C)$ by \autoref{faces} and \autoref{rlt-repl}. However $\mathcal{A}_{\phi}$ is convex inside $\mathcal{W}_{\phi}(C)$ so it must be that $\overline{\mathcal{A}_{\phi}(C)}$ is a face of $\mathcal{W}_{\phi}(C)$, and thus is a polytope.
	
	Now suppose $C$ spans $NS(X)$ and contains an open set around $\Delta$.
	Let $H$ be a general ample divisor on $Y$. Let $W$ be a common resolution with maps $p:W \to X$, $q:W \to Y$. Then by assumption there is some $H' \equiv p_{*}q^{*}H$ with support contained in the support of $\Delta$, and hence in the support of any $B$ in the interior of $\mathcal{W}_{\phi}(C)$. Take such a $B$, then there is $\epsilon >0$ with $(X,B+\epsilon H') \in C$ , for any $\epsilon' \in ((0,\epsilon])$ $\phi$ is an ample model of $(X,B+ \epsilon' H')$, such an $\epsilon$ exists since $\phi$ is necessarily $H'$ negative. Thus $B+\epsilon' H' \in \mathcal{A}_{\phi}(C)$. But then we must have $\mathcal{W}_{\phi}(C)\subseteq \overline{\mathcal{A}_{\phi}(C)}$.
\end{proof}

\begin{theorem}\label{assumptions}\cite[Theorem 3.3]{hacon2009sarkisov}
	Let $C$ be a polytope inside $\mathcal{RL}_{A}(V)$, then there are finitely many maps $f_{i}:X \dashrightarrow Y_{i}$ over $T$ with the following. properties.
	
	\begin{enumerate}
		\item $\{A_{i}=A_{f_{i}}(C)\}$ partition $\mathcal{E}(C)$. If $f_{i}$ is birational then $\mathcal{D}_{i}=\mathcal{D}_{\phi_{i}}(C)$ is a rational polytope.
		\item If $A_{j} \cap \mathcal{D}_{i} \neq \emptyset$ then there is a morphism $f_{i,j}:Y_{i} \to Y_{j}$ such that $f_{i}=f_{i,j} \circ f_{j}$.
	\end{enumerate}
	
	Moreover if $C$ spans $NS(X)$ then we also have the following.
	
	\begin{enumerate}
		\item[3.] Pick $i$ such that a connected component, $\mathcal{D}$ of $\mathcal{D}_{i}$ meets the interior of $C$. Then the following are equivalent:
		\begin{itemize}
			\item $\dim \mathcal{D}= \dim C$.
			\item $\text{Span}(\mathcal{D})=\text{Span}(C)$.
			\item If $B \in \mathcal{A}_{i} \cap \mathcal{D}$ then $f_{i}$ is a log terminal model of $(X,B)$.
			\item $f_{i}$ is birational and $X_{i}$ is $\mathbb{Q}$-factorial.
		\end{itemize} 
		
		\item[4.] Suppose that $\mathcal{D}_{i}$ has the same span as $C$ and $B$ is a general point in $\mathcal{A}_{j} \cap \mathcal{D}_{i}$. If in fact $B$ is in the interior of $C$ then the relative picard number of $Y_{i}/Y_{j}$ is the difference in dimension of $\mathcal{D}_{i}$ and $\mathcal{D}_{j} \cap \mathcal{D}_{i}$. 
		
	\end{enumerate}
\end{theorem}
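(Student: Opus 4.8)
I would take the maps $f_i$ to be the finitely many ample models $\psi_j\colon X\dashrightarrow Z_j$ produced by \autoref{rltfiniteness}, together with the birational contractions $\phi_k\colon X\dashrightarrow Y_k$ and factoring morphisms $f_{k,j}\colon Y_k\to Z_j$ supplied there. Part (1) is then a repackaging: the sets $\mathcal{A}_i=\mathcal{A}_{f_i}(C)$ partition $\mathcal{E}(C)$ by \autoref{rltfiniteness}, and if a given $f_i$ is birational then, up to composition with an isomorphism, it agrees with one of the $\phi_k$, hence is a wlc model of $(X,B)$ for every $B\in\mathcal{A}_i$ by \cite[Theorem 3.6.5]{birkar2010existence}. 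Thus $\mathcal{A}_i\subseteq\mathcal{W}_{\phi_k}(C)$, and since $\mathcal{A}_i$ is convex its closure $\mathcal{D}_i$ is a union of faces of the rational polytope $\mathcal{W}_{\phi_k}(C)$ (\autoref{rationality}) by \autoref{faces}, hence a single face, hence a rational polytope; this is \autoref{close} applied one $\phi$ at a time.

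For part (2), suppose $A_j\cap\mathcal{D}_i\neq\emptyset$ with $f_i$ birational, and pick $B$ in the intersection. Then $B\in\mathcal{D}_i\subseteq\mathcal{W}_{\phi_i}(C)$, so $f_i$ is a wlc model of $(X,B)$ while $f_j$ is the ample model of $(X,B)$; since any ample model factors through any wlc model of the same pair --- by the argument already used in \autoref{faces} and in the proof of uniqueness of ample models --- there is a comparison morphism $f_{i,j}$ between $Y_i$ and $Y_j$ realising the asserted factorisation.

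Parts (3) and (4) carry the real content; here I would use the standing hypotheses that $X$ is $\mathbb{Q}$-factorial and klt together with the new one that $C$ spans $NS(X)$. Equality of $\dim\mathcal{D}$ and $\dim C$ is equivalent to $\operatorname{Span}(\mathcal{D})=\operatorname{Span}(C)$ by convex geometry (a polytope has the dimension of its span, and a subpolytope meeting $\operatorname{int}(C)$ with full span contains a relatively open piece of $C$), and the condition that $f_i$ be a log terminal model on $\mathcal{A}_i\cap\mathcal{D}$ manifestly implies that $f_i$ is birational with $X_i$ $\mathbb{Q}$-factorial. Conversely, if $X_i$ is $\mathbb{Q}$-factorial and $f_i$ birational, I would pick $\Delta\in\mathcal{A}_i\cap\operatorname{int}(C)$ (non-empty, as $\mathcal{A}_i$ is dense in $\mathcal{D}_i=\mathcal{D}$ and $\mathcal{D}$ meets $\operatorname{int}(C)$): then $K_{Y_i}+(\phi_i)_*\Delta$ is ample, so \autoref{close} gives $\mathcal{D}_i=\mathcal{W}_{\phi_i}(C)$, and perturbing $\Delta$ in every numerical direction (possible since $C$ spans $NS(X)$ and $\Delta$ is interior) keeps the pushforward ample and $f_i$ non-positive, so $\mathcal{D}_i$ is full-dimensional, i.e. $\dim\mathcal{D}=\dim C$. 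The remaining implication --- that $\operatorname{Span}(\mathcal{D})=\operatorname{Span}(C)$ forces $f_i$ to be a log terminal model of every $(X,B)$ with $B\in\mathcal{A}_i\cap\mathcal{D}$ --- I would prove by the perturbation argument of \cite[Theorem 3.3]{hacon2009sarkisov}: moving $B$ inside $\mathcal{D}_i$ in every numerical direction shows that every class on $Y_i$ becomes, after adding a pullback, the pushforward of a semiample class, whence $Y_i$ is $\mathbb{Q}$-factorial, and then that $f_i$ contracts precisely the $\phi_i$-exceptional divisors, negatively. For part (4) I would feed the morphism $f_{i,j}\colon Y_i\to Y_j$ of part (2) into the relation $\rho(Y_i)=\rho(Y_j)+\rho(Y_i/Y_j)$ and match $\dim\mathcal{D}_i$, resp. $\dim(\mathcal{D}_j\cap\mathcal{D}_i)$, with $\rho(Y_i)$, resp. $\rho(Y_j)$, up to a common constant: since $\mathcal{D}_i$ has full span and $B$ is a general interior point, the directions in which $B$ moves inside $\mathcal{D}_i$ form, modulo the fixed numerically trivial and boundary-constraining directions, the pullback of the ample cone of $Y_i$, and those additionally keeping $f_j$ the ample model form the pullback of the ample cone of $Y_j$.

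The step I expect to be the main obstacle is the implication $\operatorname{Span}(\mathcal{D})=\operatorname{Span}(C)\Rightarrow f_i$ a log terminal model in part (3), and within it the $\mathbb{Q}$-factoriality of $Y_i$. The ample models here are built one open set $U_\ell$ of a witnessing cover at a time, so the Hacon--McKernan perturbation argument has to be run locally over $T$ and then glued; this should go through because, as in the proof of \autoref{weak finiteness}, $NS(X)$ surjects onto each $NS(X_{U_\ell})$, so $C$ spanning $NS(X)$ gives enough room on every $U_\ell$ --- but verifying that $\mathbb{Q}$-factoriality descends from the cover and that the choices of witnesses are compatible is the delicate point. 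Everything else is bookkeeping over the cover together with the convex geometry of $\mathcal{E}(C)$ already set up in \autoref{rltfiniteness}.
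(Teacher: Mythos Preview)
Your proposal is more detailed than the paper's own treatment: the paper gives no proof at all, merely stating after the theorem that ``This result is stated for $C=\mathcal{L}_{A}(V)$ in characteristic zero, but the proof goes through essentially verbatim in this setting.'' What you have written is precisely the Hacon--McKernan argument transported into the rlt framework using the tools already assembled (\autoref{rltfiniteness}, \autoref{rationality}, \autoref{faces}, \autoref{close}), so your approach and the paper's are the same in spirit --- you have simply unpacked what ``verbatim'' means here.

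Your identification of the $\mathbb{Q}$-factoriality step in (3) as the place requiring care is sound, and your plan to run the perturbation locally over a witnessing cover and then glue is the natural way to do it; the paper evidently regards this as routine enough not to warrant comment. One minor point: in part (2) the factorisation should read $f_j = f_{i,j}\circ f_i$ (ample model through wlc model), not $f_i = f_{i,j}\circ f_j$ as printed in the statement --- your description (``the ample model factors through any wlc model'') has it the right way round, so this is a typo in the paper rather than an error on your part.
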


This result is stated for $C=\mathcal{L}_{A}(V)$ in characteristic zero, but the proof goes through essentially verbatim in this setting.

For brevity we fix some notation, essentially due to Shokurov.

\begin{definition}
	Take a coefficient space $V$, an ample divisor $A$ and then let $C$ be a polytope inside some $\mathcal{RL}_{A}(V)$.
	
	Suppose that $3$ and $4$ of the previous lemma hold for $C$, then the triple $(C,A,V)$ is a said to be a geography, when $A$ and $V$ are clear we sometimes just call $C$ a geography. The dimension of $(C,A,V)$ will be the dimension of $C$.
	The $\mathcal{D}_{\phi}$ are called classes.
	If $C$ is a geography and $\dim \mathcal{D_{\phi}}= \dim C$ then $\mathcal{D}_{\phi}$ is said to be a country. The codimension $1$ faces of countries are called borders, and a codimension $2$ face is called a ridge.
	If $(X,B)$ is a pair such that every country in $C$ is induced by a log terminal model of $(X,B)$ then $(C,A,V)$ is a geography for $(X,B)$.
\end{definition}

\autoref{assumptions} then says that if $(C,A,V)$ is a triple such that $C$ spans $NS(X)$ then $C$ is a geography.
This combined with following will be the main method of producing geographies for the remainder of the section.

\begin{lemma}
	Let $(C,A,V)$ be a geography. Take $W \subseteq V$ be a general coefficient space and let $W_{A}=\{A+B, B \in W\}$ then $C'=C \cap W_{A}$ is a geography. 
\end{lemma}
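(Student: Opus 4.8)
The plan is to reduce the statement entirely to \autoref{assumptions}, which shows that any rational polytope inside $\mathcal{RL}_A(V)$ spanning $NS(X)$ automatically satisfies its parts 3 and 4 — and this is precisely what it means to be a geography (as remarked just above, this is also the mechanism by which geographies are produced in this section). So the only thing to prove is that, for a general coefficient space $W\subseteq V$, the set $C'=C\cap W_A$ is a non-empty rational polytope spanning $NS(X)$. Rationality and convexity are immediate, $C'$ being the intersection of the rational polytope $C$ with the rational affine subspace $W_A$; and since $C'\subseteq C\subseteq\mathcal{RL}_A(V)$, \autoref{assumptions} applies verbatim to the triple $(C',A,V)$ (one may pass to a smaller witnessing coefficient space afterwards if one prefers $C'\subseteq\mathcal{RL}_A(W)$).

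First I would record that $C$ itself spans $NS(X)$: being a geography it satisfies parts 3 and 4 of \autoref{assumptions}, whose hypothesis is exactly this. Hence $\text{Span}(C)\subseteq\hat V$ surjects onto $N^1(X):=NS(X)\otimes\mathbb{R}$, with kernel $K$ of codimension $\rho$, where $\rho$ is the rank of $NS(X)$. I would then choose $W$ general, of dimension large enough (at least $\rho$, with an evident increase if $C$ is not full-dimensional in $V$) and passing through the interior of $C$, so that $C'$ is non-empty and $\text{Span}(C')=\hat W\cap\text{Span}(C)$. The point is the standard general-position fact that, for general $\hat W$ of sufficiently large dimension, $\hat W\cap\text{Span}(C)$ meets $K$ in the expected (minimal) dimension, so that $(\hat W\cap\text{Span}(C))+K=\text{Span}(C)$ and hence $\text{Span}(C')$ still surjects onto $N^1(X)$. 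In the language of the section: every numerical class has a representative in $\text{Span}(C')$ because the same holds for $\text{Span}(C)$ and a general linear section of sufficiently large dimension inherits this property. Thus $C'$ spans $NS(X)$, and \autoref{assumptions} finishes the proof: the classes $\mathcal{D}_i(C')$ are rational polytopes and parts 3 and 4 hold for $C'$, i.e. $C'$ is a geography.

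The only real content — and the step I expect to need care — is this general-position argument. One must ensure that "general $W$" can be chosen to simultaneously meet $C$ in a polytope of positive dimension and to avoid the finitely many linear conditions that would collapse the span (the kernel $K$, together with the hyperplanes supporting the chamber decomposition of $C$). This forces the mild lower bound on $\dim W$ noted above, which is implicit in the word "general" in the statement. Everything else — rationality and convexity of $C'$, rationality of the classes $\mathcal{D}_i(C')$, and the structural conclusions 3 and 4 — is not reproved by hand but imported wholesale from \autoref{assumptions}.
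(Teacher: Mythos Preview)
Your reduction to \autoref{assumptions} via the spanning condition has a genuine gap, and it is precisely at the point of application that the argument fails. The implication proved in the paper is ``$C$ spans $NS(X)$ $\Rightarrow$ $C$ is a geography'', not the converse; the definition of a geography only asks that conditions 3 and 4 of \autoref{assumptions} hold, and this does \emph{not} force $C$ to span $NS(X)$. Your claimed deduction that a geography automatically spans $NS(X)$ is backwards.

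More seriously, even granting that the ambient $C$ spans $NS(X)$, your argument for $C'$ requires $\dim W \geq \rho(X)$, since $\text{Span}(C')\subseteq \hat W$. But the entire purpose of this lemma is to manufacture \emph{low}-dimensional geographies --- in particular $2$-dimensional ones, as needed for the Sarkisov program in \autoref{geo} and \autoref{islinked}. In that regime $\hat W$ has dimension far smaller than $\rho(X)$, so $C'$ cannot possibly span $NS(X)$, and \autoref{assumptions} is simply unavailable. Your proposed restriction ``$\dim W$ large enough'' is therefore not an innocuous side condition hidden in the word ``general'': it excludes exactly the cases the lemma is designed to cover.

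The paper's proof takes a different, more direct route. It lists the finitely many faces $F_j$ of all the classes $\mathcal{D}_i$ in the decomposition of $C$, and observes that conditions 3 and 4 are statements about the codimensions of these faces. For each fixed $F_j$, the set of $W$ in the Grassmannian for which $W_A$ either misses $F_j$ or meets it in the expected codimension is open; intersecting these finitely many open conditions gives the meaning of ``general''. Thus a general slice preserves all the relevant codimensions, and conditions 3 and 4 transfer to $C'$ directly --- with no appeal to $C'$ spanning $NS(X)$.
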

\begin{proof}
	Index all of the faces of every polytope in the decomposition by $\mathcal{D}_{i}$ as $F_{j}$. Then for $C'$ to be a geography it is enough to know that intersecting with $W$ preserves the codimension of the $F_{j}$ meeting $W$. For fixed $j$, however the choices of $W$ such that either $W$ does not meet $F_{j}$ or $F'_{j}=F_{j} \cap W_{A} \subseteq C'$ has the same codimension as $F \subseteq C_{A}$ form an open set in the Grassmanian. Since there are finitely many faces the result holds for suitably general choice of $W$.
\end{proof}

\begin{lemma}
	Suppose $V$ is a coefficient space which spans $NS(X)$. Let $C$ be any polytope contained $\mathcal{RL}_{A}(V)$, then after perturbing the vertices by an arbitrarily small amount $(C,A,V)$ is a geography.
\end{lemma}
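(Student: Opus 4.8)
The plan is to reduce the statement to \autoref{assumptions}, using the observation recorded immediately after the definition of a geography: a triple $(C',A,V)$ is a geography as soon as the polytope $C'$ spans $NS(X)$. Thus it suffices to show that for every $\varepsilon>0$ one can move each vertex of $C$ by less than $\varepsilon$, staying inside $\mathcal{RL}_{A}(V)$, so that the resulting polytope $C'$ has $\text{Span}(C')$ surjecting onto $NS(X)$; the assertion is then exactly that $(C',A,V)$ is a geography.

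The key point is that $\mathcal{RL}_{A}(V)$ itself spans $NS(X)$, and hence so does a sufficiently general small perturbation of $C$ (assuming, as is the case in the applications, that $C$ has at least $\rho(X)+1$ vertices — a segment cannot be perturbed to span a higher rank group). To see that $\mathcal{RL}_{A}(V)$ spans $NS(X)$: by \autoref{rlt-poly} it is a rational polytope, and it is nonempty since $C$ lies in it; the hypothesis that $V$ spans $NS(X)$ says that the composite $\widehat V\hookrightarrow WDiv_{\mathbb{R}}(X)\to NS(X)$ is surjective, where $\widehat V$ denotes the linear part of $V$. Fixing a point of $\mathcal{RL}_{A}(V)$ and perturbing it in the $\widehat V$-directions by small summands (which one can take ample by the ampleness of $A$, so that \autoref{bertini} keeps us inside $\mathcal{RL}_{A}(V)$) shows $\mathcal{RL}_{A}(V)$ is full-dimensional in $V_{A}$, whence $\text{Span}(\mathcal{RL}_{A}(V))=\widehat V$ surjects onto $NS(X)$. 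Granting this, write the vertices of $C$ as $B_{0},\dots,B_{m}$; for a tuple $(B_{0}',\dots,B_{m}')\in\mathcal{RL}_{A}(V)^{m+1}$ the requirement that the numerical classes of $B_{1}'-B_{0}',\dots,B_{m}'-B_{0}'$ generate $NS(X)$ is the non-vanishing of finitely many maximal minors, so it holds on a nonempty Zariski-open set; choosing such a tuple within distance $\varepsilon$ of $(B_{0},\dots,B_{m})$ and putting $C'=\text{conv}(B_{0}',\dots,B_{m}')$ does the job.

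The last step — that $(C',A,V)$ is a geography once $C'\subseteq\mathcal{RL}_{A}(V)$ spans $NS(X)$ — is immediate from \autoref{assumptions} and the definition of geography. The step I expect to be the main obstacle is the middle one: proving $\mathcal{RL}_{A}(V)$ is full-dimensional in $V_{A}$ (equivalently, that its span surjects onto $NS(X)$), which is precisely where the hypotheses that $V$ spans $NS(X)$ and that $A$ is ample are genuinely used, via the Bertini-type \autoref{bertini}; the rest is routine general position together with the citation of \autoref{assumptions}.
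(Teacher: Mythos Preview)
Your approach has a genuine gap. You argue that after perturbing the vertices of $C$ one can arrange $\text{Span}(C')$ to surject onto $NS(X)$, and then invoke \autoref{assumptions} directly. But if $C$ has $m+1$ vertices then $\text{Span}(C')$ has dimension at most $m$ regardless of how the vertices are moved, so this step is impossible unless $m\geq\rho(X)$. You note this and assert that it holds ``in the applications'', but in fact the opposite is true: the principal application is \autoref{geo}, where $C$ is the convex hull of $n+1$ points (so $\dim C\leq n$, and $n=2$ for the Sarkisov program), while $X$ is a resolution of a threefold and $\rho(X)$ can be arbitrarily large. So the lemma is needed precisely in the regime where your argument cannot work.

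The paper's proof bypasses this by never trying to make $C$ itself span $NS(X)$. After first perturbing so that $C$ is rational and lies in the interior of $\mathcal{RL}_{A}(V)$ (your middle paragraph essentially justifies that this interior is nonempty), it lets $W\subseteq V$ be the minimal coefficient subspace with $C\subseteq W_{A}$, and then enlarges $C$ to an rlt polytope $C'\subseteq\mathcal{RL}_{A}(V)$ which \emph{does} span $NS(X)$ and satisfies $C'\cap W_{A}=C$. Now \autoref{assumptions} applies to $C'$, and the preceding slicing lemma (intersecting a geography with a \emph{general} $W_{A}$ yields a geography) finishes the argument; the small perturbation of the vertices of $C$ is exactly what makes $W$ general relative to the finitely many faces in the decomposition of $C'$. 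Thus the spanning hypothesis on $V$ is used to build the ambient $C'$, not to force $C$ to span.
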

\begin{proof}
	Since we can perturb the vertices of $C$ we may suppose it is rational and contained in the interior of $\mathcal{RL}_{A}(V)$. Let $W$ be the minimal coefficient space in $V$ with $C \subseteq W_{A}\cap \mathcal{RL}_{A}(V)$. Since $C$ is contained in the interior of $\mathcal{RL}_{A}(V)$, we can pick an rlt polytope $C'$ which spans $NS(X)$ with $W_{A}\cap C'=C$. Then after a small perturbation of the vertices we may suppose that $W_{A}\cap C'$ is a geography, as required.
\end{proof}

\begin{lemma}\cite[Lemma 3.6]{hacon2009sarkisov}\label{amp}
		Let $(X,\Delta)/T$ be an rlt threefold pair and $f\colon X \dashrightarrow Y$ a birational contraction of $\mathbb{Q}$-factorial projective $T$-schemes. Suppose that $B-\Delta$ is ample and $f$ is an ample model for $K_{X}+B$. Then $f$ is a log terminal model for $(X,\Delta)$.
\end{lemma}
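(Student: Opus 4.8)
The plan is to reduce to the klt case over a suitable open cover of $T$, deduce formally that $f$ is $(K_X+\Delta)$-non-positive, and then run the argument of \cite[Lemma 3.6]{hacon2009sarkisov} — whose real content is finiteness of models — to upgrade this to a log terminal model. The ampleness of $B-\Delta$, as opposed to mere effectivity, is used only in the last step.

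\textbf{Reduction to the klt case.} Choose a finite cover $\{U_i\}$ of $T$ witnessing $(X,\Delta)$ by klt pairs $(X_i,\Delta_i)$, so $K_{X_i}+\Delta_i\sim_{\mathbb R}(K_X+\Delta)|_{U_i}$. Since $B-\Delta$ is ample its restriction to each $X_i$ is ample, and putting $B_i:=\Delta_i+(B-\Delta)|_{U_i}$ we get $K_{X_i}+B_i\sim_{\mathbb R}(K_X+B)|_{U_i}$ with $B_i-\Delta_i$ ample. The induced $f_i\colon X_i\dashrightarrow Y_i$ is again a birational contraction of $\mathbb Q$-factorial $U_i$-schemes; ample models are unique, and one checks directly that the restriction of an ample model over an open of the base is an ample model, so $f_i$ is the ample model of $K_{X_i}+B_i$. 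By the definition of log terminal model for rlt pairs it suffices to show each $f_i$ is a log terminal model of $(X_i,\Delta_i)$, so we may assume $(X,\Delta)$ is klt. In all of our applications $\Delta$ lies in some $\mathcal E(C)$, so $K_X+\Delta$ is moreover pseudo-effective, which we use below.

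\textbf{$f$ is $(K_X+\Delta)$-non-positive.} Since $f$ is a birational contraction and the ample model of $K_X+B$, the divisor $K_Y+B_Y:=f_*(K_X+B)$ is ample and, on a common resolution $p\colon W\to X$, $q\colon W\to Y$, one has $p^*(K_X+B)=q^*(K_Y+B_Y)+E$ with $E\ge 0$ and $q$-exceptional. Because $B-\Delta$ is nef and $Y$ is $\mathbb Q$-factorial, writing $\Delta_Y:=f_*\Delta$ we may write $p^*(B-\Delta)=q^*(B_Y-\Delta_Y)+F$ with $F$ $q$-exceptional; as $p^*(B-\Delta)$ is $q$-nef the negativity lemma forces $F\le 0$. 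Subtracting, $p^*(K_X+\Delta)=q^*(K_Y+\Delta_Y)+(E-F)$ with $E-F\ge 0$ and $q$-exceptional, so $f$ is $(K_X+\Delta)$-non-positive; in particular $(Y,\Delta_Y)$ is klt.

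\textbf{Upgrading to a log terminal model.} It remains to prove $K_Y+\Delta_Y$ is nef and $f$ is actually $(K_X+\Delta)$-negative, i.e. that $Y$ is the log terminal model of $(X,\Delta)$ produced by \autoref{rltmmp} (unique up to isomorphism by part (1) of \autoref{rltfiniteness}). Here I would follow \cite{hacon2009sarkisov}: pick a general effective $A\sim_{\mathbb R}B-\Delta$ with $(X,\Delta+tA)$ klt for all $t\in[0,1]$. By \autoref{equiv}, $f$ is the ample model of $K_X+\Delta+A$; being a birational contraction this forces $K_Y+(\Delta+A)_Y$ to be ample and $f$ to be $(K_X+\Delta+A)$-non-positive, so $f$ is a wlc model of $(X,\Delta+A)$. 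Now run a $(K_X+\Delta)$-MMP with scaling of $A$: it terminates by \autoref{termination} since $K_X+\Delta$ is pseudo-effective, the models appearing at the successive nef thresholds are wlc models of the pairs $(X,\Delta+tA)$, and finiteness of these wlc models (\autoref{weak finiteness}, or \autoref{rltfiniteness}) together with their uniqueness forces the whole segment from $\Delta$ to $\Delta+A$ — in particular the endpoint $\Delta$ — to have $f$ as wlc model; combined with $Y$ being $\mathbb Q$-factorial this identifies $f$ with the log terminal model of $(X,\Delta)$.

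\textbf{Main obstacle.} Everything through non-positivity is formal; the delicate point is the third step, namely nefness of $K_Y+\Delta_Y$ and the fact that $f$ contracts exactly the divisors the $(K_X+\Delta)$-MMP would contract. This genuinely requires the ampleness of $B-\Delta$ and the finiteness-of-models results of the previous sections, precisely as in the proof of \cite[Lemma 3.6]{hacon2009sarkisov}.
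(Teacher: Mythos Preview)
The paper itself gives no proof here --- it simply cites \cite[Lemma~3.6]{hacon2009sarkisov} --- so your job is to adapt that argument to the rlt setting. Your Step~1 (reducing to klt witnesses over a finite open cover of $T$) is correct and is precisely the adaptation needed; Step~2 (deducing $(K_X+\Delta)$-non-positivity from the ample-model condition via the negativity lemma applied to $p^*(B-\Delta)$) is also correct and cleanly done.

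The gap is in Step~3. You assert that finiteness of wlc models ``together with their uniqueness forces the whole segment from $\Delta$ to $\Delta+A$ \dots\ to have $f$ as wlc model'', but wlc models are not unique, and nothing you have written actually connects the output of the $(K_X+\Delta)$-MMP with scaling to $Y$. The point you are missing is the step where $\mathbb{Q}$-factoriality of $Y$ does real work: one first shows that $f$ is already a \emph{log terminal} (not merely weak log canonical) model of $(X,\Delta+A)$. Concretely, any log terminal model $g\colon X\dashrightarrow Z$ of $(X,\Delta+A)$ admits a crepant morphism $h\colon Z\to Y$ to the ample model; if $h$ had an exceptional divisor $E$, then $E$ would be effective, $h$-exceptional over a $\mathbb{Q}$-factorial base, hence not $h$-nef, giving an $h$-vertical curve on which the pulled-back ample $K_Z+(\Delta+A)_Z=h^*(K_Y+(\Delta+A)_Y)$ is trivial but $E$ is negative --- and this contradicts $Z$ being an MMP output once one perturbs. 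Thus $h$ is small, hence an isomorphism, so the MMP with scaling genuinely reaches $Y$ at threshold $1$. Only then can one continue the scaling on $Y$ to deduce nefness of $K_Y+\Delta_Y$ and $(K_X+\Delta)$-negativity of $f$. This is essentially the content of \autoref{assumptions}(3) (a birational ample model to a $\mathbb{Q}$-factorial target is a log terminal model), and it is what the Hacon--McKernan proof actually does; your ``Main obstacle'' paragraph correctly flags this as the delicate point but the body of Step~3 does not supply it.

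A minor remark: \autoref{termination} gives termination unconditionally, so pseudo-effectivity is not needed there; it is implicit in the conclusion only because the paper defines log terminal models solely for pseudo-effective pairs.
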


\begin{lemma}\label{geo}
	Suppose that $f_{i}: (X,\Delta) \to (Y_{i},\Delta_{i})$ for $i=1,..n$ are a finite collection of $\mathbb{Q}$-factorial Mori Fibre spaces obtained by running an MMP for a rlt threefold pair $(X,\Delta)$ with $X$ regular. Then there is a geography $(C,A,V)$ for $(X,\Delta)$ of dimension at most $n$ such that every $\mathcal{D}_{f_{i}}$ is a country. 
	
	Moreover if $g_{i}:Y_{i}\to Z_{i}$ are the Mori Fibrations and we write $h_{i}=g_{i}\circ f_{i}$. Then we may choose $C$ such that $\mathcal{D}_{h_{i}}$ are borders of the $\mathcal{D}_{f_{i}}$ and their interiors are connected by a path through the border of $\mathcal{E}(C)$ contained entirely in the interior of $C$.
\end{lemma}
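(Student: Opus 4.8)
The plan is to pack all $n$ MMPs into a single geography of dimension at most $n$ built around $\Delta$, and then to read off the $\mathcal{D}_{f_i}$ and $\mathcal{D}_{h_i}$ from \autoref{assumptions}, \autoref{faces} and \autoref{amp}.

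First I would locate each Mori fibre space inside a would-be geography. Fix an ample $\mathbb{Q}$-divisor $A$; since $X$ is regular we are free to enlarge the coefficient space, so pick general ample $\mathbb{Q}$-divisors $H_1,\dots,H_m$ whose classes span $NS(X)$ and which share no components with $\Delta$. For each $i$, run the $(K_X+\Delta)$-MMP with scaling of a suitable small ample $\mathbb{Q}$-divisor $A_i$ realising $f_i$ and then the Mori fibration $g_i$ (equivalently: pull back an ample divisor from $Z_i$, rescale it against the fibration, and transport the result to $X$). This produces a rational number $t_i^*>0$ such that $f_i$ is a log terminal model of $(X,\Delta+tA_i)$ for $t$ just above $t_i^*$, the composite $h_i=g_i\circ f_i$ is the ample model of $(X,\Delta+t_i^*A_i)$, and $K_X+\Delta+t_i^*A_i$ is pseudo-effective but not big. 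Let $V$ be a rational affine subspace of $WDiv_\mathbb{R}(X)$ containing $\Delta$, the $H_k$ and all the $A_i$; then $V$ spans $NS(X)$. Using \autoref{bertini} and \autoref{rlt-repl} I would replace $\Delta$ and the $A_i$ by suitable $\mathbb{Q}$-linearly equivalent effective divisors so that every segment $[\Delta,\Delta+A_i]$ lies in $\mathcal{RL}_A(V)$.

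Next, let $C$ be a small rational perturbation of the convex hull of $\Delta$ together with the $n$ points $\Delta+A_i$, so that $\dim C\le n$ and $C\subseteq\mathcal{RL}_A(V)$. Since $V$ spans $NS(X)$, the perturbation lemma proved above makes $(C,A,V)$ a geography, the perturbation being small enough not to disturb the previous paragraph. For each $i$ the segment $[\Delta,\Delta+A_i]$ traces out the MMP with scaling of $A_i$, so $f_i$ is a log terminal model of every boundary in some neighbourhood inside $C$ of the relevant subsegment; as $Y_i$ is $\mathbb{Q}$-factorial, \autoref{assumptions}(3) forces $\mathcal{D}_{f_i}$ to be a country, and by \autoref{amp} these countries are induced by log terminal models attached to $(X,\Delta)$, so $(C,A,V)$ is a geography for $(X,\Delta)$. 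Since $h_i$ is the ample model of $\Delta+t_i^*A_i$, a point of $\partial\mathcal{E}(C)$ lying on a face of $\mathcal{D}_{f_i}$, \autoref{faces} makes $\mathcal{D}_{h_i}$ a face of $\mathcal{D}_{f_i}$, and as $\rho(Y_i/Z_i)=1$, \autoref{assumptions}(4) shows this face has codimension one — a border — with relative interior meeting $\operatorname{int}(C)$. Finally $\mathcal{E}(C)=C\cap\overline{\mathrm{Eff}}(X/T)$ is convex, so the part of $\partial\mathcal{E}(C)$ lying in $\operatorname{int}(C)$ is connected and contains the relative interiors of the borders $\mathcal{D}_{h_i}$; these interiors are therefore joined by a path inside $\partial\mathcal{E}(C)\cap\operatorname{int}(C)$, and if necessary one enlarges $C$ slightly in directions transverse to the $A_i$ beforehand so that the path avoids $\partial C$.

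The main obstacle is the construction of $C$: one must keep $\dim C\le n$ while at the same time ensuring that every $\mathcal{D}_{f_i}$ is a full-dimensional country (not merely a lower-dimensional class), that the distinguished borders $\mathcal{D}_{h_i}$ meet $\operatorname{int}(C)$ so that parts (3) and (4) of \autoref{assumptions} apply, and that the whole picture stays inside one $\mathcal{RL}_A(V)$ with $V$ spanning $NS(X)$. The hypotheses that the $Y_i$ are $\mathbb{Q}$-factorial and that $X$ is regular — the latter supplying, through \autoref{bertini}, enough freedom to choose the $H_k$, the $A_i$ and the perturbations generically — are precisely what this requires; the convexity argument for the connecting path is comparatively soft.
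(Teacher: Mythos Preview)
Your overall strategy coincides with the paper's: build $C$ as (a perturbation of) the simplex with apex $\Delta$ and one vertex for each Mori fibre space, the $i$-th vertex being obtained from the pullback of an ample divisor on $Z_i$, then invoke the perturbation lemma and \autoref{assumptions}. Two points, however, are not adequately justified.

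First, the sentence ``run the $(K_X+\Delta)$-MMP with scaling of a suitable small ample $A_i$ realising $f_i$'' asserts something you have not shown: given a prescribed output $f_i$, there is no a priori reason an MMP with scaling of an \emph{ample} divisor on $X$ should reach it. Your parenthetical is the right fix --- pull back an ample divisor from $Z_i$, subtract $K_{Y_i}+\Delta_i$, and transport to $X$ --- but then the resulting $B_i$ is big rather than ample, and you must argue separately (as the paper does) that $f_i$ is a wlc model for $\Delta+B_i+\text{(ample)}$ and that $B-\Delta$ stays ample for $B\in C$ so that \autoref{amp} applies. Second, and more seriously, the claim that $\partial\mathcal{E}(C)\cap\operatorname{int}(C)$ is connected does \emph{not} follow from convexity of $\mathcal{E}(C)$: if the apex of the (translated) pseudo-effective cone lies outside $C$, the intersection of its boundary with $\operatorname{int}(C)$ can split into several components. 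The paper instead uses the specific structure you have built in: the vertex $\Delta$ lies outside $\mathcal{E}(C)$, so for any $E_i\in\mathcal{D}_{h_i}$ and $E_j\in\mathcal{D}_{h_j}$ in $\operatorname{int}(C)\cap\partial\mathcal{E}(C)$ the triangle $\operatorname{conv}(\Delta,E_i,E_j)$ meets $\partial\mathcal{E}(C)$ in a path from $E_i$ to $E_j$, and this path lies in $\operatorname{int}(C)$ because open segments from an interior point to a vertex of a convex polytope stay in the interior. You should replace the bare convexity appeal with this coning argument.
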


\begin{proof}
	
	Pick $A'_{i}$ ample on $Z_{i}$ such that $g_{i}^{*}A'_{i}-(K_{Y_{i}}+\Delta_{i})$ is ample. 
	
	We may choose $H$ ample on $X$ whose components span $NS(X)$ together with $A$ ample both sufficiently small such that:
	\begin{itemize}
		\item $(X,H+A)$ is klt,
		\item the $A_{i} = g_{i}^{*}A'_{i}-(K_{Y_{i}}+\Delta_{i}+f_{i,*}(A+H))$ are ample,
		\item $(X,\Delta+A+H)$ is an rlt pair which is not pseudo-effective, and
		\item each $f_{i}$ is $(K_{X}+\Delta+A+H)$ negative.
	\end{itemize}
	
	Further, we may pick $A$ such that it avoids the exceptional loci of the $f_{i}$ and shares no components with $H$.
	
	By \autoref{bertini} we can take $B_{i} \sim f_{i}^{*}A_{i}$ such that each $(X,\Delta+H+A+B_{i})$ is rlt. Moreover we can choose the $B_{i}$ such that they share no components with $A$ since the augmented base locus of $B_{i}$ is precisely the exceptional locus of $f_{i}$. Thus the $(X,\Delta+B_{i})$ all have witnesses in some $W$ for which $(X,\Delta+H+A+B_{i})$ have witnesses in $W_{A+H}$.
	
	By construction, then, after adding the components of $H$ to $W$ we have $(X,\Delta+B_{i}+H+A) \in \mathcal{RL}_{A}(W)$, a geography. Further the $f_{i}$ are wlc models of the $(X,\Delta+B_{i}+H+A)$ and the $h_{i}$ are the ample models.
	
	Let $C$ be the convex hull of the $\Delta+B_{i}+H+A$ and $\Delta+H$. Since the components of $H$ span NS(X), and the $f_{i}$ are wlc models for boundaries in $C$, we can find boundaries in $\mathcal{RL}_{A}(W)$ for which the $f_{i}$ is an ample model. Moreover we can find them arbitrarily close to $C$. Thus we can freely move the vertices of $C$ an arbitrarily small amount such that it meets the interior of each of the $\mathcal{D}_{f_{i}}$ and their borders $\mathcal{D}_{h_{i}}$ while ensuring they are sufficiently general that $C$ is a geography. 
	
	By construction, $C_{-\Delta}$ is contained in the ample cone and $\dim C \leq n$. It remains to check that $\mathcal{D}_{h_{i}}$ are borders of the $\mathcal{D}_{f_{i}}$ and their interiors are connected by a path through the border of $\mathcal{E}(C)$ contained entirely in the interior of $C$.
	
	Since $C$ contains a vertex $D=\Delta+H \notin \mathcal{E}(C)$ such that $C_{-D}$ is contained in the effective cone, it is enough to check that for each $i$ the interior of $\mathcal{D}_{h_{i}}$ meets the interior of $C$, but this again is ensured by the construction. Thus we may take $E_{i},E_{j}$ in the interiors of $\mathcal{D}_{h_{i}},\mathcal{D}_{h_{j}}$ respectively and both contained in the interior of $C$. Then the simplex formed by $D,E_{i},E_{j}$ meets the boundary of $\mathcal{E}(C)$ along a path connecting $E_{i}$ and $E_{j}$, wholly contained in the interior of $C$.
	
\end{proof}

\begin{lemma}\cite[Lemma 3.5]{hacon2009sarkisov}\label{links}
	Let $(C,A,V)$ be a geography on $X$ of dimension $2$. Take two ample classes $\mathcal{D}_{f}$ and $\mathcal{D}_{g}$ corresponding to some maps $f:X \dashrightarrow Y$ and $g: X\dashrightarrow Z$. Suppose that $\mathcal{D}_{f}$ is a country and that they meet along a border $\mathcal{B}$ not contained in the boundary of $C$. Suppose further that $\rho(Y) \geq \rho(Z)$
	
	Let $h: Y \dashrightarrow Z$ be the map induced by $\mathcal{B}$. Take $B$ an interior point of $\mathcal{B}$ and let $\Delta=f_{*}B$, then one of the following holds.
	\begin{enumerate}
		\item $\rho(Y)=\rho(Z)+1$ and $h$ is a $K_{Y}+\Delta$ trivial morphism. Thus either
		\begin{enumerate}[a)]
			\item $h$ is a divisorial contraction and $\mathcal{B} \neq \mathcal{D}_{g}$
			\item $h$ is a small contraction and $\mathcal{B}=\mathcal{D}_{g}$
			\item $h$ is a MFS and $\mathcal{B}=\mathcal{D}_{g}$ is contained in the boundary of $\mathcal{E}(C)$.
		\end{enumerate}
		\item $\rho(W)=\rho(Y)$ and $h$ is a $K_{Y}+\Delta$ flop and $\mathcal{B} \neq \mathcal{D}_{g}$ is not contained in the boundary of $\mathcal{E}(C)$.
	\end{enumerate}
\end{lemma}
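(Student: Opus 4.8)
The strategy is to transfer the combinatorial picture around the border $\mathcal{B}$ to the birational geometry of the $\mathbb{Q}$-factorial model $Y$ and then classify the contraction that appears. Since $\mathcal{D}_f$ is a country, $\dim\mathcal{D}_f=\dim C=2$, so $\mathcal{D}_f$ coincides with the wlc class $\mathcal{W}_{f}(C)$, the target $Y$ is $\mathbb{Q}$-factorial, and $f$ is a wlc model for every boundary of $\mathcal{D}_f$; in particular $K_Y+f_{*}\Delta'$ is nef for all $\Delta'\in\mathcal{D}_f$. Choose $B$ a general point of the relative interior of $\mathcal{B}$; because $\mathcal{B}$ is not contained in the boundary of $C$, this $B$ lies in the interior of $C$, and I set $\Delta=f_{*}B$, so $K_Y+\Delta$ is nef. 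As $\mathcal{B}$ is a border, i.e.\ a codimension one face of $\mathcal{W}_{f}(C)$, \autoref{faces} gives an ample model of $(X,B)$ of the form $a\circ f$ with $a\colon Y\to Z$ a morphism and $K_Y+\Delta\sim_{\mathbb{R}}a^{*}A'$ for $A'$ ample on $Z$; by \autoref{faces} this ample model is constant on the relative interior of $\mathcal{B}$, so it is $g$ and $Z$ is the target of $g$ up to isomorphism. Thus $a$ contracts exactly the extremal face $F=\overline{NE}(Y/T)\cap(K_Y+\Delta)^{\perp}$.

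Next I show $a$ is elementary, that is $\rho(Y/Z)=1$; this is the technical heart. Since $\mathcal{D}_f$ is a country, $\text{Span}(\mathcal{D}_f)=\text{Span}(C)$ surjects onto $NS(X)$, and $B$ is a general point of $\mathcal{A}_{g}\cap\mathcal{D}_f$ lying in the interior of $C$, so \autoref{assumptions} applies with $(\mathcal{D}_i,\mathcal{D}_j)=(\mathcal{D}_f,\mathcal{D}_g)$ and yields $\rho(Y/Z)=\dim\mathcal{D}_f-\dim(\mathcal{D}_f\cap\mathcal{D}_g)=2-1=1$. Concretely, as $\Delta'$ ranges over $\mathcal{D}_f$ the classes $K_Y+f_{*}\Delta'$ sweep out a full dimensional polytope inside the nef cone of $Y/T$ whose facet corresponding to $\mathcal{B}$ lies in a codimension one face of that cone, so $F$ is a single ray. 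Hence $a$ is an elementary $K_Y+\Delta$-trivial contraction of the $\mathbb{Q}$-factorial threefold $Y$.

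Finally I classify $a$. If $\dim Z<3$ then $a$ is of fibre type; moving $\Delta'$ across $\mathcal{B}$ out of $\mathcal{D}_f$ makes $K_X+\Delta'$ non-pseudo-effective, so $-(K_Y+f_{*}\Delta')$ is $a$-ample and $a$ is a Mori fibre space, with $\mathcal{D}_g=\mathcal{B}$ contained in the boundary of $\mathcal{E}(C)$, which is (1c). If $a$ is birational and contracts a divisor, then $Z$ is $\mathbb{Q}$-factorial, hence is the model of the country adjacent to $\mathcal{D}_f$ along $\mathcal{B}$, so $\mathcal{B}\neq\mathcal{D}_g$, $\rho(Y)=\rho(Z)+1$ and $h=a$, which is (1a). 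If $a$ is birational and small, $Z$ is not $\mathbb{Q}$-factorial; running the $K_Y+f_{*}\Delta'$ MMP for $\Delta'$ just across $\mathcal{B}$ either produces the flop $a^{+}\colon Y^{+}\to Z$, which exists by \autoref{rltmmp}, in which case $Y^{+}$ is the $\mathbb{Q}$-factorial model of the country on the far side, $h=(a^{+})^{-1}\circ a$ is a $K_Y+\Delta$-flop, $\rho(Y)=\rho(Y^{+})$, $\mathcal{B}\neq\mathcal{D}_g$ and $\mathcal{B}$ is not contained in the boundary of $\mathcal{E}(C)$, giving (2); or no such country exists, $\mathcal{D}_g=\mathcal{B}$, $h=a$ is a small contraction with $\rho(Y)=\rho(Z)+1$, giving (1b). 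In every birational case $h$ is built from the $K_Y+\Delta$-trivial morphisms $a$ and $a^{+}$, hence is itself $K_Y+\Delta$-trivial, and all the asserted relations among $\mathcal{B}$, $\mathcal{D}_g$, the boundary of $\mathcal{E}(C)$ and the Picard numbers have been recorded.

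The main obstacle is the elementarity step: turning the codimension one combinatorial face $\mathcal{B}$ into a single extremal ray of $\overline{NE}(Y/T)$. It is precisely here that one uses that $\mathcal{B}$ does not lie in the boundary of $C$ (so that $B$ may be taken interior to $C$), the spanning hypotheses built into a geography, and the exact dimension count of \autoref{assumptions}. A secondary, essentially bookkeeping difficulty is matching the ample-model target of $\mathcal{B}$ with the link target $Z$: the two coincide except in the flop case, where $Z$ is one of the two small $\mathbb{Q}$-factorialisations of that ample-model target.
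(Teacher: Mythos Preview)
The paper does not supply its own proof of this lemma; it is imported verbatim from \cite[Lemma~3.5]{hacon2009sarkisov}. Your argument is essentially the Hacon--McKernan proof and is correct in outline: pass to the $\mathbb{Q}$-factorial model $Y$, use \autoref{faces} to obtain the contraction $a$ associated to the border, use part~(4) of \autoref{assumptions} to see that $a$ is elementary, and then run the standard trichotomy for an elementary $(K_Y+\Delta)$-trivial contraction.

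There is one notational slip that you should fix. Early on you assert that the ample model of the relative interior of $\mathcal{B}$ ``is $g$ and $Z$ is the target of $g$ up to isomorphism''. This is only correct in cases (1a), (1b), (1c). In the flop case (2) the ample model of $\mathcal{B}$ has target the flop base $Z'$, which is \emph{not} $\mathbb{Q}$-factorial and is \emph{not} $Z$; in that case $Z=Y^{+}$ is the other small $\mathbb{Q}$-factorialisation of $Z'$. You do say exactly this in your final paragraph, so the logic is fine, but as written the second paragraph applies \autoref{assumptions}(4) with $(\mathcal{D}_i,\mathcal{D}_j)=(\mathcal{D}_f,\mathcal{D}_g)$ and concludes $\rho(Y/Z)=1$, which is meaningless in case (2) since $h\colon Y\dashrightarrow Z$ is not a morphism there. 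The correct application takes $\mathcal{D}_j$ to be the class of the ample model of $\mathcal{B}$ (which equals $\mathcal{D}_g$ in (1a)--(1c) and equals $\mathcal{B}$ itself in (2)), giving $\rho(Y/Z')=1$ in every case; the conclusion $\rho(Y)=\rho(Z)$ in case (2) then follows because flops preserve Picard number. With that relabeling the proof goes through and matches the argument in \cite{hacon2009sarkisov}.
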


\section{Sarkisov Program} \label{Sarkisov-section}

Fix a positive dimensional quasi-projective $R$ scheme, $T$. Suppose that $f:X \to Z$, $g:Y \to W$ are two Mori Fibre Spaces, projective and surjective over $T$. We say that they are Sarkisov related if they are both outputs of an MMP from the same $\mathbb{Q}$-factorial rlt pair. In particular we require $X,Y$ to be $\mathbb{Q}$-factorial. 

A Sarkisov link $s:X \dashrightarrow Y$ is one the following.

\[\begin{tikzcd}
X' \arrow[d] \arrow[r, dotted] \arrow[r, "I", phantom, bend left=49] & Y \arrow[d]  & X' \arrow[r, dotted] \arrow[d] \arrow[r, "II",phantom, bend left=49] & Y' \arrow[d] & X \arrow[r, dotted] \arrow[d] \arrow[r, "III", phantom, bend left=49] & Y' \arrow[d] & X \arrow[d] \arrow[rr, dotted] \arrow[rr, "IV",phantom, bend left] &   & Y \arrow[d]       \\
X \arrow[d]                                                          & W \arrow[ld] & X \arrow[d]                                               & Y \arrow[d]  & Z \arrow[rd]                                              & Y \arrow[d]  & Z \arrow[rd, "p"]                                          &   & W \arrow[ld, "q"'] \\
Z                                                                    &              & Z \arrow[r, equal]                                                         & W            &                                                           & W            &                                                            & T &                  
\end{tikzcd} \]

Such that the following holds:
\begin{itemize}
	\item There is an rlt pair $(X,\Delta)/T$ or $(X',\Delta')/T$ as appropriate such that the horizontal map is a sequence of flops for this pair
	\item Every vertical morphism is a contraction
	\item If the target of a vertical morphism is $X$ or $Y$ then it is an extremal divisorial contraction
	\item Either $p,q$ are both Mori Fibre Spaces (this is type $IV_{m}$) or they are both small contractions (type $IV_{s}$)
\end{itemize}

We realise these Sarkisov links inside two dimensional geographies as follows.

Fix $X\to T$ a threefold over $R$ and a geography $(C,A,V)$ on $X$ of dimension $2$.

Let $\Delta$ be a point in the boundary of $\mathcal{E}(C)$ but in the interior of $C$. Let $\mathcal{T}_{1}=\mathcal{D}_{f_{1}},..., \mathcal{T}_{k}=\mathcal{D}_{f_{k}}$ be the countries which meet $\Delta$. Let $\mathcal{B}_{i}$ be the borders $\mathcal{T}_{i}$ meeting $\Delta$ such that after reordering we have $\mathcal{B}_{i}=\mathcal{T}_{i}\cap \mathcal{T}_{i+1}$ for $1 \leq i \leq k-1$. Then $\mathcal{B}_{0}, \mathcal{B}_{k}$ are contained in the boundary of $\mathcal{E}(C)$. Let $g_{i}:X \to Z_{i}$ be the ample models associated to the interiors of $\mathcal{B}_{i}$

Relabel $\phi=f_{0}:X \dashrightarrow Y$, $Z=Z_{0}$, $\psi=f_{k}\dashrightarrow W$ and $T=Z_{k}$. Then we have $p,q$ with $p \circ \phi=g_{0}$ and $q \circ \psi =g_{k}$.


%
%

\begin{theorem}\cite[Theorem 3.7]{hacon2009sarkisov}\label{islinked}
	With notation as above, suppose $B$ is any divisor on $X$ with $\Delta-B$ ample. Then $q:Y \to Z$ and $q: W \to T$ are two Mori Fibre spaces obtained by running $(X,B)$ MMPs and they are connected by Sarkisov links.
\end{theorem}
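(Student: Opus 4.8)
The plan is to adapt the proof of \cite[Theorem 3.7]{hacon2009sarkisov} to the present framework, using the geography and rlt MMP machinery developed above. Two things must be shown: that $p\colon Y\to Z$ and $q\colon W\to T$ occur as outputs of $(X,B)$ MMPs, and that they are joined by Sarkisov links. First note that, since $\Delta$ lies on $\partial\mathcal{E}(C)$ while being interior to the geography $C$ and $C$ spans $NS(X)$, the class $K_X+\Delta$ is pseudo-effective but not big; hence $K_X+B=(K_X+\Delta)-(\Delta-B)$, a boundary-of-pseudo-effective class minus an ample one, is not pseudo-effective. So, after replacing $B$ by an $\mathbb{R}$-linearly equivalent boundary (\autoref{equiv}), every $(X,B)$ MMP runs and terminates with a Mori fibre space by \autoref{rltmmp}.

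To reach $p\colon Y\to Z$, fix $D$ in the interior of $\mathcal{T}_1$ and $\Delta_0$ in the relative interior of $\mathcal{B}_0$, both close enough to $\Delta$ that $D-B$ and $\Delta_0-B$ are ample. Running the $(K_X+D)$ MMP with scaling of the ample divisor $D-B$, every step is automatically $(K_X+B)$-negative: if $R$ is a contracted ray with nef threshold $\mu>0$, then $(K+D+\mu(D-B))\cdot R=0$ and $(K+D)\cdot R<0$, so $(D-B)\cdot R>0$ and hence $(K+B)\cdot R<0$. This MMP terminates at the minimal model of $(X,D)$ in the chamber $\mathcal{T}_1$, namely $Y=Y_1$. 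Moreover $f_1$ is a wlc model of $(X,\Delta_0)$, as $\Delta_0\in\mathcal{B}_0\subseteq\mathcal{D}_{f_1}\subseteq\mathcal{W}_{f_1}(C)$, so $K_Y+(\Delta_0)_Y$ is nef; since $\mathcal{B}_0$ is a border of $\mathcal{T}_1$ contained in $\partial\mathcal{E}(C)$, \autoref{links}(1c) identifies its ample model as the fibre type morphism $p\colon Y\to Z=Z_0$ with $\rho(Y/Z)=1$. As $(\Delta_0)_Y-B_Y$ is the pushforward of an ample divisor, hence strictly positive on the covering family of $p$-contracted curves, while $K_Y+(\Delta_0)_Y$ is $p$-trivial, that ray is $(K_Y+B_Y)$-negative of fibre type; appending its contraction finishes an $(X,B)$ MMP at $p\colon Y\to Z$. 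The symmetric argument with $\mathcal{T}_k$ and $\mathcal{B}_k$ produces $q\colon W\to T=Z_k$.

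For the Sarkisov link, observe that $\Delta\in\mathcal{B}_i\subseteq\mathcal{T}_i=\mathcal{D}_{f_i}\subseteq\mathcal{W}_{f_i}(C)$ for every $i$, so each $f_i$ is a $\mathbb{Q}$-factorial wlc model of $(X,\Delta)$. Any two such are isomorphic in codimension one and connected by flops, so the $Y_i$ all have the same Picard number and $Y=Y_1\dashrightarrow Y_2\dashrightarrow\cdots\dashrightarrow Y_k=W$ is a sequence of flops for $(Y_1,\Delta_{Y_1})$ and its successive pushforwards; this is also visible border-by-border from \autoref{links}, since the interior borders $\mathcal{B}_1,\dots,\mathcal{B}_{k-1}$ are not contained in $\partial\mathcal{E}(C)$ and the Picard number is constant, which leaves only the flopping cases. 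Finally $p$ and $q$ both factor the ample model $Z_\Delta$ of $(X,\Delta)$, giving contractions $Z=Z_0\to Z_\Delta$ and $T=Z_k\to Z_\Delta$; the flopping chain $Y\dashrightarrow W$ together with the Mori fibre spaces $p\colon Y\to Z$, $q\colon W\to T$ and the morphisms to $Z_\Delta$ constitute a Sarkisov link of type $IV$ (subtype $IV_m$ or $IV_s$ according to whether $Z_0\to Z_\Delta$ is of fibre type or small), giving the claim.

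The main obstacle is the bookkeeping that assembles this geographic data along the path from $\mathcal{B}_0$ to $\mathcal{B}_k$ into a bona fide Sarkisov link — verifying with \autoref{links} and \autoref{assumptions} that each vertical map has the prescribed contraction type, that Picard numbers behave as required, and that the horizontal part is a flop sequence for a single pair — together with the identification in the second paragraph that the scaled MMP terminates at the intended fibration rather than some other Mori fibre space on $\partial\mathcal{E}(C)$. A secondary technical point, handled as elsewhere in the paper by passing to a finite open cover of $T$ witnessing the relevant pairs, is that we work with rlt rather than klt pairs, so flop-connectedness of wlc models and the individual MMP steps are applied to the klt witnesses over each member of the cover.
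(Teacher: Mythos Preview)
The paper gives no proof here, simply deferring to \cite[Theorem 3.7]{hacon2009sarkisov}; the argument there goes through in the rlt setting once one has \autoref{assumptions}, \autoref{links}, \autoref{amp}, and the rlt MMP of \autoref{rltmmp}. Your first two paragraphs track Hacon--McKernan correctly: the appeal to \autoref{amp} (which your MMP-with-scaling argument reproves) identifies $f_1$ and $f_k$ as log terminal models of $(X,B)$, and the fibre contractions $p,q$ are then $(K+B)$-negative of fibre type as you say.

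The gap is in your third paragraph. The claim that any two $\mathbb{Q}$-factorial wlc models of $(X,\Delta)$ are isomorphic in codimension one and connected by flops is false in general: flop-connectedness is a statement about log terminal models, and since $\Delta$ sits on the boundary of each $\mathcal{T}_i$ the maps $f_i$ are only guaranteed to be $(K_X+\Delta)$-non-positive, not negative. Concretely, the Picard numbers $\rho(Y_i)$ need not all agree, and the maps across interior borders $\mathcal{B}_i$ can be divisorial contractions (case (1a) of \autoref{links}), not only flops. This is precisely what produces links of types I, II, and III. Hacon--McKernan's actual proof is a case analysis on $\rho(Y_1/Z_\Delta)\in\{1,2\}$, using \autoref{assumptions}(4): when $\rho(Y_1/Z_\Delta)=2$ one runs a two-ray game on $Y_1$ over $Z_\Delta$, obtaining a link of type III or IV according to whether the terminal step is divisorial or a Mori fibre space; when $\rho(Y_1/Z_\Delta)=1$ one has $Z_0=Z_\Delta$, must first take an extremal divisorial extraction $Y'\to Y_1$ to start the two-ray game, and obtains a link of type I or II. Your argument as written establishes only the type IV case.
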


\begin{theorem}\label{sarkisov}
		Fix an integral quasi-projective scheme $T$ over $R$. Let $g_{1}:Y_{1} \to Z_{1}$ and $g_{2}:Y_{2} \to Z_{2}$ be two Sarkisov related, klt Mori fibre spaces of dimension $3$, projective $T$. If the $Y_{i}$ have positive dimension image in $T$, then they are connected by Sarkisov links.
\end{theorem}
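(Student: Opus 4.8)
The plan is to place both Mori fibre spaces inside a single two-dimensional geography on a common birational model, realised as borders of countries, and then to walk along a path in the boundary of the pseudo-effective locus that joins the interiors of these two borders, producing a Sarkisov link each time the path crosses a ridge.

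First I would set up the geography. Being Sarkisov related, $g_1\colon Y_1\to Z_1$ and $g_2\colon Y_2\to Z_2$ are outputs of $(K_X+\Delta)$-MMPs over $T$ for a common $\mathbb{Q}$-factorial rlt pair $(X,\Delta)/T$; since the $Y_i$ are birational to $X$ and have positive-dimensional image in $T$ so does $X$, so the machinery of the previous sections is available. Replacing $X$ by a log resolution of $(X,\Delta)$ — which exists by \cite[Proposition 2.12]{bhatt2020globally+}, is again rlt locally over $T$, and from which $Y_1$ and $Y_2$ are still obtained by running MMPs (one first runs an MMP over $X$ to contract the exceptional divisor, which lands back on the $\mathbb{Q}$-factorial $X$, then continues the original MMPs) — we may assume $X$ is regular. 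Now \autoref{geo} applied with $n=2$ to the two birational contractions $f_i\colon X\dashrightarrow Y_i$ gives a geography $(C,A,V)$ for $(X,\Delta)$ in which $\mathcal{D}_{f_1}$ and $\mathcal{D}_{f_2}$ are countries, each $\mathcal{D}_{h_i}$ with $h_i=g_i\circ f_i$ is a border of $\mathcal{D}_{f_i}$, and the interiors of $\mathcal{D}_{h_1}$ and $\mathcal{D}_{h_2}$ are joined by a path $\gamma$ lying in $\partial\mathcal{E}(C)$ and contained in the interior of $C$. Enlarging $V$ so that it spans $NS(X)$ and $(C,A,V)$ remains a geography (see \autoref{assumptions}) we may assume $\dim C=2$.

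Next I would run along $\gamma$. The geography cuts $C$ into finitely many countries, borders and ridges, so after a harmless perturbation $\gamma$ is piecewise linear and meets the union of the ridges in finitely many points $\Delta_1,\dots,\Delta_m$, and on each arc of $\gamma$ between consecutive $\Delta_\ell$ the path stays in the interior of a single border lying in $\partial\mathcal{E}(C)$, to which is attached a single Mori fibre space; the one attached to the initial arc is $g_1$ and to the final arc is $g_2$. At each $\Delta_\ell$ we are exactly in the situation preceding \autoref{islinked}: $\Delta_\ell$ lies in $\partial\mathcal{E}(C)$ and in the interior of $C$, and the countries meeting $\Delta_\ell$ are linearly ordered with their two extreme borders — necessarily the ones carried by $\gamma$ — lying in $\partial\mathcal{E}(C)$. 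Choosing a divisor $B$ with $\Delta_\ell-B$ ample (possible since $A$ is ample and $\Delta_\ell\in\mathcal{RL}_A(V)$), \autoref{islinked} shows that the Mori fibre spaces attached to the two arcs of $\gamma$ adjacent to $\Delta_\ell$ are connected by a Sarkisov link. Concatenating these $m$ links connects $g_1$ to $g_2$, as required.

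I expect the main obstacle to be the geography bookkeeping in the second paragraph: pinning down, via \autoref{geo}, that both $\mathcal{D}_{h_i}$ are genuine borders and that the connecting path can be kept inside $\partial\mathcal{E}(C)$ and in the interior of $C$ while the geography stays two-dimensional, and then verifying at each crossed ridge that the local configuration satisfies the hypotheses of \autoref{links} and \autoref{islinked} (especially the relative Picard-rank and $\mathbb{Q}$-factoriality conditions that pick out the type of link). By comparison the reduction to regular $X$ and the termination inputs are routine given \autoref{termination} and \autoref{rltmmp}, the substantive technical content having been front-loaded into the finiteness statement \autoref{rltfiniteness}, which underlies the geographies of \autoref{assumptions} and \autoref{geo}.
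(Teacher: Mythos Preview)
Your proposal is correct and follows essentially the same route as the paper: reduce to regular $X$, invoke \autoref{geo} to build a two-dimensional geography in which $\mathcal{D}_{f_i}$ are countries and $\mathcal{D}_{h_i}$ are borders connected by a path in $\partial\mathcal{E}(C)$ inside the interior of $C$, and then read off a Sarkisov link at each ridge via \autoref{islinked}. Your extra step of enlarging $V$ to span $NS(X)$ is unnecessary, since the construction in \autoref{geo} already includes ample divisors $H$ whose components span $NS(X)$ and already produces a geography of dimension (at most) $2$; otherwise your bookkeeping matches the paper's argument.
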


\begin{proof}
	
	By assumption these Mori fibre spaces are outputs of an MMP for some pair klt $(X,\Delta)/T$. Replacing $X$ with a suitable resolution, we may suppose that $X$ is smooth and admits morphisms $f_{i}:X \to Y_{i}$. Let $h_{i}=g_{i} \circ f_{i}$ then by Lemma $\ref{geo}$ there is a geography for $(X,\Delta)$ of dimension $2$ such that the $\mathcal{D}_{f_{i}}(C)$ are countries and the interiors of the $\mathcal{D}_{h_{i}}$ are connected by a path along the boundary of $\mathcal{E}(C)$. 
	
	Each ridge in this path corresponds to a Sarkisov link by \autoref{islinked}. Thus following the path gives a (non-unique) decomposition of $f_{2} \circ f_{1}^{-1} \colon Y_{1} \dashrightarrow Y_{2}$ into Sarkisov links. Since $\mathcal{E}(C)$ is a rational polytope, there are finitely many links.

\end{proof}
	
	\bibliography{Refs}

\newcommand{\etalchar}[1]{$^{#1}$}
\begin{thebibliography}{BCHM10}

\bibitem[BBS21]{bernasconi2021abundance}
Fabio Bernasconi, Iacopo Brivio, and Liam Stigant.
\newblock Abundance theorem for threefolds in mixed characteristic.
\newblock {\em arXiv preprint arXiv:2111.08970}, 2021.

\bibitem[BCHM10]{birkar2010existence}
Caucher Birkar, Paolo Cascini, Christopher~D Hacon, and James McKernan.
\newblock Existence of minimal models for varieties of log general type.
\newblock {\em Journal of the American Mathematical Society}, 23(2):405--468,
  2010.

\bibitem[BMP{\etalchar{+}}20]{bhatt2020globally+}
Bhargav Bhatt, Linquan Ma, Zsolt Patakfalvi, Karl Schwede, Kevin Tucker, Joe
  Waldron, and Jakub Witaszek.
\newblock Globally+-regular varieties and the minimal model program for
  threefolds in mixed characteristic.
\newblock {\em arXiv preprint arXiv:2012.15801}, 2020.

\bibitem[Das20]{das2020finiteness}
Omprokash Das.
\newblock Finiteness of log minimal models and nef curves on 3-folds in
  characteristic $$ p> 5$$.
\newblock {\em Nagoya Mathematical Journal}, 239:76--109, 2020.

\bibitem[DW19]{DW19}
Omprokash Das and Joe Waldron.
\newblock On the abundance problem for 3-folds in characteristic {$p>5$}.
\newblock {\em Math. Z.}, 292(3-4):937--946, 2019.

\bibitem[HM09]{hacon2009sarkisov}
Christopher~D Hacon and James McKernan.
\newblock The sarkisov program.
\newblock {\em arXiv preprint arXiv:0905.0946}, 2009.

\bibitem[Kaw08]{kawamata2008flops}
Yujiro Kawamata.
\newblock Flops connect minimal models.
\newblock {\em Publications of the Research Institute for Mathematical
  Sciences}, 44(2):419--423, 2008.

\bibitem[KM98]{KM}
J\'{a}nos Koll\'{a}r and Shigefumi Mori.
\newblock {\em Birational geometry of algebraic varieties}, volume 134 of {\em
  Cambridge Tracts in Mathematics}.
\newblock Cambridge University Press, Cambridge, 1998.
\newblock With the collaboration of C. H. Clemens and A. Corti, Translated from
  the 1998 Japanese original.

\bibitem[Kol13]{kk-singbook}
J\'{a}nos Koll\'{a}r.
\newblock {\em Singularities of the minimal model program}, volume 200 of {\em
  Cambridge Tracts in Mathematics}.
\newblock Cambridge University Press, Cambridge, 2013.
\newblock With a collaboration of S\'{a}ndor Kov\'{a}cs.

\bibitem[KU85]{katsura1985elliptic}
Toshiyuki Katsura and Kenji Ueno.
\newblock On elliptic surfaces in characteristic p.
\newblock {\em Mathematische Annalen}, 272(3):291--330, 1985.

\bibitem[Mum61]{mumford1961topology}
David Mumford.
\newblock The topology of normal singularities of an algebraic surface and a
  criterion for simplicity.
\newblock {\em Publications Math{\'e}matiques de l'Institut des Hautes
  {\'E}tudes Scientifiques}, 9(1):5--22, 1961.

\bibitem[SC11]{shokurov2011geography}
Vyacheslav Shokurov and Sung Choi.
\newblock Geography of log models: theory and applications.
\newblock {\em Open Mathematics}, 9(3):489--534, 2011.

\bibitem[Sho86]{shokurov1986nonvanishing}
Vyacheslav~Vladimirovich Shokurov.
\newblock The nonvanishing theorem.
\newblock {\em Mathematics of the USSR-Izvestiya}, 26(3):591, 1986.

\bibitem[{Tan}18]{tanaka2018minimal}
Hiromu {Tanaka}.
\newblock Minimal model program for excellent surfaces.
\newblock {\em Ann. Inst. Fourier (Grenoble)}, 68(1):345--376, 2018.

\end{thebibliography}
	\bibliographystyle{alpha}
\end{document}